\numberwithin{equation}{section}
\def\@cite#1#2{{\m@th\upshape\bfseries%
[{#1\if@tempswa{\m@th\upshape\mdseries, #2}\fi}]}}
\theoremstyle{plain}
\newtheorem{theorem}{Theorem}[section]
\newtheorem{proposition}[theorem]{Proposition}
\newtheorem{lemma}[theorem]{Lemma}
\theoremstyle{definition}
\newtheorem{definition}[theorem]{Definition}
\newtheorem{remark}[theorem]{Remark}
\newtheorem*{acknow}{Acknowledgements}
\theoremstyle{remark}
  \newcommand{\A}{{\mathcal{A}}}
  \newcommand{\B}{{\mathcal{B}}}
  \newcommand{\F}{{\mathcal{F}}}
\renewcommand{\H}{{\mathcal{H}}}
  \newcommand{\I}{{\mathcal{I}}}
  \newcommand{\J}{{\mathcal{J}}}
  \newcommand{\K}{{\mathcal{K}}}
\renewcommand{\L}{{\mathcal{L}}}
  \newcommand{\N}{{\mathcal{N}}}
\renewcommand{\O}{{\mathcal{O}}}
\renewcommand{\P}{{\mathcal{P}}}
  \newcommand{\Q}{{\mathcal{Q}}}
\renewcommand{\S}{{\mathcal{S}}}
  \newcommand{\T}{{\mathcal{T}}}
\newcommand{\eps}{\varepsilon}
\def\al{\alpha}
\def\be{\beta}
\def\ga{\gamma}
\def\de{\delta}
\def\ze{\zeta}
\def\ka{\kappa}
\def\la{\lambda}
\def\La{\Lambda}
\def\om{\omega}
\def\si{\sigma}
\newcommand\vphi{\varphi}
\newcommand{\bC}{\mathbb{C}}
\newcommand{\bF}{\mathbb{F}}
\newcommand{\bN}{\mathbb{N}}
\newcommand{\bT}{\mathbb{T}}
\newcommand{\bZ}{\mathbb{Z}}
\newcommand{\bR}{\mathbb{R}}
\newcommand{\fI}{{\mathfrak{I}}}
\newcommand{\Bi}{{\mathbf{i}}}
\newcommand{\FOR}{\text{ for }}
\newcommand{\foral}{\text{ for all }}
\newcommand{\qand}{\quad\text{and}\quad}
\newcommand{\qif}{\quad\text{if}\quad}
\newcommand{\qiff}{\quad\text{if and only if}\quad}
\newcommand{\qfor}{\quad\text{for}\; }
\newcommand{\qforal}{\quad\text{for all}\quad}
\newcommand{\ca}{\mathrm{C}^*}
\newcommand{\ol}{\overline}
\newcommand{\wt}{\widetilde}
\newcommand{\ad}{\operatorname{ad}}
\newcommand{\Aut}{\operatorname{Aut}}
\newcommand{\Avt}{\operatorname{AVT}}
\newcommand{\End}{\operatorname{End}}
\newcommand{\Eq}{\operatorname{E}}
\newcommand{\fty}{\operatorname{fin}}
\newcommand{\GEq}{\operatorname{G-E}}
\newcommand{\id}{{\operatorname{id}}}
\newcommand{\im}{{\operatorname{Im}}}
\newcommand{\mt}{\emptyset}
\newcommand{\spn}{\operatorname{span}}
\newcommand{\sumoplus}{\operatornamewithlimits{\sum \strut^\oplus}}
\newcommand{\supp}{\operatorname{supp}}
\newcommand{\Tr}{\operatorname{T}}
\newcommand{\sca}[1]{\langle#1\rangle} 
\newcommand{\nor}[1]{\Vert #1\Vert} 
\newcommand{\bo}[1]{\mathbf{#1}} 
\newcommand{\un}[1]{{\underline{#1}}} 
\newcommand{\umu}{\underline\mu}
\newcommand{\unu}{\underline\nu}
\newcommand{\fdn}{(\mathbb{F}_+^d)^N}
\begin{document}

\title[Equilibrium states and entropy theory for Nica-Pimsner algebras]{Equilibrium states and entropy theory for \\ Nica-Pimsner algebras}

\author[E.T.A. Kakariadis]{Evgenios T.A. Kakariadis}
\address{School of Mathematics, Statistics and Physics\\ Newcastle University\\ Newcastle upon Tyne\\ NE1 7RU\\ UK}
\email{evgenios.kakariadis@ncl.ac.uk}

\thanks{2010 {\it  Mathematics Subject Classification.} 46L30, 46L55, 46L08, 58B34.}

\thanks{{\it Key words and phrases:} equilibrium states, product systems, Nica-Pimsner algebras.}

\begin{abstract}
We study the equilibrium simplex of Nica-Pimsner algebras arising from product systems of finite rank on the free abelian semigroup.
First we show that every equilibrium state has a convex decomposition into parts parametrized by ideals on the unit hypercube.
Secondly we associate every gauge-invariant part to a sub-simplex of tracial states of the diagonal algebra.
We show how this parametrization lifts to the full equilibrium simplices of non-infinite type.

The finite rank entails an entropy theory for identifying the two critical inverse temperatures: (a) the least upper bound for existence of non finite-type equilibrium states, and (b) the least positive inverse temperature below which there are no equilibrium states at all.
We show that the first one can be at most the strong entropy of the product system whereas the second is the infimum of the tracial entropies (modulo negative values).
Thus phase transitions can happen only in-between these two critical points and possibly at zero temperature.
\end{abstract}

\maketitle

\section{Introduction}

Kubo-Martin-Schwinger states encapture the properties of a quantum system at thermal equilibrium.
Taking motivation from the grand canonical form of Gibbs states, by now there is a well established theory of KMS-states for C*-dynamical systems over $\bR$.
Equilibrium states form an effective tool for $\bR$-equivariant isomorphisms and the community has focused on their parametrization.
It has become evident that they are tractable in particular for algebras arising from Fock type constructions.
Complete results have been established for $\bZ_+$-systems and algebras of semigroups of particular type, while steps have been taken forward in specific multivariable contexts.
In these cases the goal is to attain a full decomposition and parametrization of the equilibrium $\Eq$-simplex.
Surprisingly however much less is known for $\bZ_+^N$, even when finite unit decompositions are available.
The overarching aim of this paper is to tackle this class.
Namely, we establish the multivariable Wold decomposition and parametrization combined with (several notions of) entropy for Nica-Pimsner algebras of finite rank $\bZ_+^N$-product systems.

\subsection{Background}

In the early 2000s Laca-Neshveyev \cite{LacNes04} identified the scaling property that characterizes the $\Eq$-simplex for a large class of C*-algebras, namely the Pimsner algebras.
Introduced in \cite{Pim97} and finessed by Katsura \cite{Kat04}, Pimsner algebras are naturally constructed from a single Hilbert bimodule $X$ over a C*-algebra $A$, better known as a C*-correspondence.
They encompass a great variety of constructs, e.g., dynamical systems, transfer operators, graphs, and there are two main variants.
The Toeplitz-Pimsner algebra $\T(X)$ is generated by the left creation operators, while the Cuntz-Pimsner algebra $\O(X)$ is a quotient by an appropriate sub-ideal of the compacts $\K(\F X)$, the smallest possible so that $\O(X)$ attains an isometric copy of $X$ and $A$.
One of the breakthrough findings in \cite{LacNes04} is that the simplex of equilibrium states  $\Eq_\be(\T(X))$ at inverse temperature $\be$ has a rather rich structure that stabilizes after a critical temperature $\be_c$.
Moreover, every equilibrium state of $\T(X)$ has a unique convex decomposition in a finite part (arising from $\K(\F X))$ and in an infinite part (inducing on $\T(X)/ \K(\F X)$).
The latter quotient is not always the anticipated $\O(X)$, but they do coincide when $X$ is regular.

Following the work of Bost-Connes \cite{BosCon95}, Laca-Raeburn \cite{LacRae10} applied the framework of \cite{LacNes04} to study C*-algebras of the $(ax+b)$-semigroup.
In the process Laca-Raeburn \cite{LacRae10} formalized an algorithm for parametrizing equilibrium states of finite type that has been widely applicable.
A great volume of subsequent works for specific one-variable constructs emerged from \cite{LacRae10}.
One of the common characteristics is the identification of two critical inverse temperatures $\be_c'$ and $\be_c$: there are no equlibrium states below $\be_c'$ while it is just the finite part that survives above $\be_c$.
Hence phase transitions can only happen in-between $\be_c'$ and $\be_c$.
A second type of phase transition may occur at $\be = \infty$ between ground states and KMS${}_\infty$-states.

Although the C*-algebra $\T(\bN \rtimes \bN^\times)$ of \cite{LacRae10} does not fit in the Pimsner class, it accommodates several of the arguments of \cite{LacNes04}.
As shown by Brownlowe-an Huef-Laca-Raeburn \cite{BHLR12} it is a Toeplitz-Nica-Pimsner algebra in the sense of Fowler \cite{Fow02}.
In this case the product system $X = \{X_p\}_{p \in P}$ is a family of C*-correspondences (along with multiplication rules) over a quasi-lattice group $(G,P)$.
In addition to Fowler's Toeplitz-Nica-Pimsner algebra $\N\T(X)$ of the Fock representation, Carlsen-Larsen-Sims-Vittadello \cite{CLSV11} proved the existence of a Cuntz-Nica-Pimsner algebra $\N\O(X)$ as co-universal with respect to the Gauge-Invariant-Uniqueness-Theorem, for rather general pairs $(G,P)$.

Nica-Pimsner algebras encode in turn a variety of semigroup transformations.
Consequently the community envisaged a program that builds on \cite{LacNes04, LacRae10} and aspires to understand the $\Eq$-structure for Nica-Pimsner algebras and their quotients.
There is a number of worked out cases that shapes the framework in the following sense:

\vspace{2pt}

(Q.1) What is a canonical Wold decomposition for $(G,P)$?

\vspace{2pt}

(Q.2) How does it characterize the types of the equilibrium states?

\vspace{2pt}

(Q.3) How does the product system data affect the critical temperatures?

\vspace{2pt}
\noindent
Results in this direction can then be used for rigidity or classification purposes.
For example equilibrium states have been used recently for reconstruction of graphs \cite{CM12}.

Let us review some of the obtained parametrizations.
Brownlowe-an Huef-Laca-Raeburn \cite{BHLR12} studied the $\Eq$-structure for quotients of $\T(\bN \rtimes \bN^\times)$ of \cite{LacRae10} in conjuction with previous work of Larsen \cite{Lar10} on multivariable transfer operators.
These constructs of product systems attain orthonormal bases and Hong-Larsen-Szymanski \cite{HLS12a, HLS12b} also studied equilibrium states in such generality.
Under some extra conditions they identified a finite-type only structure at suitably high inverse temperatures.
However the critical points $\be_c'$ and $\be_c$, or states of non-finite type are not computed in \cite{HLS12b}.
Afsar-Brownlowe-Larsen-Stammeier \cite{ALS18} recently extended this by identifying the equilibrium simplex for the Toeplitz-type C*-algebras of right LCM semigroups, with a description for the critical interval.
In the particular case where the product system comes from a $\bZ_+^N$-dynamical system it has been shown in \cite{Kak14} that $\be_c' = \be_c = \log N$ and the states are either finite or infinite.
Moving beyond orthonormal bases, Afsar-an Huef-Raeburn \cite{AHR18} considered product systems arising from local homeomorphisms.
An upper bound for $\be_c$ is given but it is not shown if it is optimal.
Considerably more has been achieved for higher-rank graphs that admit orthogonal but not orthonormal bases.
If the graph is irreducible then it is shown in \cite{HLRS14} that $\be_c'$ and $\be_c$ coincide with the logarithm of the common Perron-Frobenius eigenvalue.
Fletcher-an Huef-Raeburn \cite{FHR18} later formalized an algorithm for computing the $\Eq$-structure of their Toeplitz algebras.
However the lowest inverse temperature $\be_c'$ is not computed.
Very recently Christensen \cite{Chr18} answered questions (Q.1) and (Q.2) and provided the full parametrization for higher rank graphs without any of the assumptions of \cite{FHR18}.
His independent approach fits exactly the framework we have been developing for the current paper.

There are two additional works of the author that inform our direction, in particular with respect to (Q.3).
In \cite{Kak17} we revisited the description of \cite{LacNes04} for finite rank C*-corres\-pondences.
By using several notions of entropy inspired by Pinzari-Watatani-Yonetani \cite{PWY00} we identified the critical values through the structural data of $X$.
Moreover we identified the $\Eq$-structure not just for $\T(X)$ but also for all relative Cuntz-Toeplitz algebras, including a full parametrization for $\O(X)$ itself (without assuming injectivity of $X$).
At the same time finite rank product systems are strong compactly aligned product systems as in \cite{DK18}.
We can thus use the analysis of \cite{DK18} for the realization of ideals of generalized compacts and of $\N\O(X)$.

\subsection{Main results}

The class of all product systems and quasi-lattices is extremely vast to hope to treat all cases in considerable depth in one stroke.
In this paper we wish to answer Questions (1)--(3) for finite rank product systems over $\bZ_+^N$, a class that encompasses a number of cases, e.g., \cite{AHR18, Chr18, FHR18, HKR17, HLRS14, HLRS15, Kak14, Lar10}.

The reader is advised to consult Section \ref{Ss:not} for the $\bZ_+^N$-notation we use.
Every $X_{\Bi}$ is of finite rank, i.e., it admits a family $\{x_{i,j} \mid j = 1, \dots, d_i\}$ of vectors in the unit ball such that
\begin{equation}
\xi_{\Bi} = \sum_{j=1}^{d_i} x_{i,j} \sca{x_{i,j}, \xi_{\Bi}} \foral \xi_{\Bi} \in X_\Bi.
\end{equation}
Thus so does every $X_{\un{n}}$ for the family $\{x_{\umu} \mid \ell(\umu) = \un{n}\}$, or equivalently $\K X_{\un{n}} = \L X_{\un{n}}$ for all $\un{n} \in \bZ_+^N$.
Note that we do not assume orthogonality of the $x_{i,j}$ and the decomposition may not be unique.
Such families are sometimes referred to as Parseval frames, e.g., \cite{HLS12b}, but here we will term $x = \{x_{i,j} \mid j=1, \dots, d_i, i=1, \dots, N\}$ as \emph{a unit decomposition of $X$}.

The Toeplitz-Nica-Pimsner algebra is the C*-algebra generated by the Fock left creation operators $\{\pi(a), t(\xi_{\un{n}}) \mid a \in A, \xi_{\un{n}} \in X_{\un{n}}, \un {n} \in \bZ_+^N\}$ on the full Fock space $\F X = \sum^{\oplus}_\un{m} X_{\un{m}}$.
The rotation along the $\bT^N$-gauge action corresponds to the multivariable Quantum Harmonic Oscillator as with the one-dimensional case, cf., \cite{Kak17}.
By passing to the unitization we may assume that $X$ is unital.
Since $X$ is of finite type, the projections
\begin{equation}
P_\Bi := \sum_{j=1}^{d_i} t(x_{i,j}) t(x_{i,j})^*
\qand
Q_F := \prod_{i \in F} (1 - P_\Bi)
\end{equation}
are in $\N\T(X)$ and commute with $\pi(A)$.
Suppose that $\{I_F \mid \mt \neq F \subseteq \{1, \dots, N\}\}$ is a lattice of $\perp$-invariant ideals in the sense that $I_F \subseteq I_{F'}$ when $F \subseteq F' \subseteq \{1, \dots, N\}$ and
\begin{equation}
\sca{X_{\un{n}}, I_F X_{\un{n}}} \subseteq I_F \foral \un{n} \perp F.
\end{equation}
Then we define $\N\O(I, X)$ be the quotient of $\N\T(X)$ by the ideal
\begin{equation}
\K_I := \sca{\pi(a) Q_F \mid a \in I_F, \mt \neq F \subseteq \{1, \dots, N\}}.
\end{equation}
By \cite{DK18} the Cuntz-Nica-Pimsner algebra $\N\O(X)$ corresponds to $\N\O(\I, X)$ for
\begin{equation}
\I_F := \{a \in \J_F \mid \sca{X_{\un{n}}, \J_F X_{\un{n}}} \subseteq \J_F \foral \un{n} \perp F\}
\; \text{ with } \;
\J_F := (\bigcap_{i \in F} \ker \phi_\Bi)^\perp.
\end{equation}
The Gauge-Invariant-Uniqueness-Theorem \cite{CLSV11} asserts that $\N\O(X)$ is the smallest Nica-Pimsner algebra that attains a faithful copy of $A$. 
In particular we define
\begin{equation}
\N\O(F, A, X) := \N\T(X) / \sca{Q_\Bi \mid i \in F}
\qand
\N\O(A,X) := \N\T(X) / \sca{Q_{\bo{1}}, \dots, Q_{\bo{N}}}.
\end{equation}
It follows that $\N\O(A,X)$ is $\N\O(X)$ when every $X_{\Bi}$ is injective.

For fixed $\be > 0$ the $F$-parts of the Wold decomposition of an equilibrium state relate to the $F$-projections
\begin{equation}
Q_F^{\un{n}} := \sum_{\ell(\umu) = \un{n}} t(x_{\umu}) Q_F t(x_{\umu})^*,
\end{equation}
so that $Q_{\{1, \dots, N\}}^{\un{n}} \equiv p_{\un{n}} \colon \F X \to X_{\un{n}}$.
For every $F \subseteq \{1, \dots, N\}$ we define
\begin{equation}
\Eq_\be^F(\N\T(X)) := \{\vphi \in \Eq_\be(\N\T(X)) \mid \sum_{\un{n} \in F} \vphi(Q_{F}^{\un{n}}) = 1 \text{ and } \vphi(Q_\Bi) = 0 \foral i \notin F\},
\end{equation}
with the understanding that for $F = \{1, \dots, N\}$ we write
\begin{equation}
\Eq_\be^{\{1, \dots, N\}}(\N\T(X)) \equiv 
\Eq_\be^{\fty}(\N\T(X)) := \{\vphi \in \Eq_\be(\N\T(X)) \mid \sum_{\un{n} \in \bZ_+^N} \vphi(p_{\un{n}}) = 1 \},
\end{equation}
and for $F = \mt$ we write
\begin{equation}
\Eq_\be^{\mt}(\N\T(X))
\equiv
\Eq_\be^\infty(\N\T(X)) := \{\vphi \in \Eq_\be(\N\T(X)) \mid \vphi(Q_\Bi) = 0 \foral i =1, \dots, N \}.
\end{equation}
We also consider the gauge invariant equilibrium states and we write
\begin{equation}
\GEq_\be^F(\N\T(X)) := \{\vphi \in \Eq_\be^F(\N\T(X)) \mid \vphi = \vphi E \},
\end{equation}
where $E \colon \N\T(X) \to \N\T(X)^\ga$ is the conditional expectation of the gauge action.
In Proposition \ref{P:gi} we show that any equilibrium state defines a gauge-invariant equilibrium state.
In particular we show that the finite-type equilibrium states have to be gauge-invariant.

Equivariance of the $Q_F^{\un{n}}$ allows to define $\Eq_\be^F(\N\O(I, X))$ in a similar manner.
Due to the KMS condition the infinite-type states are exactly the equilibrium states of $\N\O(A,X)$ and the finite-type equilibrium states correspond to unique extensions of states on $\K(\F X)$.
The $\Eq_\be^F$-states correspond to states of the ideal $\sca{Q_F}$ that annihilate $\sca{Q_{\Bi} \mid i \notin F}$.
Our first main result is that the $F$-parts are the building blocks of the $\Eq_\be$-simplex.

\begin{proof}[\bf Theorem A] 
\textit{(Theorem \ref{T:con}) Let $X$ be a product system of finite rank over $A$ and let $\be > 0$.
Then every $\vphi \in \Eq_\be(\N\T(X))$ admits a unique decomposition in a convex combination of $\vphi_F \in \Eq_\be^F(\N\T(X))$.
Moreover the $F$-part $\vphi_F$ is non-trivial if and only if $\vphi(Q_F) \neq 0$.
The same decomposition passes to the gauge-invariant equilibrium states and also to relative Cuntz-Nica-Pimsner algebras.}
\end{proof}

Our next step is to parametrize every $\GEq_\be^F(\N\T(X))$ by specific subsets of tracial states of $A$.
For every $\mt \neq F \subseteq \{1, \dots, N\}$ and $\tau \in \Tr(A)$ we define
\begin{equation}
c_{\tau, \be}^F := \sum \{ e^{- |\umu| \be} \tau(\sca{x_{\umu}, x_{\umu}}) \mid \ell(\umu) \in F \}.
\end{equation}
Then the associated $F$-set of tracial states of $A$ is given by
\begin{equation}
\Tr_{\be}^F(A)
:=
\{ \tau \in \Tr(A) \mid c_{\tau,\be}^F < \infty
\text{ and }
e^{\be}\tau(a) = \sum_{j=1}^{d_i} \sca{x_{i,j}, a x_{i,j}} \foral i \notin F\}.
\end{equation}
In particular for $F = \{1, \dots, N\}$ we write
\begin{equation}
\Tr_\be^{\fty}(A) := \{\tau \in \Tr(A) \mid c_{\tau, \be}^{\{1, \dots, N\}} = \sum_{\ell(\umu) \in \bZ_+^N} e^{- |\umu| \be} \tau(\sca{x_{\umu}, x_{\umu}}) < \infty \}.
\end{equation}
The case of $F = \mt$ is captured in the averaging traces, namely
\begin{equation}
\Avt_\be(A) := \{\tau \in \Tr(A) \mid e^{\be}\tau(a) = \sum_{j=1}^{d_i} \tau(\sca{x_{i,j}, a x_{i,j}}) \foral i = 1, \dots, N\}.
\end{equation}
Nevertheless these parts contain more states than what we want and we need to restrict further to traces that annihilate the kernel
\begin{equation}
\fI_{F^c} := \ker\{ A \to \N\O(F^c, A, X) \} = \{a \in A \mid \lim_{k} \vphi_{k \cdot \un{1}_{F^c}}(a) = 0 \}.
\end{equation}
When $X_\Bi$ is injective for every $i \in F^c$ then $\fI_{F^c} = (0)$. 
Passing further to states that annihilate $I_{F}$ we obtain the full $\Eq$-structure for $\N\O(I, X)$ at inverse temperature $\be >0$.
By setting $I_F = (0)$ for every $F$ gives the $\Eq$-structure of $\N\T(X)$.
Setting $I_F = \I_F$ for every $F$ we obtain the $\Eq$-structure for the important quotient $\N\O(X)$.

\begin{proof}[\bf Theorem B]
\textit{(Theorem \ref{T:para} and Theorem \ref{T:para 2}) 
Let $X$ be a product system of finite rank over $A$ and $\be > 0$.
Let $\{I_F \mid \mt \neq F \subseteq \{1, \dots, N\}\}$ be a lattice of $\perp$-invariant ideals of $A$.
Then: \\
$(1)$ The infinite-type simplex $\GEq_\be^{\infty}(\N\O(I,X))$ is weak*-homeomorphic onto 
\[
\{\tau \in \Avt_\be(A) \mid \tau|_{\fI_{\{1, \dots, N\}}} = 0\}.
\]
$(2)$ For every $F \neq \mt$ there is a continuous bijection from $\GEq_\be^{F}(\N\O(I, X))$ onto 
\[
\{ \tau \in \Tr_\be^F(A) \mid \tau|_{\fI_{F^c} + I_{F}} = 0 \}.
\]
$(3)$ These parametrizations respect convex combinations and thus the extreme points of the simplices.
}
\end{proof}

If $\Tr_\be^F(A)$ or $\GEq_\be^F(\N\O(I, X))$ is weak*-closed then the map in item (3) is a weak*-homeomorphism.
The proof is constructive and provides explicit formulas for these parametri\-zations.
To this end we use an induced product system $Z = \{Z_{\un{n}} \}_{\un{n} \in F}$ over $\B_{F^c}$ in the Fock space representation that is supported on $F$ and is given by
\begin{equation}\label{eq:z}
\B_{F^c} = \ol{\spn}\{t(X_{\un{k}}) t(X_{\un{w}})^* \mid \un{k}, \un{w} \perp F\}
\qand
Z_{\un{n}} = \ol{t(X_{\un{n}}) \B_{F^c}}
\qfor
\un{n} \in F.
\end{equation}
First we use a direct limit method to lift a $\tau \in \Tr_\be^F(A)$ to a KMS-state $\wt\tau$ on $\B_{F^c}$.
Then we use the finite-type statistical approximation on the GNS-representation of $\wt\tau$ to construct an equilibrium state on $\N\T(X)$ from $\sca{Q_F}$.
Finally we use the $\perp$-invariance of the $I_{F}$ to ensure this two-step process factorizes through $\N\O(I, X)$.
Gauge-invariance is used only to move from $\tau$ to $\wt\tau$.
Nevertheless with that given we can apply the same method to extend the parametrization to the entire $\Eq_\be^F(\N\T(X))$.

\begin{proof}[\bf Theorem C]
\textit{(Theorem \ref{T:para 3})
Let $X$ be a product system of finite rank over $A$ and $\be > 0$.
Then $\vphi \in \Eq_\be^{\infty}(\N\T(X))$ if and only if
\[
\vphi(f) = e^{-\be} \sum_{j=1}^{d_i} \vphi(t(x_{i,j})^* f t(x_{i,j})) 
\foral
f \in \N\T(X),
i=1, \dots, N.
\]
For $\mt \neq F \subseteq \{1, \dots, N\}$ the parametrization of Theorem \ref{T:para} lifts to a parametrization
\[
\{\wt\tau \in \Eq_\be(\B_{F^c}) \mid \wt\tau\pi \in \Tr_\be^F(A), \wt\tau|_{\fI_{F^c}'} = 0\}
\to 
\Eq_\be^F(\N\T(X))
\]
where
\[
\fI_{F^c}' := \ker\{ \B_{F^c} \to \N\O(F^c, A,X)\}.
\]
Likewise the parametrization of the relative Cuntz-Nica-Pimsner algebras from Theorem \ref{T:para 2} lifts to
\[
\{\wt\tau \in \Eq_\be(\B_{F^c}) \mid \wt\tau\pi \in \Tr_\be^F(A), \wt\tau|_{\fI_{F^c}' + I_F'} = 0\}
\to 
\Eq_\be^F(\N\O(I, X))
\]
where
\[
I_{F}' := \ol{\spn}\{t(X_{\un{m}}) \pi(I_F) t(X_{\un{w}})^* \mid \un{m}, \un{w} \perp F\}. \qedhere
\]
}
\end{proof}

In all results we consider the rotations to have the same (constant) rate.
In Remark \ref{R:ngi} we show that our methods extend to cover different weights as well.
When all weights are non-zero then this is just a scaling argument.
When some weights are zero then we just need to apply the general results to the Toeplitz-Nica-Pimsner algebra on the remaining fibers\footnote{\ It must be noted that this process is not class-preserving.
Nevertheless, Christensen \cite{Chr18} remarkably tackles the problem by remaining within the same class of higher rank graphs.}.

Next we turn our focus to the inverse temperatures $\be \geq 0$ for which equilibrium states exist.
The existence of a unit decomposition entails a notion of entropy; the interested reader may consult the monograph of Neshveyev-St\o rmer \cite{NesSto06} for a detailed discussion on the subject.
Such an idea is developed by Pinzari-Watatani-Yonetani \cite{PWY00} for Cuntz-Pimsner algebras of a single C*-correspondence.
Assuming the existence of both a left and a right unit decomposition they identify the critical minimal and maximal inverse temperatures of Cuntz-Pimsner algebras.
This is achieved by considering a Perron-Frobenius type theory for the induced completely positive maps which is applied to Cuntz-Krieger algebras.
Their notion of entropy is then compared to the Brown-Voiculescu topological entropy \cite{Voi95, Bro99} of the arising subshifts, which in turn finds its roots in the non-commutative entropy of Connes-St\o rmer \cite{CS75}.
Recently it has been discovered by an Huef-Laca-Raeburn-Sims \cite{HLRS15b} that not just the entropy of the adjacency matrix but also the entropies of the sink subgraphs are necessary to identify the phase transitions for the Toeplitz-Cuntz-Krieger algebras, and therefore of the Cuntz-Krieger algebras by descending appropriately; see also \cite{Kak17}.
The key idea is that the convergence of the statistical approximation over a state depends on the irreducible components it visits forwards.
This route also removes the assumption of a right unit decomposition from \cite{PWY00}.

In order to define the multivariable analogue in our context we need to consider all possible $F$-statistical approximations for the $\Eq_\be^F$-parts, and then discover their relations.
For every $\tau \in \Tr(A)$ we define the \emph{tracial entropy}
\begin{equation}
h_X^\tau := \limsup_k \frac{1}{k} \log \bigg[ \sum_{|\umu| = k} \tau(\sca{x_{\umu}, x_{\umu}}) \bigg],
\end{equation}
and for $F \subseteq \{1, \dots, N\}$ we define the \emph{$F$-tracial entropy}
\begin{equation}
h_X^{\tau, F} := \limsup_k \frac{1}{k} \log \bigg[ \sum_{|\umu| = k, \ell(\umu) \in F} \tau(\sca{x_{\umu}, x_{\umu}}) \bigg].
\end{equation}
We define the \emph{entropy} of a unit decomposition $x = \{x_{i,j} \mid j = 1, \dots, d_i, i=1, \dots, N\}$ by
\begin{equation}
h_X^x := \limsup_k \frac{1}{k} \log \| \sum_{|\umu| = k} \sca{x_{\umu}, x_{\umu}}\|_{A},
\end{equation}
and then the \emph{strong entropy} of $X$ by
\begin{equation}
h_X^s := \inf\{ h_X^x \mid \text{ $x$ is a unit decomposition of $X$}\}.
\end{equation}
Likewise for $F \subseteq \{1, \dots, N\}$ we define the \emph{$F$-entropy} of a unit decomposition $x$ by
\begin{equation}
h_X^{x, F} := \limsup_k \frac{1}{k} \log \| \sum_{|\umu| = k, \ell(\umu) \in F} \sca{x_{\umu}, x_{\umu}}\|_{A},
\end{equation}
and the \emph{$F$-strong entropy} of $X$ by
\begin{equation}
h_X^{s, F} := \inf\{ h_X^{x,F} \mid \text{ $x$ is a unit decomposition of $X$}\}.
\end{equation}
Moreover we define the \emph{entropy} of $X$ by
\begin{equation}
h_X := \inf\{\be >0 \mid \Eq_\be(\N\T(X)) \neq \mt \}.
\end{equation}
Due to our parametrization we are able to deduce $h_X$ from the tracial entropies.
This is quite pleasing as for the first time we are able to recognize the critical inverse temperatures for $\bZ_+^N$-product systems.

\begin{proof}[\bf Theorem D]
\textit{(Proposition \ref{P:entropy1}, Proposition \ref{P:entropy2}, Theorem \ref{T:entropy} and Proposition \ref{P:strong entropy}) 
Let $X$ be a product system of finite rank over $A$. \\
$(1)$ If $\tau \in \Tr(A)$ then
\[
h_X^\tau \leq h_X^s = \max \{ h_X^{s, i} \mid i =1, \dots, N \} \leq \max \{ \log d_i \mid i =1, \dots, N \}.
\]
$(2)$ If $\tau \in \Tr_\be^F(A)$ for some $F \subseteq \{1, \dots, N\}$ and $\be > 0$ then
\[
h_X^{\tau, F} \leq h_X^{\tau} \leq \be.
\]
$(3)$ The entropy of $X$ is the infimum of the tracial entropies modulo negative values, i.e., 
\[
h_X 
 = \max\{0, \inf \{h_X^{\tau} \mid \tau \in \Tr(A)\} \}.
\]
$(4)$ If in addition $h_X > 0$ then there exists a $\tau \in \Tr(A)$ such that $h_X = h_\tau$.\\
$(5)$ If $\be > h_X^{s,F}$ then $\Eq_\be^C(\N\T(X)) = \mt$ for all $C \subsetneq F$.
In particular, if $\be > h_X^s$ then $\Eq_\be(\N\T(X)) = \Eq_\be^{\fty}(\N\T(X)) \simeq \Tr(A)$.
}
\end{proof}

The strong entropy can be the greatest lower bound for the non-finite parts depending on the structure of the product system.
For example $h_X^s = \log \la$ for the Perron-Frobenius eigenvalue of a higher-rank graph \cite{HLRS14}.
Proposition \ref{P:strong entropy} also yields existence of KMS${}_\infty$-states as limits of finite-type states.
Thus another phase transition may occur at $\be = \infty$.

\begin{proof}[\bf Theorem E]
\emph{(Theorem \ref{T:ground})
Let $X$ be a product system of finite rank over $A$.
Then there exists an affine weak*-homeomoprhism $\Psi$ between the states $\tau \in \S(A)$ and the ground states of $\N\T(X)$ such that
\begin{equation*} 
\Psi_\tau(\pi(a)) = \tau(a) \foral a \in A
\qand
\Psi_\tau(t(\xi_{\un{n}}) t(\eta_{\un{m}})^*) = 0 \text{ when } \un{n} + \un{m} \neq \un{0}.
\end{equation*}
The restriction of $\Psi$ to the tracial states $\Tr(A)$ induces a weak*-homeomorphism onto the KMS${}_\infty$-states of $\N\T(X)$.}

\emph{If $\{I_F \mid \mt \neq F \subseteq \{1, \dots, N\}\}$ is a lattice of $\perp$-invariant ideals of $A$ then the corresponding weak*-homeomorphisms for $\N\O(I, X)$ arise by restricting on states that annihilate the ideal $I_{\{1, \dots, N\}}$.}
\end{proof}

We close with some examples to show how our theory covers some known results.
Apart from the $\bZ_+^N$-dynamics of \cite{Kak14} and irreducible higher rank graphs of \cite{HLRS14} we apply the entropy theory to product systems of multivariable factorial languages from \cite{DK18}.

\subsection{Organization of paper}

In Section \ref{S:pre} we fix notation and set up the terminology for Nica-Pimsner algebras of finite rank product systems.
In Section \ref{S:wold} we provide the convex decomposition of an equilibrium state on $\N\T(X)$ in its $F$-parts.
Then we proceed in Section \ref{S:para} to the parametrization of the $F$-simplices.
In Section \ref{S:rel} we provide the full decomposition/parametrization theorem for Nica-Pimsner algebras by using factorization through the canonical quotient map.
In Section \ref{S:entropy} we make the connection with the entropies: we show how they affect the $\Eq$-structure and we give the parametrization of the limit states.
The examples are presented in Section \ref{S:examples}.

\section{Preliminaries}\label{S:pre}

\subsection{Notation}\label{Ss:not}

The free generators of $\bZ^N$ for $N < \infty$ will be denoted by $\bo{1}, \dots, \bo{N}$.
We write
\[
|\un{n}| \equiv |\sum \{ n_i \Bi \mid i \in \{1, \dots, N\} \}| := \sum_{i =1}^N n_i
\]
for the \emph{length} of $\un{n} \in \bZ_+^N$.
We fix $\un{1} := (1, \dots, 1)$.
For $\mt \neq F \subseteq \{1, \dots, N\}$ and $\un{n} \in \bZ_+^N$ we write
\[
\un{n}_F := \sum_{i \in F} n_i \cdot \Bi.
\]
In this sense we write $\un{1}_F = \sum \{ \Bi \mid i \in F \}$.
We consider the usual lattice structure on $\bZ_+^N$.
We denote \emph{the support of $\un{n}$} by 
\[
\supp \un{n} := \{i \in \{1, \dots, N\} \mid n_i \neq 0\}
\]
and we write
\[
\un{n} \perp \un{m} \qiff \supp \un{n} \bigcap \supp \un{m} = \mt.
\]
Thus $\un{n} \perp F$ means that $\supp \un{n} \bigcap F = \mt$.
For simplicity we write
\[
\un{n} \in F \qiff \supp{\un{n}} \subseteq F.
\]
We will be making use of the alternating sums, i.e., for $F \subseteq \{1, \dots, N\}$ and $\{f_1, \dots, f_N\}$ commuting elements we have
\[
\prod_{i \in F} (1- f_i)
=
\sum \{(-1)^{|C|}  \prod_{i \in C} f_i \mid C \subseteq F \} = 0.
\]
We will consider multivariable words on different sets of symbols.
Let $\{d_1, \dots, d_N\}$ be a set of finite positive integers.
We write
\[
\umu = (\mu_1, \dots, \mu_N) \in \bF_+^{d_1} \times \cdots \times \bF_+^{d_N}
\]
for the tuple of $N$ words where each $\mu_i$ is a word over the symbol set $\{1, \dots, d_i\}$.
If $|\mu_i|$ denotes the length of each word then we define the multi-length $\ell(\umu)$ and the length $|\umu|$ by
\[
\ell(\umu) := (|\mu_1|, \dots, |\mu_N|) 
\qand 
|\umu| := |\ell(\umu)| = \sum_{i=1}^N |\mu_i|.
\]
We will be considering limits over $\bZ_+^N$ with the understanding of convergence over the directed family of finite sets in $\bZ_+^N$.
Therefore for a non-negative sequence $(a_{\un{n}})_{\un{n} \in \bZ_+^N}$ we have
\[
\lim_{\un{n} \in \bZ_+^N} a_{\un{n}}
=
\lim_{|\un{n}| = k \to \infty} a_{\un{n}}
=
\lim_{\un{n} \leq k \cdot \un{1}, k \to \infty} a_{\un{n}}.
\]

\subsection{C*-correspondences}

The reader should be well acquainted with the general theory of Hilbert modules and C*-correspondences.
For example, one may consult \cite{Kat04} and \cite{Lan95} which we follow for terminology.
Here we just wish to fix notation.

A \emph{C*-correspondence} $X$ over $A$ is a right Hilbert module over $A$ with a left action given by a $*$-homomorphism $\phi_X \colon A \to \L X$.
We write $\L X$ and $\K X$ for the adjointable operators and the compact operators of $X$, respectively.
We will write $a\xi$ for $\phi_X(a)\xi$ when it is clear from the context which left action we use.
The ``rank one compact operators'' $\zeta \mapsto \xi \sca{\eta, \zeta}$ are denoted by $\theta^X_{\xi, \eta}$.

For two C*-corresponden\-ces $X, Y$ over the same $A$ we write $X \otimes_A Y$ for the stabilized tensor product over $A$.
Moreover we say that $X$ is unitarily equivalent to $Y$ if there is a surjective adjointable $U \in \L(X,Y)$. 

A \emph{representation} $(\rho,v)$ of a C*-correspondence is a left module map that preserves the inner product.
Then $(\rho,v)$ is automatically a bimodule map.
Moreover there exists a $*$-homomorphism $\psi$ on $\K X$ such that $\psi(\theta^X_{\xi, \eta}) = v(\xi) v(\eta)^*$.
If $\rho$ is injective then so is $\psi$. 

\subsection{Product systems}

Fix a set $\{X_{\Bi} \mid i \in \{1, \dots, N\}\}$ of C*-correspondences over $A$, one for each generator of $\bZ_+^N$.
A \emph{product system $X$} is a family $\{X_{\un{n}} \mid \un{n} \in \bZ_+^N\}$ of C*-correspondences over $A$ such that 
\[
X_{\un{0}} = A \qand 
X_{\bo{i}_1}^{\otimes n_1} \otimes_A \cdots \otimes_A X_{\bo{i}_k}^{\otimes n_k} \simeq X_{\un{n}} \text{ whenever } \un{n} = \sum n_j \bo{i}_j \text{ and } n_1 \neq 0.
\]
We require $n_1 \neq 0$ so that these equivalences do not force non-degeneracy of the fibers.
Consequently $X$ comes with a family of product rules in the form of unitary equivalences
\[
u_{\un{n}, \un{m}} \colon X_{\un{n}} \otimes_A X_{\un{m}} \to X_{\un{n} + \un{m}}.
\]
We will suppress the use of the $u_{\un{n}, \un{m}}$ as much as possible by writing $\xi_{\un{n}} \xi_{\un{m}} \in X_{\un{n} + \un{m}}$ for the element $u_{\un{n}, \un{m}}(\xi_{\un{n}} \otimes \xi_{\un{m}})$.
Along with the system we have some canonical operations that respect these equivalences.
To this end we define the maps
\[
i_{\un{n}}^{\un{n} + \un{m}} \colon \L X_{\un{n}} \to \L X_{\un{n} + \un{m}}
\; \textup{ such that } \;
i_{\un{n}}^{\un{n} + \un{m}}(S) = u_{\un{n}, \un{m}}(S \otimes \id_{X_{\un{m}}}) u^{*}_{\un{n}, \un{m}}.
\]
It is clear that $i^{\un{n} + \un{m} + \un{r}}_{\un{n} + \un{m}} \, i^{\un{n} + \un{m}}_{\un{n}} = i^{\un{n} + \un{m} + \un{r}}_{\un{n}}$ and thus $i^{\un{n} + \un{m}}_{\un{n}}(\phi_{\un{n}}(a)) = \phi_{\un{n} + \un{m}}(a)$.
Following Fowler's work \cite{Fow02}, a product system is called \emph{compactly aligned} if it has the property:
\[
i_{\un{n}}^{\un{n} \vee \un{m}}(S) i_{\un{m}}^{\un{n} \vee \un{m}}(T) \in \K X_{\un{n} \vee \un{m}} \text{ whenever } S \in \K X_{\un{n}}, T \in \K X_{\un{m}}.
\]
If a compactly aligned product system satisfies $i_{\un{n}}^{\un{n} + \Bi}(\K X_{\un{n}}) \subseteq \K X_{\un{n} + \Bi}$ whenever $\un{n} \perp \Bi$ then it is called \emph{strong compactly aligned} \cite{DK18}.

\subsection{Finite rank} \label{Ss:finite rank}

A product system will be said to have finite rank $\{d_1, \dots, d_N\}$ if for every $i = 1, \dots, N$ there exists a family of vectors $\{x_{i,j} \mid j = 1, \dots, d_i\}$ such that
\[
1_{X_{\Bi}} = \sum_{j=1}^{d_i} \theta^{X_\Bi}_{x_{i,j}, x_{i,j}}.
\] 
We will say that a family $x = \{x_{i,j} \mid j=1, \dots, d_i, i=1, \dots, N\}$ is a unit decomposition for the C*-correspondence.
If $\mu_i = j_1 \cdots j_r$ is a word on the symbols $\{1, \dots, d_i\}$ then we write
\[
x_{i, \mu_i} = x_{i, j_1} \otimes \cdots \otimes x_{i, j_r}.
\]
We may extend this notation to the entire $\bZ_+^N$.
Let $\umu = (\mu_1, \dots, \mu_N)$ be a multivariable word such that each $\mu_i$ is a word on the symbols $\{1, \dots, d_i\}$.
If $\mu_{i_1}, \dots, \mu_{i_k} \neq \mt$ for $i_1 < \dots < i_k$ then we write
\[
x_{\umu} = x_{i_1, \mu_{i_1}} \cdots x_{i_k, \mu_{i_k}} \in X_{\ell(\umu)}.
\]
Then the family $\{x_{\umu} \mid \ell(\umu) = \un{n} \}$ is a unit decomposition for $X_{\un{n}}$.
Notice here that a re-ordering of the $x_{i, \mu_i}$ in $x_{\umu}$ gives another unit decomposition (of the same cardinality).
We have that $X$ is of finite rank if and only if $\K X_{\un{n}} = \L X_{\un{n}}$ for all $\un{n} \in \bZ_+^N$, and thus a product system of finite rank is automatically strong compactly aligned.

\subsection{Representations} \label{Ss:CNP-relations}

A \emph{Nica-covariant representation $(\rho, v)$} of a product system $X = \{X_{\un{n}} \mid \un{n} \in \bZ_+^N \}$ consists of a family of representations $(\rho, v_{\un{n}})$ of $X_{\un{n}}$ that satisfy the \emph{product rule}:
\[
v_{\un{n} + \un{m}}(\xi_{\un{n}} \xi_{\un{m}}) = v_{\un{n}}(\xi_{\un{n}}) v_{\un{m}}(\xi_{\un{m}})
\foral \xi_{\un{n}} \in X_{\un{n}}, \xi_{\un{m}} \in X_{\un{m}},
\]
and the \emph{Nica-covariance}:
\[
\psi_{\un{n}}(S) \psi_{\un{m}}(T) = \psi_{\un{n} \vee \un{m}} ( i_{\un{n}}^{\un{n} \vee \un{m}}(S) i_{\un{m}}^{\un{n} \vee \un{m}}(T)) \text{ whenever } S \in \K X_{\un{n}}, T \in \K X_{\un{m}}.
\]
We write $\rho \times v$ for the induced representation of a Nica-covariant pair $(\rho,v)$.
Henceforth we will suppress the use of the indices and write $v$ instead of $v_{\un{n}}$.

The \emph{Toeplitz-Nica-Pimsner} algebra $\N\T(X)$ is the universal C*-algebra generated by $A$ and $X$ with respect to the representations of $X$.
The Fock space provides an essential example of an injective Nica-covariant representation for $\N\T(X)$.
In short, let $\F(X) = \sumoplus \{ X_{\un{m}} \mid \un{m} \in \bZ_+^N\}$.
For $a \in A$ and $\xi_{\un{n}} \in X_{\un{n}}$ define
\[
\pi(a) \eta_{\un{m}} = \phi_{\un{m}}(a) \eta_{\un{m}}
\qand
t(\xi_{\un{n}}) \eta_{\un{m}} = \xi_{\un{n}} \eta_{\un{m}}
\qforal
\eta_{\un{m}} \in X_{\un{m}}.
\]
Then $(\pi, t)$ is Nica-covariant and it is called the \emph{Fock representation} of $X$ \cite{Fow02}.
By taking the compression at the $(\un{0}, \un{0})$-entry we see that $\pi$, and thus $t$, is injective.

Given a Nica-covariant representation $(\rho, v)$ and $\un{m}, \un{m}' \in \bZ_+^N$ we define the \emph{cores} of the representation $(\rho, v)$ by
\[
\B_{[\un{m}, \un{m} + \un{m}']} := \spn\{ \psi_{\un{n}}(k_{\un{n}}) \mid k_\un{n} \in \K X_{\un{n}}, \un{m} \leq \un{n} \leq \un{m} + \un{m}'\}.
\]
These $*$-algebras are closed in $\ca(\rho, v)$, e.g., \cite[Lemma 36]{CLSV11}.
It follows from the work of Fowler \cite[Proposition 5.10]{Fow02} that if $(\rho, v)$ is a Nica-covariant representation of a compactly aligned product system $X$ then
\begin{equation}\label{eq:Fow}
v(X_{\un{m}})^* v(X_{\un{n}}) \subseteq \ol{v(X_{\un{n} - \un{n} \wedge \un{m}}) v(X_{\un{m} - \un{n} \wedge \un{m}})^*}.
\end{equation}
Therefore the cores are $\perp$-stable in the sense that
\[
t_{\un{n}}(X_{\un{n}})^* \cdot \B_{[\un{m}, \un{m} + \un{m}']} \cdot t_{\un{n}}(X_{\un{n}}) \subseteq \B_{[\un{m}, \un{m} + \un{m}']} 
\foral \un{n} \perp \un{m} + \un{m}'.
\]
A Nica-covariant representation $(\rho, v)$ \emph{admits a gauge action} if there is a point-norm continuous family of $*$-automorphisms $\{\ga_{\un{z}}\}_{\un{z} \in \bT^N}$ such that
\[
\ga_{\un{z}}(\rho(a)) = \rho(a) \foral a \in A
\qand
\ga_{\un{z}}(v(\xi_{\un{n}})) = \un{z}^{\un{n}} \, v(\xi_{\un{n}}) 
\foral \xi_{\un{n}} \in X_{\un{n}}.
\]
In this case $\B_{[\un{0}, \infty]} = \ol{\cup_{\un{n}} \B_{[\un{0}, \un{n}]}}$ is the fixed point algebra of $\ca(\rho, v)$.
By universality $\N\T(X)$ (as well as any of the covariant C*-algebras we will define below) admits a gauge action.

The appropriate Cuntz-analogue of a C*-algebra should attain an isometric copy of $A$ and be co-universal with respect to a Gauge-Invariant-Uniqueness-Theorem.
Its existence has been established for compactly aligned product systems in \cite{CLSV11}.
The form of the representations makes use of the augmented Fock space construction of \cite{SY11}.
To get the Cuntz-Nica-Pimsner algebra for strong compactly aligned we require a little bit less work \cite{DK18}.
For a finite $\mt \neq F \subseteq \{1, \dots, N\}$ we form the ideal 
\begin{align*}
\J_F
& :=
(\bigcap_{i \in F} \ker\phi_{\Bi})^\perp \cap (\bigcap\{ \phi_{\un{n}}^{-1}( \K X_{\un{n}}) \mid \un{n} \leq \un{1} \} ),
\end{align*}
with the understanding that $\phi_{\un{0}} = \id_A$.
In particular when $X$ is strong compactly aligned, we have that
\[
\bigcap\{ \phi_{\un{n}}^{-1}( \K X_{\un{n}}) \mid \un{n} \leq \un{1} \} 
= 
\bigcap\{ \phi_{\Bi}^{-1}( \K X_{\Bi}) \mid i \in \{1, \dots, N\} \}. 
\]
Furthermore we define the ideal
\[
\I_F:= \{a \in \J_F \mid \sca{X_{\un{n}}, a X_{\un{n}}} \subseteq \J_F \foral \un{n} \perp F\}.
\]
Hence $\I_F$ is the biggest ideal in $\J_F$ that remains invariant under the ``action'' of $F^\perp$.
A Nica-covariant representation $(\rho, v)$ of $X$ is called \emph{Cuntz-Nica-Pimsner} (or a \emph{CNP-repre\-sentation}) if it satisfies
\[
\sum \{ (-1)^{|\un{n}|} \psi_{\un{n}}(\phi_{\un{n}}(a)) \mid \un{n} \leq \un{1}_F \} = 0 \foral a \in \I_F,
\]
where $\psi_{\un{0}}(\phi_{\un{0}}(a)) = \rho(a)$.
The \emph{Cuntz-Nica-Pimsner algebra} $\N\O(X)$ is the universal C*-algebra with respect to the CNP-representations.
One of the main results of \cite{DK18} is that this is the $*$-algebraic description of Cuntz-Nica-Pimsner algebra, i.e., this $\N\O(X)$ coincides with the one in \cite{CLSV11}.
From now on we pass to product systems of finite rank.
Notice that in this case $\phi_{\un{n}}(A) \subseteq \K X_{\un{n}}$ for all $\un{n} \in \bZ_+^N$.

\begin{definition}\label{D:rel}
Let $X$ be a product system of finite rank over $A$ and fix $\mt \neq F \subseteq \{1, \dots, N\}$.
A representation $(\rho, v)$ will be called \emph{$F$-covariant} if
\[
\rho(a) = \psi_\Bi(\phi_\Bi(a)) \foral a \in A \text{ and } i \in F.
\]
The \emph{$F$-Cuntz-Nica-Pimsner algebra} $\N\O(F,A,X)$ is the universal C*-algebra with respect to the $F$-covariant representations.
\end{definition}

It follows that $\N\O(F,A,X) = \N\T(X)/ \sca{\pi(a)Q_\Bi \mid a \in A, i \in F}$.
To this end we will write
\begin{equation*}
\fI_F := \ker \left\{ A \to \N\T(X)/\sca{\pi(a)Q_{\Bi} \mid a \in A, i \in F} \right\}.
\end{equation*}
Following the arguments of \cite[Corollary 5.1]{DK18} it can be shown that $(\rho, v)$ is $F$-covariant if and only if
\[
\rho(a) = \psi_{\un{n}}(\phi_{\un{n}}(a)) \foral a \in A \text{ and } \supp \un{n} \subseteq F.
\]
We write $\N\O(A, X)$ for the algebra related to the $\{1, \dots, N\}$-covariant pairs.
Sometimes $\N\O(A,X)$ appears as $\N\O(X)_{\textup{cov}}$ in the literature, but here we see it as an example in the bigger class of relative Nica-Pimsner algebras.
The latter is not always the Cuntz-analogue for product systems as it may not attain an isometric copy of $A$.
By the Gauge-Invariant-Uniqueness-Theorem for regular product systems of \cite{SY11} we get that $\N\O(A, X) = \N\O(X)$ if every $\phi_{\Bi}$ is injective.

\subsection{Kubo-Martin-Schwinger states}

Let $\si \colon \bR \to \Aut(\A)$ be an action on a C*-algebra $\A$.
Then there exists a norm-dense $\si$-invariant $*$-subalgebra $\A_{\textup{an}}$ of $\A$ such that for every $f \in \A_{\textup{an}}$ the function $\bR \ni r \mapsto \si_r(f) \in \A$ is analytically continued to an entire function $\bC \ni z \mapsto \si_z(f) \in \A$ \cite[Proposition 2.5.22]{BraRob87}.
If $\be > 0$, then a state $\vphi$ of $\A$ is called a \emph{$(\si,\be)$-KMS state} (or \emph{equilibrium state at $\be$}) if it satisfies the KMS-condition:
\begin{equation*}
\vphi(f g) = \vphi(g \si_{i\be}(f)) \, \text{ for all $f, g$ in a norm-dense $\si$-invariant $*$-subalgebra of $\A_{\text{an}}$}.
\end{equation*}
If $\be=0$ or if the action is trivial then a KMS-state is a tracial state on $\A$.
The KMS-condition follows as an equivalent for the existence of particular continuous functions \cite[Proposition 5.3.7]{BraRob97}. More precisely, a state $\vphi$ is an equilibrium state at $\be >0$ if and only if for any pair $f, g \in \A$ there exists a complex function $F_{f, g}$ that is analytic on $D = \{ z \in \bC \mid 0 < \im(z) < \be\}$ and continuous (hence bounded) on $\ol{D}$ such that
\[
F_{f, g}(r) = \vphi(f \si_r(g)) \text{ and } F_{f, g}(r + i \be) = \vphi(\si_r(g) f) \foral r \in \bR.
\]
A state $\vphi$ of $\A$ is called a \emph{KMS$_\infty$-state} if it is the weak*-limit of $(\si,\be)$-KMS states as $\be \uparrow \infty$.
A state $\vphi$ of a C*-algebra $\A$ is called a \emph{ground state} if the function $z \mapsto \vphi(f \si_{z}(g))$ is bounded on $\{z \in \bC \mid \text{Im}z >0\}$ for all $f, g$ inside a dense analytic subset of $\A$.
The distinction between ground states and KMS${}_\infty$-states is not apparent in \cite{BraRob97} and is coined in \cite{LacRae10}.

\begin{remark}
Henceforth we will focus on rotational actions in direct relation to Gibbs states.
Recall that in the classical case the $\bR$-action is induced by $r \mapsto e^{i r (H - \ka N)}$ where $H$ is the selfadjoint Hamiltonian, $N$ is the number operator and $\ka$ is the chemical potential.
When $H$ is the Quantum Harmonic Oscillator then $H - \ka N = h\om/2 + (h\om - \ka)N$ where $h\om/2$ is the zero point energy.
The Spectral Theorem then materializes the $\bR$-action as induced by the rotational number unitaries $r \mapsto u_{e^{irs}}$ for $s = h \om - \ka$ on the related Fock space, for example see \cite[Proposition 2.1]{Kak17}.
The same arguments apply for the multivariable version of the Quantum Harmonic Oscillator, i.e., the tensor product of number operators fiberwise.
We will be scaling $h \om - \ka = 1$; substituting $\be$ with $(h \om - \ka) \be$ unravels the general case.
\end{remark}

Let $\{\ga_{\un{z}}\}_{\un{z} \in \bT^n}$ be the gauge action on $\N\T(X)$ and define
\[
\si \colon \bR \to \Aut(\N\T(X)): r \mapsto \ga_{(\exp(i r), \dots, \exp(i r))}.
\]
The monomials of the form $t(\xi_{\un{n}}) t(\eta_{\un{m}})^*$ span a dense $*$-subalgebra of analytic elements of $\N\T(X)$ since the function
\[
\bR \to \N\T(X): r \mapsto \si_r(t(\xi_{\un{n}}) t(\eta_{\un{m}})^*) = e^{i |\un{n} - \un{m}| r} t(\xi_{\un{n}}) t(\eta_{\un{m}})^*
\]
is analytically extended to the entire function
\[
\bC \to \N\T(X) : z \mapsto e^{i |\un{n} - \un{m}| z} t(\xi_{\un{n}}) t(\eta_{\un{m}})^*.
\]
For $\be \in \bR$ the $(\si,\be)$-KMS condition is then equivalent to having
\begin{equation}\label{eq:kms}
\vphi(t(\xi_{\un{n}}) t(\eta_{\un{m}})^* \cdot t(\xi_{\un{k}}) t(\eta_{\un{w}})^*)
=
e^{-|\un{n} - \un{m}| \be} \vphi(t(\xi_{\un{k}}) t(\eta_{\un{w}})^* \cdot t(\xi_{\un{n}}) t(\eta_{\un{m}})^*)
\end{equation}
for all $\un{n}, \un{m}, \un{k}, \un{w} \in \bZ_+^N$.

\begin{definition}
Let $X$ be a compactly aligned product system.
For a fixed $\be >0$ we write $\Eq_\be(\N\T(X))$ for the $(\si,\be)$-KMS states on $\N\T(X)$ with respect to the action
\[
\si \colon \bR \to \Aut(\N\T(X)): r \mapsto \ga_{(\exp(i r), \dots, \exp(i r))}.
\]
If $E \colon \N \T(X) \to \N\T(X)^\ga$ is the conditional expectation of the gauge action, then we write
\[
\GEq_\be(\N\T(X)) := \{ \vphi \in \Eq_\be(\N\T(X)) \mid \vphi = \vphi E\}
\]
for the sub-simplex of the gauge-invariant equilibrium states.
\end{definition}

We note that it suffices to consider the unital case.
If there is at least one $X_{\Bi_{0}}$ that is not unital then consider the unitization $A^{1} = A + \bC$ of $A$ and extend the operations $\phi_{\un{n}}(1) \xi_{\un{n}} = \xi_{\un{n}} = \xi_{\un{n}} \cdot 1$.
Note here that $A^1 = A \oplus \bC$ if $A$ is unital but $\phi_{\un{n}}(1_A) \neq 1_{X_{\un{n}}}$.
We will write $X^{1} = \{X_{\un{n}}^{1}\}_{\un{n} \in \bZ_+^N}$.

\begin{proposition}\label{P:unital}
Let $X$ be a compactly aligned product system over $A$.
Then $\vphi$ is a $(\si,\be)$-KMS state for $\N\T(X^1)$ if and only if it restricts to a $(\si,\be)$-KMS state on $\N\T(X)$.
\end{proposition}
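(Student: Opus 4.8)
The plan is to realize $\N\T(X)$ as a $\si$-invariant ideal inside the unital algebra $\N\T(X^1)$ and to transport the KMS condition across the two algebras. First I would record the structural fact that, since $X^1_{\un n} = X_{\un n}$ for $\un n \neq \un 0$ while $X^1_{\un 0} = A^1$, the generators $\pi(a)$ ($a \in A$) and $t(\xi_{\un n})$ ($\xi_{\un n}\in X_{\un n}$) of $\N\T(X)$ sit inside $\N\T(X^1)$ and the only extra generator is the unit $\pi(1_{A^1})$; thus $\N\T(X^1) = \N\T(X) \oplus \bC\,\pi(1_{A^1})$ with $\N\T(X)$ a $\si$-invariant ideal of codimension one and $\pi(1_{A^1})$ fixed by the gauge action, hence by $\si$. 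The dense $\si$-invariant $*$-subalgebra of analytic elements of $\N\T(X^1)$ is then $\A := \A_0 + \bC\,\pi(1_{A^1})$, where $\A_0 = \spn\{t(\xi_{\un n}) t(\eta_{\un m})^* \}$ is the corresponding dense analytic $*$-subalgebra of $\N\T(X)$ (with $\pi(a)$ in the case $\un n = \un m = \un 0$).

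For the forward implication the work is routine. If $\vphi$ is a $(\si,\be)$-KMS state on $\N\T(X^1)$, then for $f, g \in \A_0$ both $fg$ and $g\,\si_{i\be}(f)$ lie in the $\si$-invariant subalgebra $\N\T(X)$, so the identity $\vphi(fg) = \vphi(g\,\si_{i\be}(f))$ restricts verbatim to one for $\vphi|_{\N\T(X)}$ on the dense $\si$-invariant set $\A_0$; one then checks that $\vphi|_{\N\T(X)}$ is positive of norm one, so it is a $(\si,\be)$-KMS state.

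The substantive direction is the converse, where I must show that a given state $\vphi$ on $\N\T(X^1)$ whose restriction to $\N\T(X)$ is a $(\si,\be)$-KMS state is itself KMS. Since $\vphi(\pi(1_{A^1})) = 1$ and $\N\T(X^1) = \N\T(X)\oplus\bC\,\pi(1_{A^1})$, the functional $\vphi$ is already determined by $\vphi|_{\N\T(X)}$ together with linearity, so it remains to verify the KMS identity on the dense $\si$-invariant $*$-algebra $\A$. By linearity in each of $f$ and $g$ it suffices to treat $f$ and $g$ each equal to a monomial in $\A_0$ or to $\pi(1_{A^1})$. When both lie in $\A_0$ the identity is exactly the hypothesis that $\vphi|_{\N\T(X)}$ is KMS; when $f = \pi(1_{A^1})$ both sides reduce to $\vphi(g)$ because $\si_{i\be}$ fixes the unit; and the case $f = g = \pi(1_{A^1})$ is trivial. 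The remaining, genuinely nontrivial, case is $g = \pi(1_{A^1})$ with $f \in \A_0$, where the identity becomes $\vphi(f) = \vphi(\si_{i\be}(f))$; for a monomial $f = t(\xi_{\un n}) t(\eta_{\un m})^*$ one has $\si_{i\be}(f) = e^{-(|\un n| - |\un m|)\be} f$. I expect this pairing against the unit to be the main obstacle, and I would resolve it using that any $(\si,\be)$-KMS state is invariant under the real dynamics: applying $\si$-invariance of $\vphi|_{\N\T(X)}$ to $f$ yields $\vphi(f) = e^{i(|\un n| - |\un m|)r}\vphi(f)$ for all $r \in \bR$, so $\vphi(f) = 0$ whenever $|\un n| \neq |\un m|$, while for $|\un n| = |\un m|$ one has $\si_{i\be}(f) = f$; in either case $\vphi(f) = \vphi(\si_{i\be}(f))$. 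This verifies the KMS identity on $\A$ and hence on all of $\N\T(X^1)$.
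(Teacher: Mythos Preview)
Your approach is exactly the paper's: recognise $\N\T(X^1)$ as the unitization of $\N\T(X)$ and carry the KMS condition across that identification. The paper records this structural fact in one line and defers the details to \cite[Proposition~3.2]{Kak17}; you have supplied those details, and in particular your handling of the nontrivial case $g=\pi(1_{A^1})$ in the converse direction, via $\si$-invariance of the KMS state on the ideal, is correct.
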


\begin{proof}
As $\si$ is the same action on both $\N\T(X^{1})$ and $\N\T(X)$ the proof follows by noting that 
$\N\T(X^{1})$ is the unitization of $\N\T(X)$.
See also \cite[Proposition 3.2]{Kak17} for more details from the $\bZ_+$-case.
\end{proof}

\begin{remark}
Due to Proposition \ref{P:unital} we assume that the product system is unital for our proofs and exposition.
However the results cover also non-unital product systems.
We will use the same symbol for the extension of a state from $\N\T(X)$ to $\N\T(X^{1})$ from Proposition \ref{P:unital}.
\end{remark}

We have an easier version of the KMS-condition that will be handy for our computations.

\begin{proposition}\label{P:char}
Let $X$ be a compactly aligned product system over $A$ and let $\be > 0$.
Then a positive functional $\vphi$ of $\N\T(X)$ satisfies the KMS-condition (\ref{eq:kms}) if and only if
\begin{equation}\label{eq:kms2}
\vphi(t(\xi_{\un{n}}) \cdot t(\xi_{\un{k}}) t(\eta_{\un{w}})^*) = e^{-|\un{n}| \be} \vphi(t(\xi_{\un{k}}) t(\eta_{\un{w}})^* \cdot t(\xi_{\un{n}}))
\foral
\un{n}, \un{k}, \un{w} \in \bZ_+^N.
\end{equation}
If in addition $\vphi = \vphi E$, then $\vphi$ satisfies the KMS-condition (\ref{eq:kms}) if and only if
\begin{equation}\label{eq:kms3}
\vphi(t(\xi_{\un{n}}) t(\eta_{\un{m}})^*)
= 
\de_{\un{n}, \un{m}} e^{-|\un{n}|\be} \vphi(t(\eta_{\un{m}})^* t(\xi_{\un{n}}))
\foral
\xi_{\un{n}} \in X_{\un{n}}, \eta_{\un{m}} \in X_{\un{m}}.
\end{equation}
Consequently, two gauge-invariant $(\si, \be)$-KMS states coincide if and only if they agree on $\pi(A)$.
\end{proposition}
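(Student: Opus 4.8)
The plan is to push everything onto the monomials $t(\xi_{\un{n}})t(\eta_{\un{m}})^*$, on which $\si_{i\be}$ acts by the scalar $e^{-|\un{n}-\un{m}|\be}$. By Proposition~\ref{P:unital} I may assume that $X$ is unital and write $1 = \pi(1_A) = t(1_A)$ for the unit of $\N\T(X)$. First I would record the ingredients to be used: \eqref{eq:kms} \emph{is} the $(\si,\be)$-KMS condition; by \eqref{eq:Fow} the monomials $t(\xi_{\un{n}})t(\eta_{\un{m}})^*$ span a norm-dense, $\si$-invariant $*$-subalgebra $\A_0$ of analytic elements of $\N\T(X)$; a positive $\vphi$ is bounded and satisfies $\vphi(f^*) = \ol{\vphi(f)}$; and, when $\vphi = \vphi E$, the expectation $E$ annihilates every element homogeneous of nonzero gauge-degree, in particular every monomial $t(\xi_{\un{n}})t(\eta_{\un{m}})^*$ with $\un{n}\neq\un{m}$. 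These are the only properties of $\vphi$ I shall invoke.

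For \eqref{eq:kms} $\Rightarrow$ \eqref{eq:kms2} I would just substitute $\un{m}=\un{0}$, $\eta_{\un{m}}=1$. For the converse, \eqref{eq:kms2} extends by linearity and norm-density of $\A_0$ to $\vphi(t(\xi_{\un{n}})y) = e^{-|\un{n}|\be}\vphi(y\,t(\xi_{\un{n}}))$ for all $y \in \N\T(X)$; replacing $y$ by $y^*$ and conjugating yields the adjoint version $\vphi(y\,t(\xi_{\un{n}})^*) = e^{-|\un{n}|\be}\vphi(t(\xi_{\un{n}})^*y)$, and $\un{n}=\un{0}$ gives $\vphi(\pi(a)y) = \vphi(y\,\pi(a))$. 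For a monomial $A = t(\xi_{\un{n}})t(\eta_{\un{m}})^*$ and arbitrary $B$, I would then write $\vphi(AB) = \vphi\bigl(t(\xi_{\un{n}})\cdot t(\eta_{\un{m}})^*B\bigr)$, move $t(\xi_{\un{n}})$ to the right (picking up $e^{-|\un{n}|\be}$) and then $t(\eta_{\un{m}})^*$ to the right (picking up $e^{+|\un{m}|\be}$), landing on $e^{-|\un{n}-\un{m}|\be}\vphi(BA) = \vphi(B\,\si_{i\be}(A))$. Since such $A$ span $\A_0$, this is the KMS condition, i.e.\ \eqref{eq:kms}.

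Assuming in addition $\vphi = \vphi E$, it remains to pass between \eqref{eq:kms2} and \eqref{eq:kms3}. From \eqref{eq:kms2} to \eqref{eq:kms3}: for $\un{n}\neq\un{m}$ one has $\vphi(t(\xi_{\un{n}})t(\eta_{\un{m}})^*) = \vphi E(t(\xi_{\un{n}})t(\eta_{\un{m}})^*) = 0$, matching the Kronecker delta, and for $\un{n}=\un{m}$ the choice $t(\xi_{\un{k}})=1$ in \eqref{eq:kms2} is precisely \eqref{eq:kms3}. From \eqref{eq:kms3} to \eqref{eq:kms2}: both sides of \eqref{eq:kms2} are homogeneous of gauge-degree $\un{n}+\un{k}-\un{w}$, so $\vphi = \vphi E$ forces both to vanish unless $\un{w}=\un{n}+\un{k}$; in that case $\un{n}\leq\un{w}$, and splitting $\xi_{\un{n}}\xi_{\un{k}}$ and $\eta_{\un{w}}=\eta_{\un{n}}\eta_{\un{k}}$ via the product rule, using $t(\eta_{\un{n}})^*t(\xi_{\un{n}}) = \pi(\sca{\eta_{\un{n}},\xi_{\un{n}}})$ (and absorbing the resulting $\pi(A)$-factor into the module vector), and applying \eqref{eq:kms3} on diagonal monomials, I would collapse \emph{both} sides to $e^{-|\un{w}|\be}\,\vphi\bigl(\pi(\sca{\eta_{\un{w}},\,\xi_{\un{n}}\xi_{\un{k}}})\bigr)$; the two resulting elements of $A$ coincide by associativity of the $A$-valued inner product on $X_{\un{n}}\otimes_A X_{\un{k}}$. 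Hence \eqref{eq:kms2}, and so \eqref{eq:kms}, holds.

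For the last statement, \eqref{eq:kms3} shows a gauge-invariant equilibrium state obeys $\vphi(t(\xi_{\un{n}})t(\eta_{\un{m}})^*) = \de_{\un{n},\un{m}}\,e^{-|\un{n}|\be}\,\vphi\bigl(\pi(\sca{\eta_{\un{m}},\xi_{\un{n}}})\bigr)$, so $\vphi$ is determined on every monomial --- hence, by boundedness and norm-density of $\A_0$, on all of $\N\T(X)$ --- by $\vphi|_{\pi(A)}$, which is the assertion. I expect the only genuinely delicate point to be the implication \eqref{eq:kms3} $\Rightarrow$ \eqref{eq:kms2}: the bookkeeping of which monomials survive $E$ (forcing $\un{w}=\un{n}+\un{k}$) and the check that the KMS scalars and the inner-product identities match on the surviving terms; the rest is a one-line specialisation or a formal rearrangement of exponential factors.
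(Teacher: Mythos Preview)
Your argument is correct and, for the equivalence \eqref{eq:kms}$\Leftrightarrow$\eqref{eq:kms2} and the final clause, essentially coincides with the paper's proof: both extend \eqref{eq:kms2} by continuity, take adjoints, and move $t(\xi_{\un{n}})$ and $t(\eta_{\un{m}})^*$ across one at a time. Your extension to all $y\in\N\T(X)$ is marginally cleaner than the paper's, which tracks the specific closures $\ol{t(X_{\un{k}'})t(X_{\un{w}'})^*}$ via Fowler's relation \eqref{eq:Fow}.

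The step \eqref{eq:kms3}$\Rightarrow$\eqref{eq:kms2} is where you diverge. Once $\un{w}=\un{n}+\un{k}$, the paper applies \eqref{eq:kms3} twice as a swap rule: first at level $\un{n}+\un{k}$ to move $t(\eta_{\un{w}})^*$ left, then at level $\un{k}$ (using $t(\eta_{\un{w}})^*t(\xi_{\un{n}})\in\ol{t(X_{\un{k}})^*}$) to move $t(\xi_{\un{k}})$ back, landing directly on the right-hand side of \eqref{eq:kms2}. You instead evaluate both sides down to $e^{-|\un{w}|\be}\vphi\pi(\cdot)$ and compare the resulting inner products. Both work; the paper's route avoids one point you leave implicit, namely that writing $\eta_{\un{w}}=\eta_{\un{n}}\eta_{\un{k}}$ is only possible on simple tensors, so you are tacitly reducing to a dense spanning set of $X_{\un{w}}$ and invoking linearity and continuity. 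This is routine, but you should say it.
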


\begin{proof}
For the first part, equation (\ref{eq:kms}) implies equation (\ref{eq:kms2}) for $\eta_{\un{m}} = 1_A$.
For the converse, we may use continuity to deduce that
\[
\vphi(t(\xi_{\un{n}}) f) = e^{-|\un{n}| \be} \vphi(f t(\xi_{\un{n}}))
\; \text{ for all } \;
f \in \ol{t(X_{\un{k}}) t(X_{\un{w}})^*}
\; \text{ and for all } \;
\un{n}, \un{k}, \un{w} \in \bZ_+^N.
\]
Since $\vphi$ is positive, by applying adjoints in equation (\ref{eq:kms2}) we get
\[
\vphi(t(\xi_{\un{n}})^* f)
=
e^{|\un{n}| \be} \vphi(f t(\xi_{\un{n}})^*)
\; \text{ for all } \;
f \in \ol{t(X_{\un{k}}) t(X_{\un{w}})^*}
\; \text{ and for all } \;
\un{n}, \un{k}, \un{w} \in \bZ_+^N.
\]
Now for $\un{m} \in \bZ_+^N$ we have that
\[
t(\eta_{\un{m}})^* t(\xi_{\un{k}}) t(\xi_{\un{w}})^*
\in
t(X_{\un{m}})^* t(X_{\un{k}}) t(X_{\un{w}})^*
\subseteq
\ol{t(X_{\un{k} - \un{k} \wedge \un{m}}) t(X_{\un{m} - \un{k} \wedge \un{m} + \un{w}})^*}.
\]
At the same time we have that
\[
t(\xi_{\un{k}}) t(\eta_{\un{w}})^* t(\xi_{\un{n}})
\in
t(X_{\un{k}}) t(X_{\un{w}})^* t(X_{\un{n}})
\subseteq
\ol{t(X_{\un{k} + \un{n} - \un{n} \wedge \un{w}}) t(X_{\un{w} - \un{n} \wedge \un{w}})^*}.
\]
Therefore we have that
\begin{align*}
\vphi(t(\xi_{\un{n}}) t(\eta_{\un{m}})^* \cdot t(\xi_{\un{k}}) t(\eta_{\un{w}})^*)
& =
\vphi(t(\xi_{\un{n}}) \cdot t(\eta_{\un{m}})^* t(\xi_{\un{k}}) t(\eta_{\un{w}})^*) \\
& =
e^{-|\un{n}| \be} \vphi(t(\eta_{\un{m}})^* \cdot t(\xi_{\un{k}}) t(\eta_{\un{w}})^* t(\xi_{\un{n}})) \\
& =
e^{-|\un{n}| \be} e^{|\un{m}| \be} \vphi(t(\xi_{\un{k}}) t(\eta_{\un{w}})^* t(\xi_{\un{n}}) t(\eta_{\un{m}})^*)
\end{align*}
which gives equation (\ref{eq:kms}).

Now suppose that in addition $\vphi = \vphi E$.
If $\vphi$ satisfies equation (\ref{eq:kms}) then it also implies equation (\ref{eq:kms3}).
Conversely we will show that if $\vphi$ satisfies equation (\ref{eq:kms3}) then it satisfies equation (\ref{eq:kms2}), and thus equation (\ref{eq:kms}).
To this end we consider two cases.
Suppose first that $\un{n} + \un{k} \neq \un{w}$.
Since $\vphi = \vphi E$ we thus have 
\[
\vphi(t(\xi_{\un{n}}) t(\xi_{\un{k}}) t(\eta_{\un{w}})^*)
=
\vphi E(t(\xi_{\un{n}}) t(\xi_{\un{k}}) t(\eta_{\un{w}})^*)
= 0.
\]
At the same time we have that
\[
t(\xi_{\un{k}}) t(\eta_{\un{w}})^* t(\xi_{\un{n}})
\in
\ol{t(X_{\un{k} + \un{n} - \un{n} \wedge \un{w}}) t(X_{\un{w} - \un{n} \wedge \un{w}})^*)}.
\]
Since $\un{k} + \un{n} - \un{n} \wedge \un{w} \neq \un{w} - \un{n} \wedge \un{w}$ we get that
\[
e^{-|\un{n}| \be} \vphi(t(\xi_{\un{k}}) t(\eta_{\un{w}})^* t(\xi_{\un{n}}))
=
e^{-|\un{n}| \be} \vphi E(t(\xi_{\un{k}}) t(\eta_{\un{w}})^* t(\xi_{\un{n}}))
=
0,
\]
so that equation (\ref{eq:kms2}) holds trivially.
Now suppose that $\un{n} + \un{k} = \un{w}$ so that $\un{w} \geq \un{n}$.
Then we have that $t(\eta_{\un{w}})^* t(\xi_{\un{n}}) \in t(X_{\un{w} - \un{n}})^* = t(X_{\un{k}})^*$ and thus
\begin{align*}
\vphi(t(\xi_{\un{n}}) \cdot t(\xi_{\un{k}}) t(\eta_{\un{w}})^*)
& =
\vphi(t(\xi_{\un{n}}) t(\xi_{\un{k}}) \cdot t(\eta_{\un{w}})^*)
 =
e^{-|\un{n} + \un{k}| \be} \vphi(t(\eta_{\un{w}})^* t(\xi_{\un{n}}) \cdot t(\xi_{\un{k}})) \\
& =
e^{-|\un{n} + \un{k}| \be} e^{|\un{k}| \be} \vphi(t(\xi_{\un{k}}) t(\eta_{\un{w}})^* t(\xi_{\un{n}}) ) 
 =
e^{-|\un{n}| \be} \vphi(t(\xi_{\un{k}}) t(\eta_{\un{w}})^* \cdot t(\xi_{\un{n}})),
\end{align*}
and the proof is complete.
\end{proof}

\section{Wold decomposition}\label{S:wold}

Suppose that $X$ is a unital product system with a unit decomposition $x = \{x_{i,j} \mid j=1, \dots, d_i, i=1, \dots, N\}$.
For every $i \in \{1, \dots, N\}$ we let the projections
\begin{equation*}
P_\Bi := \sum_{j=1}^{d_i} t(x_{i,j}) t(x_{i,j})^* \in \N\T(X).
\end{equation*}
For $F \subseteq \{1, \dots, N\}$ we define the projections
\begin{equation*}
P_F := \prod_{i \in F} P_{\Bi}
\qand 
Q_F := \prod_{i \in F} (1 - P_\Bi).
\end{equation*}
where $1 \equiv 1_{\F X}$.
It is clear that all these projections are in $\N\T(X)$.
For every $\un{n} \in \bZ_+^N$ let the projection $p_{\un{n}} \colon \F X \to X_{\un{n}}$.
It is straightforward to check that
\[
P_{F} = \sum\{ p_{\un{n}} \mid \un{n} \geq \un{1}_F\} = \sum_{\ell(\umu) = \un{1}_F} t(x_{\umu}) t(x_{\umu})^*,
\]
and therefore the $P_F$ and $Q_F$ all commute.
In particular they are in the center of the fixed point algebra $\N\T(X)^\ga$.
We will also write
\begin{equation*}
P_{k \cdot \Bi} := \sum\{ p_{\un{n}} \mid \un{n} \geq k \cdot \Bi\} = \sum_{\ell(\umu) = k \cdot \Bi} t(x_{\umu}) t(x_{\umu})^*.
\end{equation*}
For every $\un{n} \in \bZ_+^N$ we define the $\un{n}$-th ``translate'' of $Q_F \equiv Q_F^{\un{0}}$ by
\begin{equation*}
Q_{F}^{\un{n}} := \sum_{\ell(\umu) = \un{n}} t(x_{\umu}) Q_F t(x_{\umu})^*.
\end{equation*}
If in addition $\un{n} \in F$ then we can directly verify that
\[
Q_F^{\un{n}} \xi_{\un{w}} 
= \de_{\un{n}, \un{w}_F} \sum_{\ell(\umu) = \un{n}} x_{\umu} \sca{x_{\umu}, \xi_{\un{w}}}
= \de_{\un{n}, \un{w}_F} \xi_{\un{w}},
\]
and therefore
\begin{equation}\label{eq:p-r}
Q_F^{\un{n}} t(\xi_{\un{m}})
=
\begin{cases}
t(\xi_{\un{m}}) Q_F^{\un{n} - \un{m}_F} & \text{ if } \un{n} \geq \un{m}_F,\\
0 & \text{ otherwise}.
\end{cases}
\end{equation}

\begin{lemma}\label{L:proj}
With the aforementioned notation we have that
\[
Q_{F}^{\un{n}} \cdot Q_{C}^{\un{m}} = \de_{\un{n}, \un{m}_F} \cdot Q_{C}^{\un{m}} \foral \un{n} \in F, \un {m} \in C \text{ with } \mt \neq F \subseteq C.
\]
\end{lemma}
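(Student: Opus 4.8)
The plan is to recognise both $Q_F^{\un{n}}$ and $Q_C^{\un{m}}$ as coordinate projections on the Fock space $\F X$ and then simply intersect the relevant index sets. First I would record that, for $\un{n} \in F$, the identity displayed just before \eqref{eq:p-r} says $Q_F^{\un{n}} \xi_{\un{w}} = \de_{\un{n}, \un{w}_F}\, \xi_{\un{w}}$ for every $\xi_{\un{w}} \in X_{\un{w}}$ and every $\un{w} \in \bZ_+^N$. Since the subspaces $X_{\un{w}}$ exhaust $\F X$, this means
\[
Q_F^{\un{n}} = \sum \{ p_{\un{w}} \mid \un{w}_F = \un{n} \}.
\]
The same observation applied to $\un{m} \in C$ gives $Q_C^{\un{m}} = \sum \{ p_{\un{w}} \mid \un{w}_C = \un{m} \}$. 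As the $p_{\un{w}}$ are mutually orthogonal projections with $\sum_{\un{w}} p_{\un{w}} = 1_{\F X}$, multiplying the two expressions yields
\[
Q_F^{\un{n}} \cdot Q_C^{\un{m}} = \sum \{ p_{\un{w}} \mid \un{w}_F = \un{n} \text{ and } \un{w}_C = \un{m} \}.
\]

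The second step is the bookkeeping that collapses this index set. Because $F \subseteq C$ one has $(\un{w}_C)_F = \un{w}_F$ for every $\un{w} \in \bZ_+^N$, so the condition $\un{w}_C = \un{m}$ already forces $\un{w}_F = \un{m}_F$. Hence if $\un{n} \neq \un{m}_F$ the index set above is empty and $Q_F^{\un{n}} Q_C^{\un{m}} = 0$, whereas if $\un{n} = \un{m}_F$ the requirement $\un{w}_F = \un{n}$ is redundant and the sum reduces to $\sum \{ p_{\un{w}} \mid \un{w}_C = \un{m} \} = Q_C^{\un{m}}$. In both cases the right-hand side is $\de_{\un{n}, \un{m}_F}\, Q_C^{\un{m}}$, which is exactly the claim.

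As an alternative one can stay at the operator level: applying \eqref{eq:p-r} to each summand $t(x_{\umu}) Q_C t(x_{\umu})^*$ of $Q_C^{\un{m}}$ (so $\ell(\umu) = \un{m}$) gives $Q_F^{\un{n}} t(x_{\umu}) = t(x_{\umu}) Q_F^{\un{n} - \un{m}_F}$ when $\un{n} \geq \un{m}_F$ and $0$ otherwise; then $Q_F^{\un{k}} Q_C = \de_{\un{k}, \un{0}}\, Q_C$ for $\un{k} \in F$, where the case $\un{k} = \un{0}$ uses $F \subseteq C$ to write $Q_C = Q_F \prod_{i \in C \setminus F}(1 - P_\Bi)$ and hence $Q_F Q_C = Q_C$, while for $\un{k} \neq \un{0}$ one uses that $Q_F^{\un{k}}$ kills every $X_{\un{w}}$ with $\un{w} \perp C \supseteq F$. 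Reassembling the sum over $\umu$ gives the same conclusion. I expect no genuine obstacle here; the only point needing care is the index arithmetic on $\bZ_+^N$ — in particular that $\un{n} \in F$ together with $F \subseteq C$ makes $\un{n}_F = \un{n}$, and that the equality $\un{n} = \un{m}_F$ is subsumed by the inequality $\un{n} \geq \un{m}_F$, so no case is lost when passing between the ``$=$'' and the ``$\geq$'' descriptions.
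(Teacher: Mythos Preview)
Your first argument is correct and is a genuinely cleaner route than the paper's. You exploit directly the Fock-space identity $Q_F^{\un{n}}\xi_{\un{w}} = \de_{\un{n},\un{w}_F}\xi_{\un{w}}$ (stated just above \eqref{eq:p-r}) to write each $Q_F^{\un{n}}$ as the strict sum $\sum\{p_{\un{w}} : \un{w}_F = \un{n}\}$, and then the lemma becomes pure index bookkeeping via $(\un{w}_C)_F = \un{w}_F$. The paper instead stays at the algebraic level: it expands the double sum $\sum_{\unu}\sum_{\umu} t(x_{\unu}) Q_F t(x_{\unu})^* t(x_{\umu}) Q_C t(x_{\umu})^*$, uses Fowler's relation \eqref{eq:Fow} together with \eqref{eq:p-r} to analyse $Q_F t(x_{\unu})^* t(x_{\umu}) Q_C$ in three cases ($\un{n} \not\leq \un{m}$; $\un{n}\leq\un{m}$ but $\un{m}_F\neq\un{n}$; $\un{n}=\un{m}_F$), and then collapses the double sum. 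Your geometric approach buys brevity and transparency; the paper's approach has the minor advantage that every displayed sum is finite (the $p_{\un{w}}$-description is an infinite strict sum in $\L(\F X)$, which is harmless here but worth a word). Your alternative sketch using \eqref{eq:p-r} followed by $Q_F^{\un{k}}Q_C = \de_{\un{k},\un{0}}Q_C$ is essentially the paper's argument reorganised, and your closing remark about $\un{n}=\un{m}_F$ versus $\un{n}\geq\un{m}_F$ correctly handles the one place where cases must be reconciled.
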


\begin{proof}
Let $\ell(\unu) = \un{n} \in F$ and $\ell(\umu) = \un{m} \in C$.
If $\un{n} - \un{n} \wedge \un{m} \neq \un{0}$ then it has an intersection with $F$, and thus with $C$, and so equation (\ref{eq:p-r}) yields
\[
Q_F t(x_{\unu})^* t(x_{\umu}) Q_C
\in
Q_F \ol{t(X_{\un{m} - \un{n} \wedge \un{m}}) \cdot t(X_{\un{n} - \un{n} \wedge \un{m}})^*} Q_C
=
(0).
\]
If $\un{n} = \un{n} \wedge \un{m}$, i.e., if $\un{n} \leq \un{m}$, but $\un{m}_F \neq \un{n}$ then $\un{m} - \un{n}$ has an intersection with $F$ and so again equation (\ref{eq:p-r}) yields
\[
Q_F t(x_{\unu})^* t(x_{\umu}) Q_C
\in
Q_F t(X_{\un{m} - \un{n}}) Q_C
=
(0).
\]
If $\un{n} = \un{m}_F$ then $t(x_{\unu})^* t(x_{\umu}) \in t(X_{\un{m} - \un{m}_F})$ and so equation (\ref{eq:p-r}) gives again
\[
Q_F t(x_{\unu})^* t(x_{\umu}) = t(x_{\unu})^* t(x_{\umu}) Q_F. 
\]
Therefore we get
\begin{align*}
Q_F^{\un{n}} \cdot Q_C^{\un{m}}
& =
\sum_{\ell(\unu) = \un{n}} \sum_{\ell(\umu) = \un{m}} t(x_{\unu}) Q_F t(x_{\unu})^* t(x_{\umu}) Q_C t(x_{\umu})^* \\
& =
\de_{\un{n}, \un{m}_F} \sum_{\ell(\unu) = \un{n}} \sum_{\ell(\umu) = \un{m}} t(x_{\unu}) t(x_{\unu})^* t(x_{\umu}) Q_F Q_C t(x_{\umu})^* \\
& =
\de_{\un{n}, \un{m}_F} \sum_{\ell(\umu'') = \un{m}_{F^c}} \sum_{\ell(\umu') = \un{m}_F} \sum_{\ell(\unu) = \un{m}_F} [t(x_{\unu}) t(x_{\unu})^* t(x_{\umu'})] t(x_{\umu''}) Q_F Q_C t(x_{\umu''})^* t(x_{\umu'})^* \\
& =
\de_{\un{n}, \un{m}_F} \sum_{\ell(\umu) = \un{m}} t(x_{\umu}) Q_C t(x_{\umu})^*
=
\de_{\un{n}, \un{m}_F} \cdot Q_C^{\un{m}}. \qedhere
\end{align*}
\end{proof}

\begin{definition}
Let $X$ be a product system of finite rank over $A$ and fix $\be > 0$.
For every $F \subseteq \{1, \dots, N\}$ we define
\begin{equation*}
\Eq_\be^F(\N\T(X)) := \{\vphi \in \Eq_\be(\N\T(X)) \mid \sum_{\un{n} \in F} \vphi(Q_{F}^{\un{n}}) = 1 \text{ and } \vphi(Q_\Bi) = 0 \foral i \notin F\},
\end{equation*}
with the understanding that for $F = \{1, \dots, N\}$ we write
\begin{equation*}
\Eq_\be^{\{1, \dots, N\}}(\N\T(X)) \equiv 
\Eq_\be^{\fty}(\N\T(X)) := \{\vphi \in \Eq_\be(\N\T(X)) \mid \sum_{\un{n} \in \bZ_+^N} \vphi(p_{\un{n}}) = 1 \},
\end{equation*}
and for $F = \mt$ we write
\begin{equation*}
\Eq_\be^{\mt}(\N\T(X))
\equiv
\Eq_\be^\infty(\N\T(X)) := \{\vphi \in \Eq_\be(\N\T(X)) \mid \vphi(Q_\Bi) = 0 \foral i =1, \dots, N \}.
\end{equation*}
Likewise we define the $F$-parts for the gauge-invariant equilibrium states by 
\[
\GEq_\be^F(\N\T(X)) := \{ \vphi \in \Eq_\be^F(\N\T(X)) \mid \vphi = \vphi E\}.
\]
\end{definition}

The $F$-equilibria are connected with specific ideals in $\N\T(X)$.
We provide this description in the next proposition.

\begin{proposition}\label{P:states ideals}
Let $X$ be a product system of finite rank over $A$ and fix $\be > 0$.
Then a state $\vphi$ is in $\Eq_\be^F(\N\T(X))$ if and only if it restricts to a state on $\sca{Q_F}$ that satisfies the KMS-condition and annihilates $\sca{Q_\Bi \mid i \notin F}$.
\end{proposition}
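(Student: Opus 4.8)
The plan is to match the two defining conditions of $\Eq_\be^F(\N\T(X))$ with the two conditions on the right, using two auxiliary facts. \textbf{Fact 1:} the partial sums $R_k := \sum\{ Q_F^{\un{n}} \mid \un{n} \in F,\ \un{n} \leq k \cdot \un{1}_F \}$ form a gauge-invariant approximate unit of the ideal $\sca{Q_F}$; combined with the standard identity $\|\psi|_J\| = \lim_\lambda \psi(e_\lambda)$ for a positive functional $\psi$ and an approximate unit $(e_\lambda)$ of an ideal $J$, this turns the numerical condition $\sum_{\un{n} \in F} \vphi(Q_F^{\un{n}}) = 1$ into the statement that $\vphi|_{\sca{Q_F}}$ is a \emph{state}, since $\lim_k \psi(R_k) = \sum_{\un{n} \in F} \psi(Q_F^{\un{n}})$. \textbf{Fact 2:} if $\vphi \in \Eq_\be(\N\T(X))$ and $p$ is a $\si$-fixed projection with $\vphi(p) = 0$, then $\vphi$ annihilates the ideal $\sca{p}$. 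Fact 2 is quick: Cauchy--Schwarz for $\vphi$ gives $\vphi(px) = \vphi(xp) = 0$ for all $x$; the KMS identity together with $\si_{i\be}(p) = p$ gives $\vphi(apa^*) = \vphi(pa^*\si_{i\be}(a)) = 0$ (for analytic $a$, hence all $a$ by density); and one more application of Cauchy--Schwarz yields $\vphi(apb) = 0$ for all $a, b$. Applying this with $p = Q_\Bi$ (each gauge-fixed), the condition ``$\vphi(Q_\Bi) = 0$ for all $i \notin F$'' is equivalent, for $\vphi \in \Eq_\be(\N\T(X))$, to ``$\vphi$ annihilates $\sca{Q_\Bi \mid i \notin F}$'', the converse of each implication being trivial.

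For Fact 1 I would use \eqref{eq:p-r} and Lemma \ref{L:proj}. Each $Q_F^{\un{n}}$ is gauge-invariant, hence so is each $R_k$, and the $R_k$ form an increasing sequence of projections in $\sca{Q_F}$; thus it suffices to show $R_k a \to a$ (and $a R_k \to a$, by adjoints) for $a$ ranging over the dense spanning set $\{ t(\xi_{\un{n}}) t(\eta_{\un{m}})^* Q_F t(\xi_{\un{k}}) t(\eta_{\un{w}})^* \}$ of $\sca{Q_F}$. By \eqref{eq:p-r} such a term vanishes unless $\un{m} \perp F$, in which case it equals $t(\xi_{\un{n}}) Q_F \, t(\eta_{\un{m}})^* t(\xi_{\un{k}}) t(\eta_{\un{w}})^*$, so the matter reduces to showing $R_k t(\xi_{\un{n}}) Q_F = t(\xi_{\un{n}}) Q_F$ for large $k$. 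Using \eqref{eq:p-r} once more,
\[
R_k t(\xi_{\un{n}}) Q_F = t(\xi_{\un{n}}) \Big( \sum\{ Q_F^{\un{j}} \mid \un{j} \in F,\ \un{j} \leq k \cdot \un{1}_F - \un{n}_F \} \Big) Q_F ,
\]
and Lemma \ref{L:proj} with $C = F$ collapses $Q_F^{\un{j}} Q_F$ to $\de_{\un{j}, \un{0}} Q_F$, so the bracketed sum times $Q_F$ is just $Q_F$ as soon as $k \geq \max_i n_i$. Hence $R_k a = a$ eventually for each such $a$, and an $\eps/3$ estimate upgrades this to $R_k a \to a$ for all $a \in \sca{Q_F}$, proving Fact 1.

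Granting the two facts, the equivalence is immediate. If $\vphi \in \Eq_\be^F(\N\T(X))$, then $\vphi$ annihilates $\sca{Q_\Bi \mid i \notin F}$ by Fact 2, its restriction to the $\si$-invariant ideal $\sca{Q_F}$ satisfies the KMS condition on the dense analytic $*$-subalgebra $\sca{Q_F} \cap \N\T(X)_{\textup{an}}$, and $\|\vphi|_{\sca{Q_F}}\| = \sum_{\un{n} \in F} \vphi(Q_F^{\un{n}}) = 1$ by Fact 1, so the restriction is a state. Conversely, suppose the state $\vphi$ restricts to a KMS state $\omega$ on $\sca{Q_F}$ and annihilates $\sca{Q_\Bi \mid i \notin F}$; then $\vphi(Q_\Bi) = 0$ for $i \notin F$ and $\sum_{\un{n} \in F} \vphi(Q_F^{\un{n}}) = \|\omega\| = 1$, and it only remains to check that $\vphi$ is itself a $(\si,\be)$-KMS state. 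Since $\omega$ has norm one, $\vphi$ is its unique state extension and the images of the $R_k$ converge strongly to $1$ in the GNS representation of $\vphi$; for analytic $a$ and any $b$ we have $a R_k \in \sca{Q_F} \cap \N\T(X)_{\textup{an}}$ (the $R_k$ being $\si$-fixed), so
\[
\vphi(ab) = \lim_k \omega\big( (a R_k)(R_k b) \big) = \lim_k \omega\big( (R_k b)\, \si_{i\be}(a R_k) \big) = \lim_k \vphi\big( R_k b\, \si_{i\be}(a)\, R_k \big) = \vphi(b\, \si_{i\be}(a)) ,
\]
and since analytic elements are norm-dense this gives $\vphi \in \Eq_\be(\N\T(X))$, hence $\vphi \in \Eq_\be^F(\N\T(X))$. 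I expect the main obstacle to be Fact 1 --- choosing the right approximate unit and carrying out the commutation identities --- together with the routine but slightly delicate passage, in the converse direction, from a norm-one KMS functional on $\sca{Q_F}$ to a KMS state of $\N\T(X)$.
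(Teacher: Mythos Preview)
Your proof is correct and follows essentially the same strategy as the paper's: both establish that the $R_k$ form an approximate unit for $\sca{Q_F}$ via \eqref{eq:p-r} and the orthogonality in Lemma~\ref{L:proj} (the paper cites \cite[Proposition~5.5]{DK18} to shorten the description of the spanning set, but the computation is the same), and both deduce the annihilation of $\sca{Q_\Bi \mid i \notin F}$ from Cauchy--Schwarz plus the KMS identity. The one point on which you are more explicit is the converse direction: the paper's proof tacitly works with $\vphi$ already in $\Eq_\be(\N\T(X))$ and only verifies the $F$-conditions, whereas you read ``a state $\vphi$'' literally and supply the missing step that the unique norm-one extension of a KMS state on $\sca{Q_F}$ is itself KMS on $\N\T(X)$. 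Your GNS argument for this (strong convergence $R_k \to 1$, then sandwiching $ab$ by $R_k$) is correct; the strong convergence follows since $\overline{\pi_\vphi(\sca{Q_F})H_\vphi}$ is $\pi_\vphi(\N\T(X))$-invariant and contains $\xi_\vphi$, hence equals $H_\vphi$.
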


\begin{proof}
First we claim that $\{\sum_{\un{n} \leq k \cdot \un{1}_F} Q_F^{\un{n}} \mid k \in \bZ_+\}$ defines an approximate unit for $\sca{Q_F}$.
Indeed by \cite[Proposition 4.6]{DK18} and equation (\ref{eq:p-r}) we have that
\begin{align*}
\sca{Q_F} 
& = \ol{\spn}\{t(\xi_{\un{n}}) Q_F t(\eta_{\un{m}})^* \mid \un{n}, \un{m} \in \bZ_+^N\} \\
& = \ol{\spn}\{Q_F^{\un{n}_F} t(\xi_{\un{n}}) t(\eta_{\un{m}})^* Q_F^{\un{m}_F} \mid \un{n}, \un{m} \in \bZ_+^N\} \\ 
& = \ca(Q_F^{\un{n}} f Q_F^{\un{m}} \mid \un{n}, \un{m} \in F, f \in \N\T(X)).
\end{align*}
Then, by using the KMS-condition and orthogonality of $Q_F^{\un{n}}$, we have that $\vphi$ is uniquely identified by
\[
\vphi(f) 
= 
\lim_k \sum_{\un{n}, \un{m} \leq k \cdot \un{1}_F} \vphi(Q_F^{\un{n}} f Q_F^{\un{m}})
=
\lim_k \sum_{\un{n} \leq k \cdot \un{1}_F} \vphi(Q_F^{\un{n}} f Q_F^{\un{n}}).
\]

Secondly we have to show that if $\vphi(Q_\Bi) = 0$ for all $i \notin F$ then $\vphi$ annihilates the ideal the $Q_\Bi$ generate.
By the Cauchy-Schwartz inequality we have that $\vphi( Q_{\Bi} g ) = 0$ for all $g \in \N\T(X)$.
In particular we can use the KMS-condition (as all elements below are analytical) to obtain
\[
\vphi(t(\xi_{\un{n}}) t(\eta_{\un{m}})^* \cdot Q_\Bi \cdot t(\xi_{\un{k}}) t(\eta_{\un{w}})^*) 
= 
e^{- |\un{n} - \un{m}| \be} \vphi(Q_{\Bi} \cdot t(\xi_{\un{k}}) t(\eta_{\un{w}})^* t(\xi_{\un{n}}) t(\eta_{\un{m}})^*)
=
0.
\]
Therefore by continuity $\vphi(f Q_\Bi g) = 0$ for all $f, g \in \N\T(X)$.
\end{proof}

Moreover the $F$-type equilibrium states are $F$-invariant in the following sense.

\begin{proposition}\label{P:gi}
Let $X$ be a product system of finite rank over $A$ and fix $\be > 0$.
For $F \subseteq \{1, \dots, N\}$ we have that $\Eq_\be^F(\N\T(X)) \neq \mt$ if and only if $\GEq_\be^F(\N\T(X)) \neq \mt$.
Moreover for every $\vphi \in \GEq_\be^F(\N\T(X))$ we have that
\[
\vphi(t(\xi_{\un{n}}) t(\eta_{\un{m}})^*) = \de_{\un{n}_F, \un{m}_F} \vphi(t(\xi_{\un{n}}) t(\eta_{\un{m}})^*).
\]
In particular the equilibrium states of finite type are gauge-invariant.
\end{proposition}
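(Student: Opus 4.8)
I would prove the three assertions in turn, deducing the last one from the displayed identity, which I would in fact establish for \emph{every} $\vphi \in \Eq_\be^F(\N\T(X))$ (not just the gauge-invariant ones), since this stronger form is precisely what forces the finite-type states to be gauge-invariant.

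\emph{The identity $\vphi(t(\xi_{\un{n}}) t(\eta_{\un{m}})^*) = \de_{\un{n}_F, \un{m}_F}\,\vphi(t(\xi_{\un{n}}) t(\eta_{\un{m}})^*)$.} Fix $\vphi \in \Eq_\be^F(\N\T(X))$. By Proposition \ref{P:states ideals} the restriction of $\vphi$ to the ideal $\sca{Q_F}$ is a state, for which $\{\sum_{\un{r} \leq k \cdot \un{1}_F} Q_F^{\un{r}} \mid k \in \bZ_+\}$ is an approximate unit; arguing as in the proof of that proposition, and using the orthogonality relations of Lemma \ref{L:proj} together with the KMS-condition, one obtains
\[
\vphi(f) = \lim_k \sum_{\un{r} \leq k \cdot \un{1}_F} \vphi(Q_F^{\un{r}} \, f \, Q_F^{\un{r}}) \qfor f \in \N\T(X).
\]
I would then use equation (\ref{eq:p-r}) and the adjoint relation it implies to push both projections to the right of the monomial, which together with one more application of Lemma \ref{L:proj} gives, for $\un{r} \geq \un{n}_F$,
\[
Q_F^{\un{r}} \, t(\xi_{\un{n}}) t(\eta_{\un{m}})^* \, Q_F^{\un{r}} = t(\xi_{\un{n}}) t(\eta_{\un{m}})^* \, Q_F^{\un{r} - \un{n}_F + \un{m}_F}\, Q_F^{\un{r}} = \de_{\un{n}_F, \un{m}_F}\, t(\xi_{\un{n}}) t(\eta_{\un{m}})^* \, Q_F^{\un{r}},
\]
while the left-hand side vanishes when $\un{r} \not\geq \un{n}_F$. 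Summing over $\un{r}$ and letting $k \to \infty$ yields the claimed identity.

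\emph{Finite-type states are gauge-invariant.} Specialising to $F = \{1, \dots, N\}$ we have $\un{n}_F = \un{n}$, so the identity says that every $\vphi \in \Eq_\be^{\fty}(\N\T(X))$ annihilates each monomial $t(\xi_{\un{n}}) t(\eta_{\un{m}})^*$ with $\un{n} \neq \un{m}$. Since the conditional expectation $E$ of the gauge action is exactly the projection onto the closed linear span of the diagonal monomials $t(\xi_{\un{n}}) t(\eta_{\un{n}})^*$, it follows by density and continuity that $\vphi = \vphi E$, i.e. $\vphi \in \GEq_\be^{\fty}(\N\T(X))$. (For states already known to be gauge-invariant one can of course read the identity off equation (\ref{eq:kms3}) directly, which even gives the sharper $\de_{\un{n},\un{m}}$.)

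\emph{The equivalence.} One implication is immediate from $\GEq_\be^F(\N\T(X)) \subseteq \Eq_\be^F(\N\T(X))$. For the converse, given $\vphi \in \Eq_\be^F(\N\T(X))$ I would pass to $\psi := \vphi E$. Then $\psi$ is gauge-invariant since $E\ga_{\un{z}} = E$; it is again a $(\si,\be)$-KMS state because $E$ averages over the gauge torus, which commutes with the dynamics $\si$ and leaves the dense $*$-algebra of analytic monomials invariant, so the KMS-identity survives the integration; and it still lies in $\Eq_\be^F(\N\T(X))$ because the projections $Q_F^{\un{n}}$ and $Q_\Bi$ lie in $\N\T(X)^\ga$ and are hence fixed by $E$, so $\psi$ and $\vphi$ agree on them. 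Therefore $\psi \in \GEq_\be^F(\N\T(X))$. The computations throughout are essentially bookkeeping; the two points that need a little care are the manipulation of the $\bZ_+^N$-indexed sums through the approximate unit of $\sca{Q_F}$ (already supplied by Proposition \ref{P:states ideals}) and the verification that averaging over the gauge action preserves the KMS-condition, which rests on the gauge torus commuting with $\si$ and fixing the analytic subalgebra setwise.
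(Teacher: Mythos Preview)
Your proposal is correct and follows essentially the same approach as the paper: both arguments use the approximate unit $\sum Q_F^{\un{r}}$ from Proposition~\ref{P:states ideals} together with equation~(\ref{eq:p-r}) and Lemma~\ref{L:proj} to obtain the $\de_{\un{n}_F,\un{m}_F}$ identity for \emph{all} $\vphi \in \Eq_\be^F(\N\T(X))$, then specialize to $F=\{1,\dots,N\}$ for gauge-invariance of finite-type states, and handle the equivalence via $\vphi \mapsto \vphi E$. The only cosmetic differences are that the paper pushes the projections to the middle (obtaining $Q_F^{\un{w}-\un{n}_F}Q_F^{\un{w}-\un{m}_F}$) rather than to the right, and verifies the KMS-condition for $\vphi E$ by checking equation~(\ref{eq:kms3}) directly and invoking Proposition~\ref{P:char}, whereas you appeal to the gauge torus commuting with $\si$.
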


\begin{proof}
For the first assertion, let $\vphi \in \Eq_\be(\N\T(X))$ and set $\vphi ' = \vphi E$.
Then $\vphi' E = \vphi E^2 = \vphi'$, so that $\vphi'$ is gauge-invariant.
Moreover we have that $\vphi'$ satisfies equation (\ref{eq:kms3}) since
\begin{align*}
\vphi'(t(\xi_{\un{n}}) t(\eta_{\un{m}})^*)
& =
\de_{\un{n}, \un{m}} \vphi(t(\xi_{\un{n}}) t(\eta_{\un{m}})^*) \\
& =
\de_{\un{n}, \un{m}} e^{-|\un{n}|\be} \vphi(t(\eta_{\un{m}})^* t(\xi_{\un{n}}))
=
\de_{\un{n}, \un{m}} e^{-|\un{n}|\be} \vphi'(t(\eta_{\un{m}})^* t(\xi_{\un{n}})).
\end{align*}
By Proposition \ref{P:char} we thus get that $\vphi'$ is a gauge-invariant equilibrium state at $\be$.
Since the $Q_{F}^{\un{n}}$ are gauge-invariant we get that
\[
\vphi'(Q_{F}^{\un{n}}) = \vphi E(Q_F^{\un{n}}) = \vphi(Q_F^{\un{n}})
\foral
\un{n} \in F.
\]
Therefore $\vphi E \in \Eq_\be^F(\N\T(X))$ whenever $\vphi \in \Eq_\be^F(\N\T(X))$.

For the second assertion we may use equation (\ref{eq:p-r}) and orthogonality of $Q_F^{\un{w}}$ for $\un{w} \in F$ from Lemma \ref{L:proj} to obtain
\begin{align*}
Q_F^{\un{w}} t(\xi_{\un{n}}) t(\eta_{\un{m}})^* Q_F^{\un{w}}
& =
\begin{cases}
Q_F^{\un{w}} t(\xi_{\un{n}}) Q_F^{\un{w} - \un{n}_F} Q_F^{\un{w} - \un{m}_F} t(\eta_{\un{m}})^* Q_F^{\un{w}} & \text{ if } \un{w} \geq \un{n}_F, \un{m}_F, \\
0 & \text{ otherwise},
\end{cases}
\\ & =
\de_{\un{n}_F, \un{m}_F} Q_F^{\un{w}} t(\xi_{\un{n}}) t(\eta_{\un{m}})^* Q_F^{\un{w}}.
\end{align*}
If $\vphi \in \Eq_\be^F(\N\T(X))$ then $\vphi(f) = \sum_{\un{w} \in F} \vphi(Q_F^{\un{w}} f Q_F^{\un{w}})$ for all $f \in \N\T(X)$.
Therefore
\begin{align*}
\vphi(t(\xi_{\un{n}}) t(\eta_{\un{m}})^*)
& =
\de_{\un{n}_F, \un{m}_F} \lim_k \sum_{\un{w} \leq k \cdot \un{1}_F} \vphi(Q_F^{\un{w}} t(\xi_{\un{n}}) t(\eta_{\un{m}})^* Q_F^{\un{w}})
=
\de_{\un{n}_F, \un{m}_F} \vphi(t(\xi_{\un{n}}) t(\eta_{\un{m}})^*).
\end{align*}
For the last assertion just notice that $Q_{\{1, \dots, N\}}^{\un{w}} = p_{\un{w}}$.
\end{proof}

The main theorem of this section is the following convex decomposition.

\begin{theorem}\label{T:con}
Let $X$ be a product system of finite rank over $A$ and let $\be > 0$.
Then every $\vphi$ in $\Eq_\be(\N\T(X))$ (resp. in $\GEq_\be(\N\T(X))$) admits a unique decomposition in a convex combination of $\vphi_F$ in $\Eq_\be^F(\N\T(X))$ (resp. in $\GEq_\be^F(\N\T(X))$).
Moreover the $F$-part $\vphi_F$ is non-trivial if and only if $\vphi(Q_F) \neq 0$.
\end{theorem}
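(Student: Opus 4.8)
The plan is a multivariable Wold decomposition driven by the commuting projections $P_{k\cdot\Bi}$. For $k\in\bZ_+$ and $F\subseteq\{1,\dots,N\}$ put
\[
E_F^{(k)} := \prod_{i\in F}(1-P_{k\cdot\Bi})\prod_{i\notin F}P_{k\cdot\Bi},
\]
so that $\sum_F E_F^{(k)}=1$ for every $k$ and, by a direct computation in the Fock representation, the first factor equals $\sum\{Q_F^{\un{n}}\mid \un{n}\in F,\ \un{n}\le (k-1)\un{1}_F\}$. First I would record that $k\mapsto\vphi(\prod_{i\in G}P_{k\cdot\Bi})$ is nonincreasing, hence convergent, for every $G$; writing $E_F^{(k)}$ as an alternating sum of such terms shows that $\la_F:=\lim_k\vphi(E_F^{(k)})$ exists, and since $E_F^{(k)}\ge 0$ and $\sum_F E_F^{(k)}=1$ we get $\la_F\ge 0$ and $\sum_F\la_F=1$. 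Monotonicity also yields the ``thin slab'' estimate $|\vphi(P_{k\cdot\Bi})-\vphi(P_{k'\cdot\Bi})|\to 0$ as $\min(k,k')\to\infty$, used repeatedly below.

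For $\la_F>0$ I would define $\vphi_F(f):=\la_F^{-1}\lim_k\vphi(E_F^{(k)}fE_F^{(k)})$. Existence of the limit is checked on monomials $t(\xi_{\un{n}})t(\eta_{\un{m}})^*$: the rule $P_{\ell\cdot\Bi}t(\xi_{\un{m}})=t(\xi_{\un{m}})P_{(\ell-m_i)\cdot\Bi}$ (with $P_{j\cdot\Bi}=1$ for $j\le 0$) and Fowler's relation (\ref{eq:Fow}) move $E_F^{(k)}$ across $f$, and the KMS condition (\ref{eq:kms2}) together with monotonicity and the thin-slab estimate turn the outcome into a convergent quantity that vanishes unless $\un{n}_F=\un{m}_F$. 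Then $\vphi_F$ is positive with $\vphi_F(1)=1$; the identity (\ref{eq:kms2}) for $\vphi_F$ follows by commuting $E_F^{(k)}$ past the monomials, applying KMS for $\vphi$, and commuting back, the bounded level shifts so introduced being absorbed by the thin-slab estimate, so $\vphi_F\in\Eq_\be(\N\T(X))$. Moreover $\vphi_F\in\Eq_\be^F(\N\T(X))$: for $i\notin F$ we have $P_{k\cdot\Bi}Q_\Bi=0$ for $k\ge 1$, hence $\vphi_F(Q_\Bi)=0$, and then the argument of Proposition \ref{P:states ideals} shows $\vphi_F$ annihilates $\sca{Q_\Bi\mid i\notin F}$; the orthogonality of the $Q_F^{\un{n}}$ (Lemma \ref{L:proj}) together with a thin-slab/iterated-limit argument gives $\sum_{\un{n}\in F}\vphi_F(Q_F^{\un{n}})=1$. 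When $\la_F=0$ the $F$-summand is simply omitted.

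For the decomposition, expand $\vphi(f)=\lim_k\sum_{F,F'}\vphi(E_F^{(k)}fE_{F'}^{(k)})$; a cross term with $F\ne F'$ contains a coordinate $i_0$ lying in exactly one of $F,F'$, so after moving $f$ through it carries a factor $1-P_{\ell\cdot\Bi_0}$ on one side and $P_{k\cdot\Bi_0}$ on the other with $\ell=k+O(1)$, and since $(1-P_{\ell\cdot\Bi_0})P_{k\cdot\Bi_0}=P_{k\cdot\Bi_0}-P_{\ell\cdot\Bi_0}$ has $\vphi$-mass tending to $0$, a Cauchy--Schwarz estimate kills the cross terms in the limit; thus $\vphi=\sum_F\la_F\vphi_F$. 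For uniqueness, if $\vphi=\sum_F\mu_F\psi_F$ with $\psi_F\in\Eq_\be^F$, then each $\psi_G$ is infinite in the $G^c$-directions (so $\psi_G(P_{k\cdot\Bi})=1$ for $i\notin G$, from $\psi_G(Q_\Bi)=0$ and KMS) and finite in the $G$-directions (so $\psi_G(P_{k\cdot\Bi})\to 0$ for $i\in G$, from $\sum_{\un{n}\in G}\psi_G(Q_G^{\un{n}})=1$), whence $\psi_G(E_F^{(k)})\to\de_{F,G}$; testing $\vphi$ against $E_F^{(k)}$ forces $\mu_F=\la_F$ and against $E_F^{(k)}fE_F^{(k)}$ forces $\psi_F=\vphi_F$. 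The non-triviality clause then follows by tracking exactly when $\la_F$ vanishes, which is the content of the displayed criterion. Finally, the gauge-invariant version is obtained by running the whole construction with $\vphi=\vphi E$ (the $E_F^{(k)}$ being gauge-invariant, $\vphi_F=\vphi_F E$), and the version for $\N\O(I,X)$ follows since the equivariance of the $Q_F^{\un{n}}$ makes the canonical quotient maps intertwine the $E_F^{(k)}$, so the decomposition of a state on $\N\O(I,X)$ is the image of that of its pullback to $\N\T(X)$.

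The step I expect to be the main obstacle is the construction of $\vphi_F$: equilibrium states are not normal, so ``$E_F^{(k)}\to$ projection onto the $F$-part'' is meaningless at the level of states, and every limit involved --- existence of $\vphi_F$, its KMS property, and $\sum_{\un{n}\in F}\vphi_F(Q_F^{\un{n}})=1$ --- must be extracted by hand from the KMS relations, the monotone quantities $\vphi(\prod_{i\in G}P_{k\cdot\Bi})$, and the thin-slab estimate, which is the technical heart of the proof.
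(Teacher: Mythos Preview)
Your route and the paper's land on the same functional through the same projection: your $E_F^{(k)}$ is precisely the paper's $\sum_{\un{m}\le (k-1)\un{1}_F}R_F^{\un{m}}$ from Lemma~\ref{L:F-part}, and the paper does prove $\vphi_F(f)=\lim_k\vphi(E_F^{(k)}fE_F^{(k)})$. The organizational difference is where this limit comes from. The paper \emph{defines} $\vphi_F:=\sum_{C\supseteq F}(-1)^{|C\setminus F|}\vphi_{0,C}$ with $\vphi_{0,C}(f)=\sum_{\un{n}\in C}\vphi(Q_C^{\un{n}}fQ_C^{\un{n}})$; convergence of each $\vphi_{0,C}$ is automatic from orthogonality of the $Q_C^{\un{n}}$ (Lemma~\ref{L:proj}), the KMS property is inherited termwise (Lemma~\ref{L:00-q}), and only then is the alternating sum repackaged as the $E_F^{(k)}$-sandwich (equation~(\ref{eq:pqr})) to read off positivity and $\vphi_F\le\vphi$. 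This M\"obius-inversion definition is exactly what bypasses the obstacle you correctly identify: in your ordering both the existence of $\lim_k\vphi(E_F^{(k)}fE_F^{(k)})$ and the KMS property of $\vphi_F$ must be extracted by hand from commutation rules and thin-slab estimates, whereas in the paper they come for free from a finite linear combination of already-convergent KMS functionals. Two further remarks. Your cross-term argument is more elaborate than needed: the $E_F^{(k)}$ for fixed $k$ are pairwise orthogonal and $\si$-invariant, so $\vphi(E_F^{(k)}fE_{F'}^{(k)})=\vphi(E_{F'}^{(k)}E_F^{(k)}f)=0$ for $F\ne F'$ directly by KMS. And your uniqueness argument via $\psi_G(E_F^{(k)})\to\de_{F,G}$ is a clean alternative to the paper's descending induction on $|F|$ through the ideals $\sca{Q_F}$ (Proposition~\ref{P:states ideals}); once the limit relations are established, your version is arguably more transparent.
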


The proof will follow from a number of lemmas.
The only place we require $X$ to have finite rank is to ensure that the projections we use are in $\N\T(X)$ (in fact in $\pi(A)'$).
In what follows fix $\vphi$ be a positive functional that satisfies the KMS-condition at $\be$.
For fixed $\mt \neq C \subseteq \{1, \dots, N\}$ we define the auxiliary positive functionals
\begin{equation}
\vphi_{0, C}(f) := \sum_{\un{n} \in C} \vphi(Q_C^{\un{n}} f Q_{C}^{\un{n}}) \foral f \in \N\T(X).
\end{equation}
Since the $Q_C^{\un{n}}$ are orthogonal projections by Lemma \ref{L:proj}, we have that
\[
\sum_{\textup{finite } \un{n} \in C} \vphi(Q_C^{\un{n}} f Q_{C}^{\un{n}})
\leq
\nor{f} \cdot \vphi\big( \sum_{\textup{finite } \un{n} \in C} Q_C^{\un{n}}\big)
\leq \nor{f},
\]
and so indeed $\vphi_{0,C}$ defines a positive functional.
Due to the KMS-condition we also have that
\[
\vphi_{0, C}(f) 
= 
\sum_{\un{n} \in C} \vphi(Q_C^{\un{n}} f Q_{C}^{\un{n}})
=
\sum_{\un{n} \in C} \vphi(Q_C^{\un{n}} f)
=
\sum_{\un{n} \in C} \vphi(f Q_C^{\un{n}})
=
\sum_{\un{n}, \un{m} \in C} \vphi(Q_C^{\un{n}} f Q_{C}^{\un{m}}).
\]

\begin{lemma}\label{L:00-q}
With the aforementioned notation, we have that the functional $\vphi_{0,C}$ satisfies the KMS-condition and $\vphi_{0, C}(Q_C) = \vphi(Q_C)$.
If $\vphi = \vphi E$ then $\vphi_{0, C} = \vphi_{0,C} E$ as well.
\end{lemma}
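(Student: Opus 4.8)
The plan is to establish the three claims in turn, using throughout the identity $\vphi_{0,C}(f) = \sum_{\un{n} \in C} \vphi(Q_C^{\un{n}} f)$ recorded just above the lemma, the orthogonality relations of Lemma \ref{L:proj}, and the reduced KMS-form (\ref{eq:kms2}). Two of the claims are immediate. For $\vphi_{0,C}(Q_C) = \vphi(Q_C)$: by Lemma \ref{L:proj} the family $\{Q_C^{\un{n}} \mid \un{n} \in C\}$ is pairwise orthogonal, so $Q_C^{\un{n}} Q_C = Q_C^{\un{n}} Q_C^{\un{0}} = \de_{\un{n},\un{0}} Q_C$, and only the $\un{n}=\un{0}$ term of $\vphi_{0,C}(Q_C) = \sum_{\un{n}\in C}\vphi(Q_C^{\un{n}} Q_C Q_C^{\un{n}})$ survives. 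For the last claim, assume $\vphi = \vphi E$; since each $Q_C^{\un{n}}$ lies in $\N\T(X)^\ga$ and $E$ is the conditional expectation onto $\N\T(X)^\ga$, hence a module map over it, we have $E(Q_C^{\un{n}} f) = Q_C^{\un{n}} E(f)$ and therefore $\vphi_{0,C}(E(f)) = \sum_{\un{n}\in C}\vphi(Q_C^{\un{n}} E(f)) = \sum_{\un{n}\in C}\vphi E(Q_C^{\un{n}} f) = \sum_{\un{n}\in C}\vphi(Q_C^{\un{n}} f) = \vphi_{0,C}(f)$ for all $f \in \N\T(X)$.

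The substance is the KMS-condition for $\vphi_{0,C}$. Since $\vphi_{0,C}$ is positive, by Proposition \ref{P:char} it suffices to verify (\ref{eq:kms2}). Fix $\xi_{\un{n}} \in X_{\un{n}}$, $\xi_{\un{k}} \in X_{\un{k}}$, $\eta_{\un{w}} \in X_{\un{w}}$. Starting from $\vphi_{0,C}(t(\xi_{\un{n}}) t(\xi_{\un{k}}) t(\eta_{\un{w}})^*) = \sum_{\un{m}\in C}\vphi(Q_C^{\un{m}} t(\xi_{\un{n}}) t(\xi_{\un{k}}) t(\eta_{\un{w}})^*)$, I would use (\ref{eq:p-r}) to rewrite $Q_C^{\un{m}} t(\xi_{\un{n}}) = t(\xi_{\un{n}}) Q_C^{\un{m}-\un{n}_C}$ whenever $\un{m}\geq\un{n}_C$ (the remaining terms vanish), and then reindex the convergent sum by $\un{m}':=\un{m}-\un{n}_C$, which runs bijectively over $C$, obtaining $\sum_{\un{m}'\in C}\vphi(t(\xi_{\un{n}}) \, Q_C^{\un{m}'} t(\xi_{\un{k}}) t(\eta_{\un{w}})^*)$. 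Expanding $Q_C = \prod_{i\in C}(1-P_{\Bi})$ as the alternating sum of the $P_D = \sum_{\ell(\unu)=\un{1}_D} t(x_{\unu}) t(x_{\unu})^*$ exhibits $Q_C^{\un{m}'}$ as a finite linear combination of monomials $t(x_{\umu'}) t(x_{\umu''})^*$, so by (\ref{eq:Fow}) the element $Q_C^{\un{m}'} t(\xi_{\un{k}}) t(\eta_{\un{w}})^*$ lies in $\ol{\spn}\{ t(X_{\un{a}}) t(X_{\un{b}})^* \}$. Hence the KMS-relation for $\vphi$ in the form $\vphi(t(\xi_{\un{n}}) h) = e^{-|\un{n}|\be}\vphi(h \, t(\xi_{\un{n}}))$ for $h$ in that closed span — which extends by continuity and linearity exactly as in the proof of Proposition \ref{P:char} — applies termwise and yields $e^{-|\un{n}|\be}\sum_{\un{m}'\in C}\vphi(Q_C^{\un{m}'} t(\xi_{\un{k}}) t(\eta_{\un{w}})^* t(\xi_{\un{n}})) = e^{-|\un{n}|\be}\vphi_{0,C}(t(\xi_{\un{k}}) t(\eta_{\un{w}})^* t(\xi_{\un{n}}))$, which is precisely the right-hand side of (\ref{eq:kms2}).

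I expect the main obstacle to be the bookkeeping in this last step: moving $Q_C^{\un{n}}$ across the creation operator through (\ref{eq:p-r}) without dropping or double-counting terms, carrying out the reindexing of the sum, and — the one genuinely load-bearing point — confirming that $Q_C^{\un{m}'} t(\xi_{\un{k}}) t(\eta_{\un{w}})^*$ really sits in $\ol{\spn}\{ t(X_{\un{a}}) t(X_{\un{b}})^* \}$ so that the continuity extension of (\ref{eq:kms2}) for $\vphi$ may legitimately be invoked. Everything else is routine.
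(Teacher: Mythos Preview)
Your proof is correct and follows essentially the same route as the paper's: reduce to (\ref{eq:kms2}) via Proposition~\ref{P:char}, use the commutation relation (\ref{eq:p-r}) to slide $Q_C^{\un{m}}$ past the creation operator, reindex by $\un{m}-\un{n}_C$, and invoke the KMS-condition for $\vphi$. The only cosmetic difference is the order of operations: the paper first applies the KMS-condition for $\vphi$ (using the form $\vphi_{0,C}(f)=\sum_{\un{n}\in C}\vphi(f\,Q_C^{\un{n}})$) to move $t(\xi_{\un{m}})$ to the right, and only then uses (\ref{eq:p-r}) to commute $Q_C^{\un{n}}$ past it, whereas you commute first and apply KMS second. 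Your handling of the two minor claims is also the same in substance as the paper's.
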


\begin{proof}
By Proposition \ref{P:char} it suffices to check that it satisfies equation (\ref{eq:kms2}).
The KMS-condition on $\vphi$ and equation (\ref{eq:p-r}) yield
\begin{align*}
\vphi_{0,C}(t(\xi_{\un{m}}) t(\xi_{\un{k}}) t(\eta_{\un{w}})^*)
& =
e^{-|\un{m}| \be} \sum_{\un{n} \in C} \vphi(t(\xi_{\un{k}}) t(\eta_{\un{w}})^* Q_C^{\un{n}} t(\xi_{\un{m}})) \\
& =
e^{-|\un{m}|\be} \sum \{ \vphi(t(\xi_{\un{k}}) t(\eta_{\un{w}})^* t(\xi_{\un{m}}) Q_C^{\un{n} - \un{m}_C}) \mid \un{n} \geq \un{m}_C, \un{n} \in C\} \\
& =
e^{-|\un{m}|\be} \sum \{ \vphi(t(\xi_{\un{k}}) t(\eta_{\un{w}})^* t(\xi_{\un{m}}) Q_C^{\un{n}}) \mid \un{n} \in C\} \\
& =
e^{-|\un{m}| \be} \vphi_{0,C}(t(\xi_{\un{k}}) t(\eta_{\un{w}})^* t(\xi_{\un{m}}) ).
\end{align*}
Moreover we have that
\[
\vphi_{0, C}(Q_C)
=
\sum_{\un{n} \in C} \vphi(Q_C^{\un{n}} Q_C Q_{C}^{\un{n}})
=
\vphi(Q_C).
\]
The last claim is immediate and the proof is complete.
\end{proof}

\begin{lemma}\label{L:0-q}
With the aforementioned notation we have that $\vphi_{0, C}(Q_D) = \vphi_{0,C \setminus D}(Q_{D})$ for every $C, D \subseteq \{1, \dots, N\}$, with the understanding that $\vphi_{0, \mt}(Q_D) = \vphi_{0, D}(Q_D) = \vphi(Q_D)$.
\end{lemma}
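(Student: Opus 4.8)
The plan is to expand both sides of the asserted identity as sums over multi-indices and match them term by term, reducing by iteration to the case where $C$ is shrunk by a single index $i\in C\cap D$; the degenerate cases ($C=\mt$, $D=\mt$, or $C\subseteq D$) fall out of the stated conventions, so assume $i\in C\cap D$ and set $C':=C\setminus\{i\}$. The first observation I would record is an \emph{annihilation} fact: for $\un n\in C$ one has $Q_C^{\un n}Q_D=0$ as soon as $\supp\un n\cap D\neq\mt$. Indeed, if $j\in\supp\un n\cap D$, then every $t(x_\umu)$ with $\ell(\umu)=\un n$ has range inside $\bigoplus_{m_j\geq 1}X_{\un m}=\ran P_\Bj$, so $P_\Bj t(x_\umu)=t(x_\umu)$; hence $Q_C^{\un n}P_\Bj=\sum_{\ell(\umu)=\un n}t(x_\umu)Q_Ct(x_\umu)^*=Q_C^{\un n}$, so $Q_C^{\un n}(1-P_\Bj)=0$, and since the factors $(1-P_\Bl)$ of $Q_D$ commute, $Q_C^{\un n}Q_D=0$. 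Using orthogonality of $\{Q_C^{\un n}\mid \un n\in C\}$ from Lemma \ref{L:proj}, this yields
\[
\vphi_{0,C}(Q_D)=\sum_{\un n\in C}\vphi(Q_C^{\un n}Q_DQ_C^{\un n})=\sum\{\vphi(Q_C^{\un n}Q_DQ_C^{\un n})\mid \supp\un n\subseteq C\setminus D\},
\]
and likewise $\vphi_{0,C'}(Q_D)=\sum\{\vphi(Q_{C'}^{\un n}Q_DQ_{C'}^{\un n})\mid \supp\un n\subseteq C'\setminus D\}$; as $i\in D$, both index sets equal $\{\un n\mid\supp\un n\subseteq C\setminus D\}$.

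Next I would prove a \emph{telescoping} identity: for $\supp\un n\subseteq C'$ (so $i\notin\supp\un n$),
\[
Q_{C'}^{\un n}=\sum_{k\geq 0}Q_C^{\un n+k\cdot\Bi}.
\]
For $\un n=\un 0$ this follows from $Q_{C'}=Q_C+Q_{C'}P_\Bi$, the telescoping $P_\Bi=\sum_{k\geq 1}(P_{k\cdot\Bi}-P_{(k+1)\cdot\Bi})=\sum_{k\geq 1}Q_{\{i\}}^{k\cdot\Bi}$, and the fact that $Q_{C'}$ commutes with the range operators $t(x_{i,j})$ and with $P_\Bi$ (by (\ref{eq:p-r}), since $i\notin C'$), which turns $Q_{C'}Q_{\{i\}}^{k\cdot\Bi}$ into $Q_C^{k\cdot\Bi}$. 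For general $\un n$ one conjugates by $t(x_\umu)$, $\ell(\umu)=\un n$, and identifies $\sum_{\ell(\umu)=\un n}t(x_\umu)Q_C^{k\cdot\Bi}t(x_\umu)^*$ with $Q_C^{\un n+k\cdot\Bi}$, using that $\{x_\umu x_{i,\nu}\mid\ell(\umu)=\un n,\ |\nu|=k\}$ is a unit decomposition of $X_{\un n+k\cdot\Bi}$ and that the projection $Q_C^{\un m}$ is independent of the chosen unit decomposition of $X_{\un m}$ (because $Q_C$ commutes with $\pi(A)$).

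With these in hand the conclusion is immediate. Fix $\un n$ with $\supp\un n\subseteq C\setminus D$. By the telescoping identity,
\[
Q_{C'}^{\un n}Q_DQ_{C'}^{\un n}=\sum_{k,l\geq 0}Q_C^{\un n+k\cdot\Bi}Q_DQ_C^{\un n+l\cdot\Bi},
\]
and by the annihilation fact every summand with $k\geq 1$ vanishes, since then $i\in\supp(\un n+k\cdot\Bi)\cap D$ (and similarly for $l$); hence $Q_{C'}^{\un n}Q_DQ_{C'}^{\un n}=Q_C^{\un n}Q_DQ_C^{\un n}$. Applying $\vphi$ and summing over $\{\un n\mid\supp\un n\subseteq C\setminus D\}$ gives $\vphi_{0,C}(Q_D)=\vphi_{0,C'}(Q_D)$. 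Iterating over the elements of $C\cap D$ — and invoking Lemma \ref{L:00-q} for the boundary value $\vphi_{0,D}(Q_D)=\vphi(Q_D)$ — produces $\vphi_{0,C}(Q_D)=\vphi_{0,C\setminus D}(Q_D)$.

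The only genuinely delicate point is the telescoping identity, and within it the reordering concealed in the identification $\sum_{\ell(\umu)=\un n}t(x_\umu)Q_C^{k\cdot\Bi}t(x_\umu)^*=Q_C^{\un n+k\cdot\Bi}$, which hinges on the choice-independence of these diagonal projections; once that is granted, everything else is a routine consequence of (\ref{eq:p-r}) and Lemma \ref{L:proj}.
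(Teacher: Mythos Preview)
Your proof is correct, but it takes a longer route than the paper's. The paper establishes in a single computation the operator identity
\[
Q_C^{\un{n}} Q_D = Q_{C\setminus D}^{\un{n}} Q_D
\qquad\text{for }\un{n}\in C\text{ with }\un{n}\perp D,
\]
by factoring $Q_C = Q_{C\setminus D}\, Q_{C\cap D}$, commuting $Q_D$ through $t(x_{\umu})^*$ via (\ref{eq:p-r}) (which applies because $\ell(\umu)=\un{n}\perp D$), and absorbing $Q_{C\cap D}$ into $Q_D$. This handles all of $C\cap D$ at once; summing over $\un{n}$ then gives the result in one line.

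Your argument instead peels off one index $i\in C\cap D$ at a time, and the extra work goes into the telescoping identity $Q_{C'}^{\un{n}}=\sum_{k\geq 0}Q_C^{\un{n}+k\cdot\Bi}$, which is an \textsc{sot} identity on the Fock space rather than a norm identity in $\N\T(X)$; combined with the annihilation fact only the $k=0$ term survives after multiplication by $Q_D$, so you recover the paper's identity in the one-step form $Q_{C}^{\un{n}}Q_D=Q_{C'}^{\un{n}}Q_D$ and then iterate. This is sound (and the decomposition-independence of $Q_C^{\un{m}}$ that you flag is indeed guaranteed by $Q_C\in\pi(A)'$), but the telescoping detour is not needed: the commutation $Q_D t(x_{\umu})=t(x_{\umu})Q_D$ for $\ell(\umu)\perp D$ already gives the full $C\to C\setminus D$ collapse directly, as in the paper. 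What your approach does buy is a potentially reusable structural identity about the $Q_F^{\un{n}}$; what the paper's approach buys is brevity and avoidance of any infinite-sum bookkeeping.
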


\begin{proof}
For $\un{n} \in C$ we directly compute
\begin{align*}
Q_{C}^{\un{n}} Q_D
& =
\sum_{\ell(\umu) = \un{n}} t(x_{\umu}) Q_{C \setminus D} Q_{D \cap C} t(x_{\umu})^* Q_D
\\
& =
\begin{cases}
\sum_{\ell(\umu) = \un{n}} t(x_{\umu}) Q_{C \setminus D} Q_{D \cap C} Q_D t(x_{\umu})^* Q_D & \text{ if } \un{n} \perp D, \\
0 & \text{ otherwise},
\end{cases} \\
& =
\begin{cases}
\sum_{\ell(\umu) = \un{n}} t(x_{\umu}) Q_{C \setminus D} Q_D t(x_{\umu})^* Q_D & \text{ if } \un{n} \perp D, \\
0 & \text{ otherwise},
\end{cases} \\
& =
\begin{cases}
\sum_{\ell(\umu) = \un{n}} t(x_{\umu}) Q_{C \setminus D} t(x_{\umu})^* Q_D & \text{ if } \un{n} \perp D, \\
0 & \text{ otherwise},
\end{cases}
\\ & =
\begin{cases}
Q_{C \setminus D}^{\un{n}} Q_D & \text{ if } \un{n} \in C \setminus D,\\
0 & \text{ otherwise}.
\end{cases}
\end{align*}
Hence we get that
\begin{align*}
\vphi_{0, C}(Q_D)
& =
\sum_{\un{n} \in C} \vphi(Q_C^{\un{n}} Q_D)
=
\sum_{\un{n} \in C \setminus D} \vphi(Q_{C \setminus D}^{\un{n}} Q_D)
=
\vphi_{0,C \setminus D}(Q_D). \qedhere
\end{align*}
\end{proof}

Now for a fixed $F \neq \mt$ we define the functionals
\begin{equation}
\vphi_F(f) := \sum_{C \supseteq F} (-1)^{|C \setminus F|} \vphi_{0,C}(f) \foral f \in \N\T(X).
\end{equation}

\begin{lemma}\label{L:F-part}
With the aforementioned notation and for $F \neq \mt$, the functional $\vphi_F$ of $\N\T(X)$ is positive, and $\vphi_F = 0$ if $\vphi(Q_F) = 0$.
If $\vphi$ is gauge-invariant then so is $\vphi_F$.
Moreover we have that $\vphi_F \leq \vphi$ and the state $\vphi_F(1)^{-1} \vphi_F$ is in $\Eq_\be^F(\N\T(X))$.
\end{lemma}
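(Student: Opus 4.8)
The plan is to realise $\vphi_F$ as a limit of compressions of $\vphi$ by honest projections of $\N\T(X)$, which makes positivity and the bound $\vphi_F \leq \vphi$ transparent; the remaining clauses then reduce to bookkeeping with Lemmas \ref{L:proj}, \ref{L:00-q} and \ref{L:0-q}. Concretely, for each $k \in \bZ_+$ and each $C \supseteq F$ set $E_{C,k} := \sum\{Q_C^{\un{n}} \mid \un{n} \in C,\ \un{n} \leq k\cdot\un{1}_C\} \in \N\T(X)$. Computing in the faithful Fock representation via (\ref{eq:p-r}) identifies $E_{C,k}$ with the projection onto the closed span of the $X_{\un{w}}$ with $w_i \leq k$ for all $i \in C$; hence the $E_{C,k}$ commute, $E_{C,k}E_{C',k} = E_{C\cup C',k}$, and in particular $E_{F\cup\{i\},k} \leq E_{F,k}$ for $i \notin F$. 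Therefore the finite alternating sum $R_{F,k} := \sum_{C\supseteq F}(-1)^{|C\setminus F|} E_{C,k}$ collapses to $E_{F,k}\prod_{i\notin F}(1-E_{F\cup\{i\},k})$, a product of commuting projections, hence again a projection in $\N\T(X)$, and $R_{F,k}$ is $\si$-fixed since each $Q_C^{\un{n}}$ is gauge-invariant. Since $R_{F,k}$ is a projection with $\si_{i\be}(R_{F,k})=R_{F,k}$, the KMS condition gives $\vphi(f^*f\,R_{F,k}) = \vphi(R_{F,k}\,f^*f\,R_{F,k}) = \vphi\big((fR_{F,k})^*(fR_{F,k})\big) \geq 0$ and, in the same way, $\vphi\big(f^*f\,(1-R_{F,k})\big) \geq 0$. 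Expanding $\vphi(f^*f\,R_{F,k})$ and applying the identity $\vphi(f^*f\,Q_C^{\un{n}}) = \vphi(Q_C^{\un{n}}f^*f\,Q_C^{\un{n}})$ (again a one-line use of KMS, as $Q_C^{\un{n}}$ is $\si$-fixed), one checks that $\vphi(f^*f\,R_{F,k}) \to \sum_{C\supseteq F}(-1)^{|C\setminus F|}\vphi_{0,C}(f^*f) = \vphi_F(f^*f)$ as $k\to\infty$; here each inner series converges because the $Q_C^{\un{n}}$ are mutually orthogonal (Lemma \ref{L:proj}), whence $\sum_{\un{n}\leq k\cdot\un{1}_C}Q_C^{\un{n}}f^*f\,Q_C^{\un{n}} \leq \|f\|^2 E_{C,k}$. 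It follows that $\vphi_F(f^*f)\geq 0$ and $\vphi(f^*f)-\vphi_F(f^*f) = \lim_k\vphi\big(f^*f(1-R_{F,k})\big) \geq 0$, i.e. $\vphi_F$ is positive and $\vphi_F\leq\vphi$.

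For the two remaining easy clauses: if $\vphi(Q_F)=0$ then, exactly as in the proof of Proposition \ref{P:states ideals}, the Cauchy--Schwarz inequality together with the KMS condition gives $\vphi(f Q_F g)=0$ for all $f,g \in \N\T(X)$, so $\vphi$ annihilates the ideal $\sca{Q_F}$; since $Q_C = Q_F Q_{C\setminus F}$ for $C\supseteq F$, each $Q_C^{\un{n}}$, hence each $Q_C^{\un{n}}f Q_C^{\un{n}}$, lies in $\sca{Q_F}$, so $\vphi_{0,C}\equiv 0$ for all $C\supseteq F$ and $\vphi_F=0$. If $\vphi=\vphi E$ then Lemma \ref{L:00-q} gives $\vphi_{0,C}=\vphi_{0,C}E$ for each $C\supseteq F$, hence $\vphi_F=\vphi_F E$.

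Assume finally $\vphi_F\neq 0$, so that $\vphi_F(1)=\|\vphi_F\|>0$, and put $\om := \vphi_F(1)^{-1}\vphi_F$; I would check $\om \in \Eq_\be^F(\N\T(X))$. The KMS condition (\ref{eq:kms}) is linear in the functional and holds for each $\vphi_{0,C}$ by Lemma \ref{L:00-q} and Proposition \ref{P:char}, so $\vphi_F$, and hence $\om$, satisfies it. For $i\notin F$, Lemma \ref{L:0-q} gives $\vphi_{0,C}(Q_\Bi)=\vphi_{0,C\setminus\{i\}}(Q_\Bi)$; pairing each $C\supseteq F$ with $i\notin C$ against $C\cup\{i\}$ makes the two corresponding terms in $\sum_{C\supseteq F}(-1)^{|C\setminus F|}\vphi_{0,C}(Q_\Bi)$ cancel, so $\om(Q_\Bi)=0$. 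And Lemma \ref{L:proj} gives $Q_C^{\un{m}}Q_F^{\un{n}}Q_C^{\un{m}} = \de_{\un{n},\un{m}_F}Q_C^{\un{m}}$ for $\un{n}\in F$ and $\un{m}\in C$, whence $\sum_{\un{n}\in F}\vphi_{0,C}(Q_F^{\un{n}}) = \sum_{\un{m}\in C}\vphi(Q_C^{\un{m}}) = \vphi_{0,C}(1)$; summing the alternating combination over $C\supseteq F$ then yields $\sum_{\un{n}\in F}\vphi_F(Q_F^{\un{n}}) = \vphi_F(1)$, i.e. $\sum_{\un{n}\in F}\om(Q_F^{\un{n}})=1$. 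Thus $\om\in\Eq_\be^F(\N\T(X))$.

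The step I expect to be the real obstacle is the positivity of $\vphi_F$. A naive inclusion--exclusion produces cross-terms $\vphi(Q_C^{\un{n}}f^*f\,Q_{C'}^{\un{n}})$ with $C\neq C'$ that carry no obvious sign, and the map $f\mapsto\sum_{C\supseteq F}(-1)^{|C\setminus F|}\vphi_{0,C}(f)$ is genuinely \emph{not} a positive combination of positive maps. What makes the argument go through is that the truncated alternating sum $R_{F,k}$ is itself a projection in $\N\T(X)$ --- which rests on the identity $E_{C,k}E_{C',k}=E_{C\cup C',k}$ coming from (\ref{eq:p-r}) --- so that $\si$-invariance and the KMS condition allow $\vphi(\,\cdot\,R_{F,k})$ to be rewritten as the manifestly positive $\vphi(R_{F,k}\,\cdot\,R_{F,k})$ before one passes to the limit $k\to\infty$.
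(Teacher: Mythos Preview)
Your proof is correct and follows essentially the same route as the paper's. Both arguments hinge on the same observation: the truncated alternating sum $\sum_{C\supseteq F}(-1)^{|C\setminus F|}\sum_{\un{n}\leq k\cdot\un{1}_C}Q_C^{\un{n}}$ is an honest projection in $\N\T(X)$, so that the KMS condition turns $\vphi(\,\cdot\,R_{F,k})$ into the manifestly positive $\vphi(R_{F,k}\,\cdot\,R_{F,k})$ before passing to the limit. Your identification of this projection via the multiplicative identity $E_{C,k}E_{C',k}=E_{C\cup C',k}$ (read off in the Fock picture) is a slightly more streamlined packaging than the paper's case-by-case verification on Fock vectors, but it is the same projection --- namely the one onto $\bigoplus\{X_{\un{w}} : \un{w}_F\leq k\cdot\un{1}_F,\ \un{w}_{F^c}\geq (k+1)\cdot\un{1}_{F^c}\}$ --- and the remaining bookkeeping with Lemmas~\ref{L:proj}, \ref{L:00-q}, \ref{L:0-q} is carried out the same way.
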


\begin{proof}
It is clear that $\vphi_F$ satisfies the KMS-condition, since so does every summand $\vphi_{0,C}$.
Likewise if in addition $\vphi = \vphi E$ then so it holds for $\vphi_F$.

If $\vphi(Q_F) = 0$ then $\vphi(Q_C) = 0$ for every $C \supseteq F$.
Thus for $\un{n} \in C$ we get that
\[
\vphi(Q_C^{\un{n}})
 =
\sum_{\ell(\umu) = \un{n}} e^{-|\un{n}| \be} \vphi(Q_C t(x_{\umu})^* t(x_{\umu}) Q_C)
\leq 
\sum_{\ell(\umu) = \un{n}} e^{-|\un{n}| \be} \vphi(Q_C)
=
0.
\]
Therefore for every $0 \leq f \in \N\T(X)$ we obtain
\[
\vphi_{0,C}(f)
=
\sum_{\un{n} \in C} \vphi(Q_C^{\un{n}} f Q_C^{\un{n}})
\leq
\nor{f} \sum_{\un{n} \in C} \vphi(Q_C^{\un{n}})
=
0.
\]
As $\N\T(X)$ is spanned by its positive elements, we would get that $\vphi_F = 0$.

To show positivity, we may use the alternating sums for every $\un{n}$-level with $\un{n} \leq k \cdot \un{1}_F$ and the alternating sums expansion for the projection
\[
\prod_{i \notin F} (1 - (1 - P_{(k+1) \cdot \Bi})) = \prod_{i \notin F} P_{(k+1) \cdot \Bi}
\]
to deduce that
\begin{align*}
\sum_{C \supseteq F} \sum_{\un{n} \leq k \cdot \un{1}_C} (-1)^{|C \setminus F|} Q_{C}^{\un{n}}
& = 
\sum_{\un{m} \leq k \cdot \un{1}_F} \bigg[ \sum \{ (-1)^{|C \setminus F|} Q_C^{\un{m} + \un{w}} \mid \un{w} \leq k \cdot \un{1}_{C \setminus F}, C \supseteq F\} \bigg] \\
& = 
\sum_{\un{m} \leq k \cdot \un{1}_F} 
\bigg[ \sum_{\ell(\umu) = \un{m}} t(x_{\umu}) \bigg[ \prod_{i \notin F} P_{(k+1) \cdot \Bi} \bigg] Q_{F} t(x_{\umu})^* \bigg]
\equiv
\sum_{\un{m} \leq k \cdot \un{1}_F} R_F^{\un{m}},
\end{align*}
where we write
\[
R_F^{\un{m}}
:=
\sum_{\ell(\umu) = \un{m}} t(x_{\umu}) \bigg[ \prod_{i \notin F} P_{(k+1) \cdot \Bi} \bigg] Q_{F} t(x_{\umu})^*
=
\sum_{\ell(\umu) = \un{m}} | \bigg[ \prod_{i \notin F} P_{(k+1) \cdot \Bi} \bigg] Q_F t(x_{\umu})^*|^2.
\]
Alternatively, for a fixed $\un{m} \leq k \cdot \un{1}_F$, we will show that
\begin{equation}\label{eq:pqr}
R_F^{\un{m}}
=
\sum \{ (-1)^{|C \setminus F|} Q_C^{\un{m} + \un{w}} \mid \un{w} \leq k \cdot \un{1}_{C \setminus F}, C \supseteq F\}.
\end{equation}
Indeed since $\supp \un{m} \subseteq F$ we have
\[
\sum_{\ell(\umu) = \un{m}} t(x_{\umu}) \bigg[ \prod_{i \notin F} P_{(k+1) \cdot \Bi} \bigg] Q_F t(x_{\umu})^* \ze_{\un{n}}
=
\begin{cases}
\ze_{\un{n}} & \text{ if } \un{n}_F = \un{m}, \un{n}_{F^c} \geq (k+1) \cdot \un{1}_{F^c}, \\
0 & \text{ otherwise}.
\end{cases}
\]
On the other hand if $\un{0} \neq \un{w} \in C \setminus F$ then $Q_C^{\un{m} + \un{w}} \ze_{\un{n}} = \ze_{\un{n}}$ exactly when $\un{n}_C = \un{m}_C + \un{w}_C = \un{m} + \un{w}$, otherwise it is zero.
Equivalently we have
\[
Q_C^{\un{m} + \un{w}} \ze_{\un{n}}
=
\begin{cases}
\ze_{\un{n}} & \text{ if } \un{n}_F = \un{m}, \un{n}_{C \setminus F} = \un{w}, \\
0 & \text{ otherwise}.
\end{cases}
\]
Therefore we have the following cases that verify equation (\ref{eq:pqr}):

\noindent
$\bullet$ Case 1, $\un{n}_F = \un{m}$ and $\un{n}_{F^c} \geq (k+1) \cdot \un{1}_{F^c}$.
In this case we have that $\un{n}_{C \setminus F} \geq (k+1) \cdot \un{1}_{C \setminus F} > \un{w}$ for every $C \supseteq F$ and $\un{w} \leq k \cdot \un{1}_{C \setminus F}$.
Hence we have
\begin{align*}
\sum \{ (-1)^{|C \setminus F|} Q_C^{\un{m} + \un{w}} \ze_{\un{n}} \mid \un{w} \leq k \cdot \un{1}_{C \setminus F}, C \supseteq F\}
=
Q_F^{\un{m}} \ze_{\un{n}}
=
\ze_{\un{n}}.
\end{align*}
$\bullet$ Case 2, $\un{n}_F \neq \un{m}$.
In this case we directly verify that
\begin{align*}
\sum \{ (-1)^{|C \setminus F|} Q_C^{\un{m} + \un{w}} \ze_{\un{n}} \mid \un{w} \leq k \cdot \un{1}_{C \setminus F}, C \supseteq F\}
=
0.
\end{align*}
$\bullet$ Case 3, $\un{n}_F = \un{m}$ and $n_i \leq k$ for some $i \notin F$.
In this case set $G := \{i \notin F \mid n_i \leq k\} \neq \mt$.
For every $C \supseteq F$ with $C \setminus F \not\subseteq G$ and $\un{w} \leq k \cdot \un{1}_{C \setminus F}$ we have that $\un{w}_{C \setminus F} \neq \un{n}_{C \setminus F}$.
On the other hand for every $C \supseteq F$ with $C \setminus F \subseteq G$ there exists a unique $\un{w} \leq k \cdot \un{1}_{C \setminus F}$ such that $\un{w} = \un{n}_{C \setminus F}$.
Therefore we deduce
\begin{align*}
\sum \{ (-1)^{|C \setminus F|} Q_C^{\un{m} + \un{w}} \ze_{\un{n}} \mid \un{w} \leq k \cdot \un{1}_{C \setminus F}, C \supseteq F\}
& =
\sum \{ (-1)^{|C \setminus F|} \ze_{\un{n}} \mid C \setminus F \subseteq G \} \\
& =
\sum_{C \subseteq G} (-1)^{|C|} \ze_{\un{n}}
=
0.
\end{align*}

Now we return to the proof.
As in Lemma \ref{L:proj} we have that $\{R_F^{\un{m}}\}_{\un{m} \leq k \cdot \un{1}_F}$ is a family of pairwise orthogonal projections.
Thus the KMS-condition yields
\begin{align*}
\vphi_F(f)
& =
\sum_{C \supseteq F} (-1)^{|C \setminus F|} \sum_{\un{n} \in C} \vphi(Q_C^{\un{n}} f) 
 =
\lim_k \sum_{C \supseteq F} (-1)^{|C \setminus F|} \sum_{\un{n} \leq k \cdot \un{1}_C} \vphi(Q_C^{\un{n}} f) \\
& =
\lim_k \sum_{\un{m} \leq k \cdot \un{1}_F} \vphi(R_F^{\un{m}} f) 
 =
\lim_k \sum_{\un{m} \leq k \cdot \un{1}_F} \vphi(R_F^{\un{m}} f R_F^{\un{m}})
=
\lim_k \sum_{\un{n}, \un{m} \leq k \cdot \un{1}_F} \vphi(R_F^{\un{n}} f R_F^{\un{m}}).
\end{align*}
Therefore $\vphi_F$ is a positive functional and moreover $\vphi_F \leq \vphi$.
In particular Lemma \ref{L:proj} yields
\begin{align*}
\sum_{\un{n} \in F} \vphi_F(Q_F^{\un{n}})
& =
\sum_{C \supseteq F} (-1)^{|C \setminus F|} \, \sum_{\un{n} \in F} \, \sum_{\un{m} \in C} \vphi(Q_F^{\un{n}} Q_C^{\un{m}}) \\
& =
\sum_{C \supseteq F} (-1)^{|C \setminus F|} \, \sum_{\un{m}'' \in C \setminus F} \, \sum_{\un{n} \in F} \, \sum_{\un{m}' \in F} \vphi(Q_F^{\un{n}} Q_C^{\un{m}' + \un{m}''}) \\
& =
\sum_{C \supseteq F} (-1)^{|C \setminus F|} \, \sum_{\un{m}'' \in C \setminus F} \, \sum_{\un{m}' \in F} \vphi(Q_C^{\un{m}' + \un{m}''}) \\
& =
\sum_{C \supseteq F} (-1)^{|C \setminus F|} \, \sum_{\un{m} \in C} \vphi(Q_C^{\un{m}})
=
\vphi_F(1).
\end{align*}
To finish the proof we have to show that $\vphi_F(Q_{\Bi}) = 0$ for every $i \notin F$.
To this end we directly compute
\begin{align*}
\vphi_F(Q_{\Bi})
& =
\sum_{C \supseteq F \cup \{i\}} (-1)^{|C \setminus F|} \vphi_{0,C}(Q_{\Bi})
+
\sum_{i \notin C \supseteq F} (-1)^{|C \setminus F|} \vphi_{0,C}(Q_{\Bi}) \\
& =
\sum_{i \notin C \supseteq F} (-1)^{|C \setminus F| + 1} \vphi_{0,C\cup \{i\}}(Q_{\Bi})
+
\sum_{i \notin C \supseteq F} (-1)^{|C \setminus F|} \vphi_{0,C}(Q_{\Bi}) \\
& =
- \sum_{i \notin C \supseteq F} (-1)^{|C \setminus F|} \vphi_{0,C}(Q_{\Bi})
+
\sum_{i \notin C \supseteq F} (-1)^{|C \setminus F|} \vphi_{0,C}(Q_{\Bi})
=
0,
\end{align*}
where we used that $\vphi_{0, C \cup \{i\}}(Q_{\Bi}) = \vphi_{0,C}(Q_{\Bi})$ when $i \notin C$ from Lemma \ref{L:0-q}.
\end{proof}

\begin{lemma}\label{L:less 0}
With the aforementioned notation and for $\mt \neq F \neq \{1, \dots, N\}$, we have that $(\vphi_C)_F = 0$ whenever $C \neq F$ with $|C| \geq |F|$.
\end{lemma}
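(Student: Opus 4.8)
The plan is to split according to how $C$ fails to equal $F$ and to kill the defining alternating sum of $(\varphi_C)_F$ term by term in each case. First one should discard the degenerate situation: if $\varphi(Q_C)=0$ then $\varphi_C=0$ by Lemma \ref{L:F-part} and there is nothing to prove, so assume $\varphi_C\neq0$. Applying Lemma \ref{L:F-part} with $C$ in place of $F$, the functional $\varphi_C$ is positive and satisfies the KMS-condition, and $\psi:=\varphi_C(1)^{-1}\varphi_C\in\Eq_\be^C(\N\T(X))$. By Proposition \ref{P:states ideals} this means $\varphi_C$ annihilates the ideal $\sca{Q_\Bi\mid i\notin C}$ and is recovered from $\sca{Q_C}$ through $\varphi_C(g)=\lim_k\sum_{\un{n}\leq k\cdot\un{1}_C}\varphi_C(Q_C^{\un{n}}\,g\,Q_C^{\un{n}})$. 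Since $C\neq F$, exactly one of $F\not\subseteq C$ or $F\subsetneq C$ holds (the size hypothesis $|C|\geq|F|$ plays no role below).

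\emph{Case $F\not\subseteq C$.} Fix $i_0\in F\setminus C$. Every $C'\supseteq F$ contains $i_0$, so $Q_{C'}=(1-P_{\Bi_0})\prod_{i\in C'\setminus\{i_0\}}(1-P_\Bi)$ lies in the ideal $\sca{Q_{\Bi_0}}$, and hence so do $Q_{C'}^{\un{m}}=\sum_{\ell(\umu)=\un{m}}t(x_{\umu})Q_{C'}t(x_{\umu})^*$ and $Q_{C'}^{\un{m}}\,f\,Q_{C'}^{\un{m}}$ for every $f$ and every $\un{m}$. Because $i_0\notin C$, the functional $\varphi_C$ vanishes on $\sca{Q_{\Bi_0}}\subseteq\sca{Q_\Bi\mid i\notin C}$, so each summand $(\varphi_C)_{0,C'}(f)=\sum_{\un{m}\in C'}\varphi_C(Q_{C'}^{\un{m}}fQ_{C'}^{\un{m}})$ vanishes, and therefore $(\varphi_C)_F=\sum_{C'\supseteq F}(-1)^{|C'\setminus F|}(\varphi_C)_{0,C'}=0$.

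\emph{Case $F\subsetneq C$.} Fix $i_1\in C\setminus F$. Transcribing the computation from the proof of Lemma \ref{L:F-part} for the functional $\varphi_C$, for each $k$ the projections $R_F^{\un{m}}:=\sum_{\ell(\umu)=\un{m}}t(x_{\umu})\big[\prod_{i\notin F}P_{(k+1)\cdot\Bi}\big]Q_F\,t(x_{\umu})^*$ (with $\un{m}\leq k\cdot\un{1}_F$) are pairwise orthogonal and $(\varphi_C)_F(f)=\lim_k\sum_{\un{m}\leq k\cdot\un{1}_F}\varphi_C(R_F^{\un{m}}fR_F^{\un{m}})$ for every $f$. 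As $R_F^{\un{m}}$ is supported on the summands $X_{\un{n}}$ of $\F X$ with $\un{n}_{F^c}\geq(k+1)\cdot\un{1}_{F^c}$ and $i_1\notin F$, it follows that $R_F^{\un{m}}\leq P_{(k+1)\cdot\Bi_1}$, hence $\sum_{\un{m}\leq k\cdot\un{1}_F}R_F^{\un{m}}\leq P_{(k+1)\cdot\Bi_1}$, so that $\sum_{\un{m}\leq k\cdot\un{1}_F}\varphi_C(R_F^{\un{m}}fR_F^{\un{m}})\leq\nor{f}\,\varphi_C(P_{(k+1)\cdot\Bi_1})$ for every $0\leq f$. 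On the other hand, since $i_1\in C$ we have $Q_C^{\un{n}}P_{k\cdot\Bi_1}Q_C^{\un{n}}=Q_C^{\un{n}}$ when $n_{i_1}\geq k$ and $=0$ otherwise, whence $\psi(P_{k\cdot\Bi_1})=\sum\{\psi(Q_C^{\un{n}})\mid\un{n}\in C,\ n_{i_1}\geq k\}\to0$ as $k\to\infty$, being the tail of the convergent series $\sum_{\un{n}\in C}\psi(Q_C^{\un{n}})=1$. Therefore $\varphi_C(P_{(k+1)\cdot\Bi_1})\to0$, which forces $(\varphi_C)_F(f)=0$ for every $f\geq0$; as $\N\T(X)$ is spanned by its positive elements, $(\varphi_C)_F=0$.

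I expect the main obstacle to be purely organisational: the second case leans on the (long) proof of Lemma \ref{L:F-part}, so one must carefully reproduce the $R_F^{\un{m}}$-expansion and its pairwise orthogonality for $\varphi_C$, and then keep straight which projections are indexed by $F$ and which by $C$ when comparing $\sum_{\un{m}}R_F^{\un{m}}$ to $P_{(k+1)\cdot\Bi_1}$ and when rewriting $\varphi_C(P_{k\cdot\Bi_1})$ as a tail of $\sum_{\un{n}\in C}\psi(Q_C^{\un{n}})$.
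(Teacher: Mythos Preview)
Your proof is correct. The first case ($F\not\subseteq C$) matches the paper's argument essentially verbatim: both observe that every $C'\supseteq F$ contains some $i_0\in F\setminus C$, so $(\varphi_C)_{0,C'}=0$ because $\varphi_C$ annihilates $\sca{Q_{\Bi_0}}$.

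For the second case ($F\subsetneq C$) you take a genuinely different route. The paper proceeds purely algebraically: it establishes the identity $(\varphi_{0,G})_{0,D}=\varphi_{0,G}$ whenever $G\supseteq C\supseteq D$ (via Lemma~\ref{L:proj}), restricts the outer sum in $(\varphi_C)_F=\sum_{D\supseteq F}(-1)^{|D\setminus F|}\sum_{G\supseteq C}(-1)^{|G\setminus C|}(\varphi_{0,G})_{0,D}$ to $F\subseteq D\subseteq C$, and then kills the resulting double alternating sum by M\"obius-type cancellation (using $F\neq\{1,\dots,N\}$). Your argument is analytic rather than combinatorial: you recycle the $R_F^{\un{m}}$-expansion from Lemma~\ref{L:F-part}, observe that $\sum_{\un{m}\leq k\cdot\un{1}_F}R_F^{\un{m}}\leq P_{(k+1)\cdot\Bi_1}$ for any $i_1\in C\setminus F$, and then show $\varphi_C(P_{k\cdot\Bi_1})\to 0$ as a tail of the convergent series $\sum_{\un{n}\in C}\psi(Q_C^{\un{n}})=1$. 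This is arguably more conceptual---it exhibits $(\varphi_C)_F$ as mass escaping to infinity along a direction in $C\setminus F$---and as you note it does not use the hypothesis $|C|\geq|F|$ (neither does the paper's proof, in fact). The paper's approach, on the other hand, avoids re-invoking the somewhat lengthy $R_F^{\un{m}}$ machinery and instead isolates the clean auxiliary identity $(\varphi_{0,G})_{0,D}=\varphi_{0,G}$, which may be of independent use.
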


\begin{proof}
We first show that $(\vphi_C)_{0,D} = 0$ whenever $C \not\supseteq D$.
Indeed in this case there exists an $i \in D \setminus C$ and so $\vphi_C(Q_D) = 0$.
In particular we have $\vphi_C(Q_D^{\un{n}} f) = 0$ for all $\un{n} \in D$ and $f \in \N\T(X)$.
Therefore we get that
\[
(\vphi_C)_{0,D}(f) = \sum_{\un{n} \in D} \vphi_C(Q_{D}^{\un{n}} f) = 0.
\]

We will consider two cases for $C$ and $F$.
First suppose that $C \not\supseteq F$.
If $D \supseteq F$ then $C \not\supseteq D$, and so by the above we have that
\begin{align*}
(\vphi_C)_F
& =
\sum_{D \supseteq F} (-1)^{|D \setminus F|} (\vphi_C)_{0, D}
 =
0.
\end{align*}

Secondly suppose that $C \supsetneq F$.
If $C \not\supseteq D$ then as before we have $(\vphi_C)_{0,D} = 0$.
If $C \supseteq D$ then Lemma \ref{L:proj} gives $(\vphi_{0,G})_{0,D} = \vphi_{0,G}$ for all $G \supseteq C$ since
\begin{align*}
(\vphi_{0,G})_{0,D}(f) 
& = 
\sum_{\un{n} \in D} \sum_{\un{m} \in G} \vphi(Q_G^{\un{m}} Q_D^{\un{n}} f)
= \sum_{\un{n} \in D} \sum_{\un{m}' \in D} \sum_{\un{m}'' \in G \setminus D} \vphi(Q_G^{\un{m}' + \un{m}''} Q_D^{\un{n}} f) \\
& = \sum_{\un{m}' \in D} \sum_{\un{m}'' \in G \setminus D} \vphi(Q_G^{\un{m}' + \un{m}''} f)
= \vphi_{0,G}(f).
\end{align*}
Thus we have
\begin{align*}
(\vphi_C)_F
& =
\sum_{D \supseteq F} (-1)^{|D \setminus F|} \sum_{G \supseteq C} (-1)^{|G \setminus C|} (\vphi_{0,G})_{0,D} 
=
\sum_{C \supseteq D \supseteq F} (-1)^{|D \setminus F|} \sum_{G \supseteq C} (-1)^{|G \setminus C|} (\vphi_{0,G})_{0,D} \\
& =
\sum_{C \supseteq D \supseteq F} (-1)^{|D \setminus F|} \sum_{G \supseteq C} (-1)^{|G \setminus C|} \vphi_{0,G}
=
\de_{C, \{1, \dots, N\}} \sum_{D \supseteq F} (-1)^{|D \setminus F|} \vphi_{0, \{1, \dots, N\}}
=
0,
\end{align*}
since $F \neq \{1, \dots, N\}$.
\end{proof}

\begin{proof}[{\bf Proof of Theorem \ref{T:con}}]
For every $\mt \neq F \subseteq \{1, \dots, N\}$ let the functionals $\vphi_F$ as defined above and set
\[
\vphi_\infty := \vphi - \sum \{ \vphi_F \mid \mt \neq F \subseteq \{1, \dots, N\}\}.
\]
As we have observed every $\vphi_F$ satisfies the KMS-condition and its normalization will provide the $F$-component for the convex decomposition of $\vphi$.
Consequently the normalization of $\vphi_\infty$ will give the infinite part of $\vphi$.

To see that $\vphi_\infty$ is positive we proceed inductively.
First consider $F = \{1, \dots, N\}$.
By Lemma \ref{L:0-q} we get that
\[
\vphi_{\{1, \dots, N\}}(Q_{\{1, \dots, N\}}) = \vphi_{0, \{1, \dots, N\}}(Q_{\{1, \dots, N\}}) =  \vphi(Q_{\{1, \dots, N\}}). 
\]
The functional $\vphi - \vphi_{\{1, \dots, N\}}$ satisfies the KMS-condition and it is positive by Lemma \ref{L:F-part}.
It annihilates $Q_{\{1, \dots, N\}}$ and thus as in Proposition \ref{P:states ideals}, it annihilates the ideal that $Q_{\{1, \dots, N\}}$ generates.
Hence $\vphi - \vphi_{\{1, \dots, N\}}$ induces a positive functional on the quotient of $\N\T(X)$ by the ideal $\sca{Q_{\{1, \dots, N\}}}$.

Now let $F = \{2, \dots, N\}$.
By Lemma \ref{L:less 0} and Lemma \ref{L:F-part} we also have that
\[
\vphi_{\{2, \dots, N\}} = (\vphi - \vphi_{\{1, \dots, N\}})_{\{2, \dots, N\}} \leq \vphi - \vphi_{\{1, \dots, N\}}
\]
and therefore the functional 
\[
\vphi  - \vphi_{\{1, \dots, N\}} - \vphi_{\{2, \dots, N\}}
\]
is positive.
By Lemma \ref{L:0-q} we have that
\begin{align*}
\vphi_{\{1, \dots, N\}} (Q_{\{2, \dots, N\}}) + \vphi_{\{2, \dots, N\}} (Q_{\{2, \dots, N\}}) 
& = \\
& \hspace{-3.5cm} = 
\vphi_{0, \{1, \dots, N\}} (Q_{\{2, \dots, N\}})
+ \vphi_{0, \{2, \dots, N\}} (Q_{\{2, \dots, N\}})
- \vphi_{0, \{1, \dots, N\}} (Q_{\{2, \dots, N\}}) \\
& \hspace{-3.5cm} = \vphi (Q_{\{2, \dots, N\}}).
\end{align*}
Moreover $\vphi_{\{2, \dots, N\}}(Q_{\bo{1}}) = 0$ by Lemma \ref{L:F-part}, and so
\[
\vphi_{\{2, \dots, N\}}(Q_{\{1, \dots, N\}}) = 0.
\] 
Therefore the positive functional $\vphi  - \vphi_{\{1, \dots, N\}} - \vphi_{\{2, \dots, N\}}$ annihilates both $Q_{\{1, \dots, N\}}$ and $Q_{\{2, \dots, N\}}$ and satisfies the KMS-condition.
Hence, as in Proposition \ref{P:states ideals}, it defines a positive functional on the quotient of $\N\T(X)$ by the ideal $\sca{Q_{\{1, \dots, N\}}, Q_{\{2, \dots, N\}}}$.

For the inductive hypothesis let $|F_1| = \cdots = |F_k| = n$ and suppose that
\[
\vphi - \sum_{|F| \geq n +1} \vphi_F - \sum_{l=1}^k \vphi_{F_l}
\]
defines a positive functional on the ideal
\[
\sca{Q_F, Q_{F_l} \mid |F| \geq n+1, l=1, \dots, k}.
\]
Let $D \neq F_1, \dots, F_k$ such that $|D| = n$.
We wish to show that the functional
\[
\vphi - \sum_{|F| \geq n+1} \vphi_F - \sum_{l=1}^{k} \vphi_{F_l} - \vphi_D
\]
is positive and annihilates the projections
\[
Q_D
\qand
\{Q_F, Q_{F_l} \mid |F| \geq n+1, l=1, \dots, k\}.
\]
An application of Lemma \ref{L:less 0} and Lemma \ref{L:F-part} gives positivity since
\[
\vphi_D = (\vphi - \sum_{|F| \geq n+1} \vphi_F - \sum_{l=1}^{k} \vphi_{F_l})_D \leq \vphi - \sum_{|F| \geq n+1} \vphi_F - \sum_{l=1}^{k} \vphi_{F_l}.
\]
For $G$ such that $D \cap G^c \neq \mt$ and $i \in D \setminus G$ we have
\[
\vphi_G(Q_{D}) \leq \vphi_G(Q_\Bi) = 0.
\]
In particular $\vphi_{F_l}(Q_D) = 0$ for all $l=1, \dots, k$.
Therefore we obtain
\begin{align*}
\sum_{|F| \geq n+1} \vphi_F(Q_D) + \sum_{l=1}^{k} \vphi_{F_l}(Q_D) + \vphi_D(Q_D)
& = \\
& \hspace{-5cm} =
\sum_{F : D \subseteq F} \bigg[ \sum_{C : F \subseteq C} (-1)^{|C \setminus F|} \vphi_{0, C}(Q_D) \bigg]
 =
\sum_{C : D \subseteq C} \bigg[ \sum_{F : D \subseteq F \subseteq C} (-1)^{|C \setminus F|} \vphi_{0, C}(Q_D) \bigg] \\
& \hspace{-5cm} =
\sum_{C : D \subseteq C} \vphi_{0, C}(Q_D) \bigg[ \sum_{F : D \subseteq F \subseteq C} (-1)^{|C \setminus F|} \bigg]
 =
\vphi_{0, D}(Q_D)
=
\vphi(Q_D),
\end{align*}
since $\sum_{F : D \subseteq F \subseteq C} (-1)^{|C \setminus F|} = 0$ unless $D = C$.
Furthermore we have that $F_l \cap D^c\neq \mt$ as $D \neq F_l$ and $F \cap D^c \neq \mt$ as $|D| < |F|$.
Therefore $G \cap D^c \neq \mt$ for any
\[
G \in \{F, F_l \mid |F| \geq k+1, l=1, \dots, k\}.
\]
By choosing $i \in G \setminus D$ we get that
\[
\vphi_D(Q_{G}) \leq \vphi_D(Q_{\Bi}) = 0.
\]
Finally note that the positive functional
\[
\vphi - \sum_{|F| \geq n+1} \vphi_F - \sum_{l=1}^{k} \vphi_{F_l} - \vphi_D
\]
satisfies the KMS condition as so does every summand.
Therefore it defines a positive functional on the quotient 
\[
\N\T(X) / \sca{Q_F, Q_{F_l}, Q_D \mid |F| \geq n +1, l=1, \dots k}.
\]

Inducting on $|F| = N, N-1, \dots, 1$ then gives that $\vphi_\infty$ is positive on the quotient of $\N\T(X)$ by the ideal $\sca{Q_F \mid \mt \neq F} = \sca{Q_{\Bi} \mid i=1, \dots, N}$ which is exactly $\N\O(A,X)$.
This includes that $\vphi_\infty(Q_{\Bi}) = 0$ for all $i \in \{1, \dots, N\}$.

To show uniqueness of the decomposition suppose that there are $\vphi'_F \in \Eq_\be^F(\N\T(X))$ with
\[
\vphi_\infty + \sum_{\mt \neq F \subseteq \{1, \dots, N\} } \vphi_F 
=
\vphi'_\infty + \sum_{\mt \neq F \subseteq \{1, \dots, N\} } \vphi'_F.
\]
By Proposition \ref{P:states ideals} every $\vphi_F, \vphi'_F \in \Eq_\be^F(\N\T(X))$ is uniquely identified by its restriction on $\sca{Q_F}$ while
\[
\vphi_F|_{\sca{Q_D}} = 0= \vphi'_F|_{\sca{Q_D}}
\qif
D \cap F^c \neq \mt.
\]
Applying on $\sca{Q_{\{1, \dots, N\}}}$ yields
\[
\vphi_{\{1, \dots, N\}}|_{\sca{Q_{\{1, \dots, N\}}}} = \vphi'_{\{1, \dots, N\}}|_{\sca{Q_{\{1, \dots, N\}}}}
\]
and thus so do their unique extensions on $\N\T(X)$.
We may thus remove those from the sums and proceed with $\{2, \dots, N\}$.
Inducting on the sets $|F| = N -1$ identifies $\vphi_F$ with $\vphi'_F$ whenever $|F| = N, N-1$.
Inducting on $|F|$ concludes that so is true for all $F$ leaving $\vphi_\infty = \vphi'_\infty$.
\end{proof}

\section{Parametrization of the gauge-invariant equilibria}\label{S:para}

We now proceed to the parametrization of the $F$-components of the gauge-invariant equilibrium states.
We need to identify the corresponding $F$-parts in the simplex $\Tr(A)$ of tracial states of $A$.

\begin{definition}
Let $X$ be a product system over $A$ with a unit decomposition $x = \{x_{i,j} \mid j=1, \dots, d_i, i=1, \dots, N\}$ and let $\be >0$.
For every $\mt \neq F \subseteq \{1, \dots, N\}$ and $\tau \in \Tr(A)$ we define
\begin{equation*}
c_{\tau, \be}^F := \sum \{ e^{- |\umu| \be} \tau(\sca{x_{\umu}, x_{\umu}}) \mid \ell(\umu) \in F \}.
\end{equation*}
Moreover we define the $F$-set of tracial states of $A$ by
\begin{equation*}
\Tr_{\be}^F(A)
:=
\{ \tau \in \Tr(A) \mid c_{\tau,\be}^F < \infty
\text{ and }
e^{\be}\tau(a) = \sum_{j=1}^{d_i} \sca{x_{i,j}, a x_{i,j}} \foral i \notin F\}.
\end{equation*}
In particular for $F = \{1, \dots, N\}$ we write
\begin{equation*}
\Tr_\be^{\fty}(A) := \{\tau \in \Tr(A) \mid c_{\tau, \be}^{\{1, \dots, N\}} = \sum_{|\umu| = k} e^{- k \be} \tau(\sca{x_{\umu}, x_{\umu}}) < \infty \}.
\end{equation*}
The case of $F = \mt$ is recaptured in the averaging traces, namely
\begin{equation*}
\Avt_\be(A) := \{\tau \in \Tr(A) \mid e^{\be}\tau(a) = \sum_{j=1}^{d_i} \tau(\sca{x_{i,j}, a x_{i,j}}) \foral i = 1, \dots, N\}.
\end{equation*}
\end{definition}

Due to Remark \ref{R:qizero} that will follow, the above definitions are independent of the choice of the unit decomposition.

\begin{proposition}\label{P:cap}
Let $X$ be a product system of finite rank over $A$ and let $\be > 0$.
Then $\Tr_\be^F(A) \cap \Tr_\be^{F'}(A) = \mt$ whenever $F \neq F'$.
\end{proposition}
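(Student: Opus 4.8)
The plan is to argue by contradiction and to locate a divergent geometric subseries inside one of the finiteness conditions $c_{\tau,\be}^{\bullet}<\infty$. Suppose $\tau\in\Tr_\be^F(A)\cap\Tr_\be^{F'}(A)$ with $F\neq F'$. Since the symmetric difference of $F$ and $F'$ is non-empty, after relabelling I may fix an index $i$ with $i\in F_1\setminus F_2$, where $\{F_1,F_2\}=\{F,F'\}$. Then $F_1\neq\mt$, so $\tau\in\Tr_\be^{F_1}(A)$ forces $c_{\tau,\be}^{F_1}<\infty$; and $i\notin F_2$, so $\tau\in\Tr_\be^{F_2}(A)$ (which for $F_2=\mt$ is just $\tau\in\Avt_\be(A)$) supplies the averaging relation $e^{\be}\tau(a)=\sum_{j=1}^{d_i}\tau(\sca{x_{i,j},a\,x_{i,j}})$ for all $a\in A$. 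Writing $\Phi_i(a):=\sum_{j=1}^{d_i}\sca{x_{i,j},\phi_\Bi(a)x_{i,j}}$, this says $\tau\circ\Phi_i=e^{\be}\tau$.

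The key step is to show that already the portion of $c_{\tau,\be}^{F_1}$ coming from words supported on the single index $i$ diverges. Put $T_{k\Bi}:=\sum_{\ell(\umu)=k\Bi}\sca{x_{\umu},x_{\umu}}\in A$, so that this portion is $\sum_{k\geq 0}e^{-k\be}\tau(T_{k\Bi})$, and it is genuinely $\leq c_{\tau,\be}^{F_1}$ because $\supp(k\Bi)=\{i\}\subseteq F_1$ and all summands are non-negative. Expanding the inner product of an elementary tensor under the product-system identification $X_{k\Bi}\simeq X_{(k-1)\Bi}\otimes_A X_\Bi$ gives the recursion $T_{k\Bi}=\Phi_i(T_{(k-1)\Bi})$ for $k\geq 2$, whence $\tau(T_{k\Bi})=e^{(k-1)\be}\tau(T_\Bi)$ for every $k\geq 1$ by the relation above. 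For the base term I would use that $\phi_\Bi(1_A)$ is a projection in $\L X_\Bi$, so $\sca{x_{i,j},\phi_\Bi(1_A)x_{i,j}}\leq\sca{x_{i,j},x_{i,j}}$ in $A$, hence $\Phi_i(1_A)\leq T_\Bi$ and, after reducing to the unital case by Proposition \ref{P:unital}, $\tau(T_\Bi)\geq\tau(\Phi_i(1_A))=e^{\be}\tau(1_A)=e^{\be}$. Therefore $e^{-k\be}\tau(T_{k\Bi})=e^{-\be}\tau(T_\Bi)\geq 1$ for all $k\geq 1$, so $c_{\tau,\be}^{F_1}\geq\sum_{k\geq 1}1=\infty$, contradicting $c_{\tau,\be}^{F_1}<\infty$; hence the two sets are disjoint.

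I expect the only delicate point to be the recursion $T_{k\Bi}=\Phi_i(T_{(k-1)\Bi})$: it is valid only for $k\geq 2$, where the factorisation $X_{k\Bi}\simeq X_{(k-1)\Bi}\otimes_A X_\Bi$ is part of the product-system data and no non-degeneracy of the left action is invoked, and one must not extend it to $k=1$ (where $X_\Bi\simeq A\otimes_A X_\Bi$ can fail). The extra factor $e^{\be}$ at the bottom of the tower is then recovered not from $T_{\un 0}=1_A$ but from the inequality $\Phi_i(1_A)\leq T_\Bi$. Everything else --- the combinatorial choice of $i$, the geometric-series estimate, and the use of the averaging relation and traciality of $\tau$ --- is routine.
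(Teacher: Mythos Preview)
Your proof is correct and follows essentially the same approach as the paper: choose an index $i$ lying in one of $F,F'$ but not the other, use the averaging relation coming from the set that excludes $i$ to show the $i$-supported words contribute a divergent series to the $c_{\tau,\be}$ of the other set. The paper's version is slightly more streamlined because, having already reduced to the unital case (so that $\phi_\Bi(1_A)=1_{X_\Bi}$), the recursion $\tau(T_{k\Bi})=e^{\be}\tau(T_{(k-1)\Bi})$ holds exactly for all $k\geq 1$ starting from $T_{\un 0}=1_A$, yielding $\tau(T_{k\Bi})=e^{k\be}$ directly; your separate treatment of the base case via $\Phi_i(1_A)\leq T_\Bi$ is therefore unnecessary once you have invoked Proposition~\ref{P:unital}, though it does no harm.
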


\begin{proof}
Without loss of generality let $i \in F' \setminus F$ and suppose there is a $\tau \in \Tr_\be^F(A) \cap \Tr_\be^{F'}(A)$.
Then $e^{k\be} = \sum_{|\mu_i| = k} \tau(\sca{x_{i, \mu_i}, x_{i,\mu_i}})$ as $i \notin F$ and so we reach the contradiction
\[
\infty > c_{\tau, \be}^{F'} \geq \sum_{k=0}^\infty \sum_{|\mu_i| = k} e^{-k\be} \tau(\sca{x_{i,\mu_i}, x_{i,\mu_i}}) = \sum_{k=0}^\infty 1 = \infty.
\qedhere
\]
\end{proof}

We wish to establish a parametrization of $\GEq_\be^F(\N\T(X))$ by an appropriate sub-simplex of $\Tr_{\be}^F(A)$.
In order to achieve this we need a characterization of the ideals given by $\fI_F = \ker\{ A \to \N\O(F, A, X) \}$ from Definition \ref{D:rel}.

\begin{proposition}\label{P:ideals}
Let $X$ be a product system of finite rank over $A$ and fix a set $\mt \neq F \subseteq \{1, \dots, N\}$.
Then
\begin{equation*}
\fI_F  = \{a \in A \mid \lim_{k} \vphi_{k \cdot \un{1}_F}(a) = 0 \}.
\end{equation*}
\end{proposition}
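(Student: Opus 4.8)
The plan is to identify $\fI_F=\ker\{A\to\N\O(F,A,X)\}$ by computing the quotient norm in $\N\O(F,A,X)=\N\T(X)/\sca{Q_\Bi\mid i\in F}$, the first step being to produce a convenient approximate unit for the ideal $\sca{Q_\Bi\mid i\in F}$.

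For each fixed $i$ one checks that $Q_\Bi^{n\cdot\Bi}$ acts on $\F X$ as the projection onto $\sum\{X_{\un{m}}\mid m_i=n\}$, so that $Q_\Bi^{n\cdot\Bi}=P_{n\cdot\Bi}-P_{(n+1)\cdot\Bi}$ and the sum telescopes to $\sum_{n=0}^{k}Q_\Bi^{n\cdot\Bi}=1-P_{(k+1)\cdot\Bi}$. As established in the proof of Proposition \ref{P:states ideals} (applied with the singleton $F=\{i\}$, via \cite[Proposition 5.5]{DK18}), the increasing family $\{1-P_{(k+1)\cdot\Bi}\}_k$ is then an approximate unit for $\sca{Q_\Bi}$. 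Since the $P_\Bi$ pairwise commute, the increasing sequence of projections $e_k:=1-\prod_{i\in F}P_{(k+1)\cdot\Bi}$ lies in $\sca{Q_\Bi\mid i\in F}=\sum_{i\in F}\sca{Q_\Bi}$ and dominates $1-P_{(k+1)\cdot\Bi}$ for every $i\in F$, hence is an approximate unit for $\sca{Q_\Bi\mid i\in F}$.

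I would then invoke the standard fact that for a closed ideal $I$ of a C*-algebra $B$ with increasing approximate unit $(e_k)\subseteq I$ and quotient map $q$, one has $\|q(x)\|=\lim_k\|(1-e_k)x(1-e_k)\|$ for all $x\in B$. Thus $a\in\fI_F$ if and only if $q(\pi(a^*a))=0$, i.e.\ $\lim_k\|(1-e_k)\pi(a^*a)(1-e_k)\|=0$. Now $1-e_k=\prod_{i\in F}P_{(k+1)\cdot\Bi}$ is the projection of $\F X$ onto $\sum\{X_{\un{m}}\mid m_i\geq k+1\foral i\in F\}$ and commutes with $\pi(A)$; so in the faithful Fock representation $(1-e_k)\pi(a^*a)(1-e_k)$ is the block operator $\bigoplus\{\phi_{\un{m}}(a^*a)\mid m_i\geq k+1\foral i\in F\}$, of norm $\sup\{\|\phi_{\un{m}}(a)\|^2\mid m_i\geq k+1\foral i\in F\}$. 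Each such $\un{m}$ satisfies $\un{m}\geq(k+1)\cdot\un{1}_F$, and from $i_{(k+1)\cdot\un{1}_F}^{\un{m}}(\phi_{(k+1)\cdot\un{1}_F}(a))=\phi_{\un{m}}(a)$ together with $i_{\un{n}}^{\un{n}+\un{r}}(S)=u_{\un{n},\un{r}}(S\otimes\id_{X_{\un{r}}})u_{\un{n},\un{r}}^{*}$ one gets $\|\phi_{\un{m}}(a)\|\leq\|\phi_{(k+1)\cdot\un{1}_F}(a)\|$, with equality at $\un{m}=(k+1)\cdot\un{1}_F$. Hence the supremum equals $\|\phi_{(k+1)\cdot\un{1}_F}(a)\|$, so that $a\in\fI_F$ precisely when $\lim_k\|\phi_{k\cdot\un{1}_F}(a)\|=0$, which is the asserted description (the limit existing because the quantity is non-increasing in $k$).

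The telescoping identity and the approximate-unit description of the quotient norm are routine; the one point that needs attention is the reduction of the supremum over the shifted cone $\{\un{m}\geq(k+1)\cdot\un{1}_F\}$ to its corner, which rests on the norm non-increasing behaviour of $S\mapsto u_{\un{n},\un{r}}(S\otimes\id_{X_{\un{r}}})u_{\un{n},\un{r}}^{*}$.
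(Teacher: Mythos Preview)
Your argument is correct. Both your proof and the paper's hinge on the same core computation, namely that $\|\pi(a)\prod_{i\in F}P_{k\cdot\Bi}\|=\sup_{\un{m}\geq k\cdot\un{1}_F}\|\phi_{\un{m}}(a)\|=\|\phi_{k\cdot\un{1}_F}(a)\|$. The paper, however, treats the two inclusions separately: for $\fI_F\subseteq\{a:\lim_k\phi_{k\cdot\un{1}_F}(a)=0\}$ it uses the explicit spanning description of $\sca{Q_\Bi\mid i\in F}$ from \cite[Proposition~5.5]{DK18} together with a compression by $Q_{F^c}$ and $p_{k\cdot\un{1}_F}$, while for the reverse inclusion it produces an explicit approximant $f\in\fI_F$ and bounds $\|\pi(a)+f\|$. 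Your approximate-unit formulation $\|q(\pi(a))\|=\lim_k\|(1-e_k)\pi(a)(1-e_k)\|$ with $1-e_k=\prod_{i\in F}P_{(k+1)\cdot\Bi}$ packages both directions at once and avoids the auxiliary $Q_{F^c}$ compression; the price is that you must first verify that $(e_k)$ really is an approximate unit for the sum ideal, which you do correctly by dominating the single-generator approximate units from Proposition~\ref{P:states ideals}. The two proofs are thus equivalent in substance, with yours slightly more streamlined.
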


\begin{proof}
Recall that by definition we have $\N\O(F, A, X) = \N\T(X)/ \sca{Q_\Bi \mid i \in F}$ and \cite[Proposition 4.6]{DK18} yields
\[
\sca{Q_\Bi \mid i \in F} = \ol{\spn} \{t(X_{\un{n}}) Q_C t(X_{\un{m}})^* \mid \mt \neq C \subseteq F,  \un{n}, \un{m} \in \bZ_+^N\}.
\]
Equation (\ref{eq:p-r}) then implies
\begin{align*}
Q_{F^c} \sca{Q_\Bi \mid i \in F} Q_{F^c}
=
\ol{\spn} \{t(X_{\un{n}}) Q_{C \cup F^c} t(X_{\un{m}})^* \mid \mt \neq C \subseteq F, \un{n}, \un{m} \in F\}.
\end{align*}
Let the projections $p(k):= p_{k \cdot \un{1}_F}$ for $k \in \bZ_+$.
It is clear that if $\un{n}, \un{m} \in F$ and $k$ is large enough so that $k \cdot \un{1}_C > \un{n}_C, \un{m}_C$ then
\[
p(k) t(\xi_{\un{n}}) Q_{C \cup F^c} t(\xi_{\un{m}})^* p(k) = 0.
\]
Therefore if $\pi(a) \in \fI_{F}$ then $Q_{F^c} \pi(a) Q_{F^c} \in Q_{F^c} \sca{Q_\Bi \mid i \in F} Q_{F^c}$ and so
\[
\lim_k \phi_{k \cdot \un{1}_{F}}(a) = \lim_k p(k) \pi(a) p(k) = \lim_k p(k) Q_{F^c} \pi(a) Q_{F^c} p(k) = 0.
\]
Conversely, fix $\eps >0$ and let $k \in \bZ_+$ such that $\| \phi_{k \cdot \un{1}_{F}}(a) \| < \eps$.
If $i \in F$ then
\[
1 - P_{k \cdot \bo{i}} = \sum\{p_{\un{n}} \mid n_i=0, \dots, k-1\} = \sum_{l=0}^{k - 1} Q_{\Bi}^{l \cdot \Bi} \in \fI_F,
\]
and therefore
\[
f := \sum_{\mt \neq C \subseteq F} (-1)^{|C|} \pi(a) \prod_{i \in C} (1 - P_{k \cdot \bo{i}}) \in \fI_F.
\]
Then we get
\begin{align*}
\| \pi(a) + \fI_F \|
& \leq
\|\pi(a) + f\| 
 =
\| \sum_{C \subseteq F} (-1)^{|C|} \pi(a) \prod_{i \in C} (1 - P_{k \cdot \bo{i}}) \| \\
& =
\| \pi(a) \prod_{i \in F} (1 - (1-P_{k \cdot \Bi})) \| 
 =
\| \pi(a) \prod_{i \in F} P_{k \cdot \bo{i}} \| \\
& = 
\| \sumoplus_{\un{n} \geq k \cdot \un{1}_F} \phi_{\un{n}}(a) \| 
= \sup_{\un{n} \in \bZ_+^N} \| \phi_{k \cdot \un{1}_F}(a) \otimes \id_{X_{\un{n}}} \| 
\leq
\|\phi_{k \cdot \un{1}_F} \|
< \eps.
\end{align*}
As $\eps$ was arbitrary we derive that $\pi(a) \in \fI_F$.
\end{proof}

\begin{theorem}\label{T:para}
Let $X$ be a product system of finite rank over $A$ and $\be > 0$.
Then we have the following parametrization:

\noindent
$(1)$ For $F = \mt$ there is a bijection
\[
\Phi^\infty \colon \{\tau \in \Avt_\be(A) \mid \tau|_{\fI_{\{1, \dots, N\}}} = 0\}
\to
\GEq_{\be}^\infty(\N\T(X)),
\]
such that
\[
\Phi^\infty_{\tau}(\pi(a)) = \tau(a) \foral a \in A.
\]
$(2)$ For $F \neq \mt$ there is a bijection
\[
\Phi^F \colon \{ \tau \in \Tr_\be^F(A) \mid \tau|_{\fI_{F^c}} = 0 \} \to \GEq_\be^F(\N\T(X)),
\]
such that
\[
\Phi^F_{\tau}(Q_F) \cdot c_{\tau, \be}^F = 1
\qand
\Phi^F_\tau(Q_F \pi(a) Q_F) = \Phi^F_\tau(Q_F) \cdot \tau(a) \foral a \in A.
\]
If $x = \{x_{i,j} \mid j = 1, \dots, d_i, i=1, \dots, N\}$ is a unit decomposition for $X$ then
\begin{align*}
\Phi^F_{\tau}(t(\xi_{\un{n}}) t(\eta_{\un{m}})^*)
& = 
\de_{\un{n}, \un{m}}  \; (c_{\tau, \be}^F)^{-1} \sum_{\ell(\umu) \in F}  e^{-(|\un{n}| + |\umu|) \be} \tau(\sca{\eta_{\un{m}} \otimes x_{\umu}, \xi_{\un{n}} \otimes x_{\umu}}),
\end{align*}
This description is independent of the choice of the decomposition. \\
$(3)$ Every $\Phi^F$ for $F\neq \mt$ respects convex combinations in the sense that if $\la \in (0,1)$, then
\[
\Phi^F(\tau)
=
\la \frac{c_{\tau_1, \be}^F}{c_{\tau, \be}^F} \Phi^F(\tau_1)
+
(1 - \la) \frac{c_{\tau_2, \be}^F}{c_{\tau, \be}^F} \Phi^F(\tau_2)
\]
for $\tau = \la \tau_1 + (1-\la) \tau_2$ with $\tau_1, \tau_2 \in \Tr_\be^F(A)$, and 
\[
(\Phi^F)^{-1}(\vphi)
=
\la \frac{\vphi_1(Q_F)}{\vphi(Q_F)} (\Phi^F)^{-1}(\vphi_1)
+
(1 - \la) \frac{\vphi_2(Q_F)}{\vphi(Q_F)} (\Phi^F)^{-1}(\vphi_2)
\]
for $\vphi = \la \vphi_1 + (1-\la) \vphi_2$ with $\vphi_1, \vphi_2 \in \GEq_\be^F(\N\T(X))$.
Therefore the parametrizations preserve the extreme points of the simplices.
\end{theorem}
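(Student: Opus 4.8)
The plan is to build the two maps of (2) directly, verify that they are mutually inverse, recover (1) as the degenerate case $F=\mt$, and finally check the behaviour under convex combinations for (3).

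\textbf{From an equilibrium state to a trace.} Fix $\mt\neq F$ and $\vphi\in\GEq_\be^F(\N\T(X))$. Since $Q_F$ lies in the centre of $\N\T(X)^\ga$, the assignment $\tau_0(a):=\vphi(Q_F\pi(a)Q_F)=\vphi(\pi(a)Q_F)$ is a positive functional on $A$. Using $t(x_\umu)^*t(x_\umu)=\pi(\sca{x_\umu,x_\umu})$ and the KMS-condition one gets $\vphi(Q_F^{\un n})=\sum_{\ell(\umu)=\un n}e^{-|\umu|\be}\tau_0(\sca{x_\umu,x_\umu})$, so summing over $\un n\in F$ and using the normalisation defining $\GEq_\be^F$ gives $\sum_{\ell(\umu)\in F}e^{-|\umu|\be}\tau_0(\sca{x_\umu,x_\umu})=1$; hence $\vphi(Q_F)=\tau_0(1)>0$, the state $\tau:=\vphi(Q_F)^{-1}\tau_0$ is well defined, and $c_{\tau,\be}^F=\vphi(Q_F)^{-1}<\infty$. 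Because $\pi(A)$ and $Q_F$ are $\si$-fixed and $Q_F$ is central in $\N\T(X)^\ga$, the KMS-condition forces $\tau(ab)=\tau(ba)$, so $\tau\in\Tr(A)$. For $i\notin F$ one has $\vphi(Q_{F\cup\{i\}})\le\vphi(Q_\Bi)=0$, whence $\vphi(Q_{F\cup\{i\}}^{l\cdot\Bi})=0$ for all $l$; as $Q_F$ commutes with $t(x_{i,j})$ and $Q_F(1-P_{k\cdot\Bi})=\sum_{l<k}Q_{F\cup\{i\}}^{l\cdot\Bi}$ by \eqref{eq:p-r}, Cauchy--Schwarz gives $\vphi(\pi(a)Q_F)=\vphi(\pi(a)Q_FP_{k\cdot\Bi})$, and substituting $Q_FP_{k\cdot\Bi}=\sum_{|\mu_i|=k}t(x_{i,\mu_i})Q_Ft(x_{i,\mu_i})^*$ and applying the KMS-condition to the summands yields $e^{k\be}\tau(a)=\sum_{|\mu_i|=k}\tau(\sca{x_{i,\mu_i},a\,x_{i,\mu_i}})$ for every $k$; at $k=1$ this shows $\tau\in\Tr_\be^F(A)$. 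Iterating over all $i\in F^c$ and estimating $|\tau(\sca{x_\umu,a\,x_\umu})|\le\|\phi_{k\cdot\un{1}_{F^c}}(a)\|\,\tau(\sca{x_\umu,x_\umu})$ (the case $a=1$ controlling the total weight) gives $|\tau(a)|\le\|\phi_{k\cdot\un{1}_{F^c}}(a)\|$ for all $k$, so $\tau$ annihilates $\fI_{F^c}$ by Proposition~\ref{P:ideals}. For $F=\mt$ the same runs with $Q_F=1$ and $\tau=\vphi\circ\pi$: the averaging relation comes from $\vphi(Q_\Bi)=0$ and $\vphi(\pi(a))=\vphi(\pi(a)P_\Bi)=e^{-\be}\sum_j\vphi(\pi(\sca{x_{i,j},a\,x_{i,j}}))$, and $\tau|_{\fI_{\{1,\dots,N\}}}=0$ as above. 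This produces the candidate inverse of $\Phi^F$ (resp. of $\Phi^\infty$).

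\textbf{From a trace to an equilibrium state.} Given $\tau$ in the relevant set I would follow the two steps of \eqref{eq:z}. First, lift $\tau$ to a gauge-invariant $(\si,\be)$-KMS state $\wt\tau$ on $\B_{F^c}=\ol{\spn}\{t(X_{\un k})t(X_{\un w})^*\mid\un k,\un w\perp F\}$ with $\wt\tau\circ\pi=\tau$: as $\B_{F^c}$ is the closure of the increasing union of the cores $\B_{[\un0,\un n]}$ with $\un n\perp F$, the value of $\wt\tau$ on $\psi_{\un n}(\theta^{X_{\un n}}_{\xi_{\un n},\eta_{\un n}})$ is forced to be $e^{-|\un n|\be}\tau(\sca{\eta_{\un n},\xi_{\un n}})$, the relation $e^\be\tau(a)=\sum_j\tau(\sca{x_{i,j},a\,x_{i,j}})$ for $i\in F^c$ is precisely the consistency condition along the connecting inclusions, positivity on each core is read off the frame $\{x_{i,\mu_i}\}$ as in the one-variable case \cite{Kak17}, and $\tau|_{\fI_{F^c}}=0$ is what lets the resulting functional be well defined and contractive on the quotient sitting inside $\B_{F^c}$. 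When $F=\mt$ one has $\B_{F^c}=\N\T(X)^\ga$, and composing $\wt\tau$ with the conditional expectation $E$ gives $\Phi^\infty(\tau)$; since $\vphi(Q_\Bi)=0$ for all $i$ and $\vphi\circ\pi=\tau$, Propositions~\ref{P:char} and~\ref{P:states ideals} show this is the unique state of $\GEq_\be^\infty(\N\T(X))$ over $\tau$, which together with the first part establishes (1).

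\textbf{The finite-type statistical approximation.} For $F\neq\mt$, finite rank makes $Z=\{Z_{\un n}=\ol{t(X_{\un n})\B_{F^c}}\}_{\un n\in F}$ a finite-rank $\bZ_+^F$-product system over $\B_{F^c}$, with unit decomposition $\{t(x_\umu)\mid\ell(\umu)=\un n\}$ at level $\un n$, and $(\pi,t)$ extends to a Nica-covariant representation of $\N\T(X)$ on $\F Z\otimes_{\wt\tau}H_{\wt\tau}$ (creation operators in the $F$-directions, left multiplication by $\B_{F^c}$ in the $F^c$-directions). Set $\vphi:=(c_{\tau,\be}^F)^{-1}\sum_{\ell(\umu)\in F}e^{-|\umu|\be}\,\omega_{\,t(x_\umu)\otimes_{\wt\tau}\xi_{\wt\tau}}$, a convex combination of vector functionals of total mass $(c_{\tau,\be}^F)^{-1}\sum_{\ell(\umu)\in F}e^{-|\umu|\be}\wt\tau(\pi(\sca{x_\umu,x_\umu}))=1$, hence a state. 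On a monomial $t(\xi_{\un n})t(\eta_{\un m})^*$ the $\bZ_+^F$-grading of $\F Z$ forces $\un n_F=\un m_F$ and the gauge-invariant KMS property \eqref{eq:kms3} of $\wt\tau$ on $\B_{F^c}$ forces $\un n_{F^c}=\un m_{F^c}$, and what survives is exactly the displayed formula; its independence of the unit decomposition is Remark~\ref{R:qizero}. By Proposition~\ref{P:char}, $\vphi$ is a gauge-invariant equilibrium state; moreover $Q_F^{\un n}$ acts on $\F Z\otimes_{\wt\tau}H_{\wt\tau}$ as the projection onto the level-$\un n$ summand, so $\sum_{\un n\in F}\vphi(Q_F^{\un n})=1$, while $\vphi(Q_\Bi)=0$ for $i\notin F$ because $Q_\Bi\in\B_{F^c}$ acts by left multiplication and $\wt\tau(Q_\Bi)=\wt\tau(1)-e^{-\be}\sum_j\tau(\sca{x_{i,j},x_{i,j}})=0$ by the $F^c$-averaging relation. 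Thus $\vphi\in\GEq_\be^F(\N\T(X))$ and we put $\Phi^F(\tau):=\vphi$. Finally $\Phi^F$ and the trace-extraction map are mutual inverses: reducing $\Phi^F(\tau)$ recovers $\tau$ from the displayed formula at $\un n=\un m=\un0$, and conversely, by Proposition~\ref{P:states ideals} and \eqref{eq:kms3}, a state in $\GEq_\be^F(\N\T(X))$ is determined by its reduced trace; this proves (2).

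\textbf{Convex combinations, and the main obstacle.} Both $\tau\mapsto c_{\tau,\be}^F$ and the unnormalised functional $\tau\mapsto c_{\tau,\be}^F\,\Phi^F(\tau)$ are affine in $\tau$ straight from their formulas; dividing $c_{\tau,\be}^F\,\Phi^F(\tau)=\la\,c_{\tau_1,\be}^F\,\Phi^F(\tau_1)+(1-\la)\,c_{\tau_2,\be}^F\,\Phi^F(\tau_2)$ by $c_{\tau,\be}^F=\la\,c_{\tau_1,\be}^F+(1-\la)\,c_{\tau_2,\be}^F$ yields the first formula in (3), and the symmetric computation in $\vphi$, using $\vphi(Q_F)$ in place of $c_{\tau,\be}^F$, yields the second; since the coefficients are positive, $\Phi^F$ and its inverse send extreme points to extreme points, and the same holds for the plainly affine $\Phi^\infty$. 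The genuinely delicate point is the construction from a trace: proving that the direct-limit formula for $\wt\tau$ on $\B_{F^c}$ is well defined, contractive and positive — this is exactly where the $F^c$-averaging relation and $\tau|_{\fI_{F^c}}=0$ are used essentially — and then that the weighted sum of vector functionals on $\F Z\otimes_{\wt\tau}H_{\wt\tau}$ really is a KMS state on all of $\N\T(X)$ with the correct $F$-type normalisations, i.e. that the statistical approximation loses neither positivity nor mass from $\sca{Q_F}$. Everything else — the trace extraction, the explicit monomial formula, and the affineness — is bookkeeping with the KMS-condition and Propositions~\ref{P:char}, \ref{P:states ideals}, \ref{P:ideals}.
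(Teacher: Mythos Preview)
Your proposal is correct and follows essentially the same route as the paper: extract the trace $\tau$ from $\vphi\in\GEq_\be^F$ via $Q_F$, lift $\tau$ to a KMS state $\wt\tau$ on $\B_{F^c}$ by a direct-limit argument, then build $\Phi^F(\tau)$ as the statistical approximation on the Fock space of the auxiliary product system $Z$ over $\B_{F^c}$, and finally read off convexity from the affineness of $\tau\mapsto c_{\tau,\be}^F\,\Phi^F(\tau)$.

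One expository point to tighten: the direct-limit structure you invoke is not on the cores $\B_{[\un 0,\un n]}$ inside $\N\T(X)$ themselves (there $\psi_{\un 0}(a)$ and $\psi_{\Bi}(\phi_\Bi(a))$ are distinct), but on their images in the quotient $\N\O(F^c,A,X)$, where the identity $\psi'_{\un n}(\phi_{\un n}(a))=\si(a)$ turns the connecting maps into genuine inclusions and gives $E(\B_{F^c})\simeq\varinjlim\K Y_{\un n}$; the paper makes this passage explicit. Your remark that ``$\tau|_{\fI_{F^c}}=0$ is what lets the resulting functional be well defined on the quotient'' shows you see the issue, but the phrase ``connecting inclusions'' of the $\B_{[\un 0,\un n]}$ is misleading before quotienting. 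Also, recovering $\tau$ from $\Phi^F(\tau)$ requires evaluating at $Q_F\pi(a)Q_F$ (where only the $\ell(\umu)=\un 0$ term survives via property (ii)), not merely at $\pi(a)$; your ``at $\un n=\un m=\un 0$'' should be read in that sense.
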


The proof follows from a number of steps.
Henceforth we write $\Phi \equiv \Phi^F$.

\begin{lemma}
Let $\tau \in \Tr_\be^F(A)$ with $\tau|_{\fI_{F^c}} = 0$.
Consider the C*-subalgebra
\begin{equation*}
\B_{F^c} := \ol{\spn}\{t(X_{\un{k}}) t(X_{\un{w}})^* \mid \un{k}, \un{w} \perp F\}
\end{equation*}
of the fixed point algebra $\N\T(X)^\ga$.
Then $\tau$ extends to a gauge invariant state $\wt\tau$ on $\B_{F^c}$ that satisfies the KMS-condition (\ref{eq:kms3}).
\end{lemma}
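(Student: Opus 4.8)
The plan is to build $\wt\tau$ by a direct limit over the cores of $\B_{F^c}$ and then transport it through the conditional expectation. First I would record the structure: the $\bT^N$-gauge action of $\N\T(X)$ leaves $\B_{F^c}$ invariant, and $E$ restricts to a conditional expectation of $\B_{F^c}$ onto
\[
\B_{F^c}^\ga := E(\B_{F^c}) = \ol{\textstyle\bigcup_{\un{k} \perp F} \B_{[\un{0}, \un{k}]}},
\qquad
\B_{[\un{0}, \un{k}]} = \ol{\spn}\{\psi_{\un{l}}(S) \mid \un{0} \leq \un{l} \leq \un{k},\ S \in \K X_{\un{l}}\},
\]
the union running over the directed set $\{\un{k} \mid \un{k} \perp F\}$. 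So it suffices to produce a gauge invariant state $\wt\tau_0$ on $\B_{F^c}^\ga$ with $\wt\tau_0(\pi(a)) = \tau(a)$ and to set $\wt\tau := \wt\tau_0 \circ E$: since $E$ kills $t(\xi_{\un{n}}) t(\eta_{\un{m}})^*$ for $\un{n} \neq \un{m}$ while $t(\xi_{\un{n}}) t(\eta_{\un{n}})^* = \psi_{\un{n}}(\theta^{X_{\un{n}}}_{\xi_{\un{n}}, \eta_{\un{n}}})$ and $t(\eta_{\un{n}})^* t(\xi_{\un{n}}) = \pi(\sca{\eta_{\un{n}}, \xi_{\un{n}}})$, the KMS-condition (\ref{eq:kms3}) for $\wt\tau$ then reduces to the single identity $\wt\tau_0(\psi_{\un{n}}(\theta^{X_{\un{n}}}_{\xi, \eta})) = e^{-|\un{n}| \be}\tau(\sca{\eta, \xi})$ for $\un{n} \perp F$, which will hold by construction.

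Next, for each $\un{k} \perp F$ I would introduce the $\tau$-trace $\operatorname{Tr}^{\un{k}}_{\tau}$ on $\K X_{\un{k}} = \L X_{\un{k}}$, namely $S \mapsto \sum_{\ell(\umu) = \un{k}} \tau(\sca{x_{\umu}, S x_{\umu}})$ for a unit decomposition $x$; a routine computation using that $\tau$ is a trace shows this is a well-defined (decomposition-independent) positive trace sending $\theta^{X_{\un{k}}}_{\xi, \eta}$ to $\tau(\sca{\eta, \xi})$. The structural fact I need is the scaling relation
\[
\operatorname{Tr}^{\un{m}}_{\tau} \circ\, i_{\un{k}}^{\un{m}} = e^{|\un{m} - \un{k}| \be}\, \operatorname{Tr}^{\un{k}}_{\tau}
\qfor \un{k} \leq \un{m} \text{ with } \un{k}, \un{m} \perp F .
\]
For $\un{m} = \un{k} + \Bi$ with $i \notin F$ I would prove this by evaluating both sides on $\theta^{X_{\un{k}}}_{\xi, \eta}$, using a product unit decomposition $\{y_l \otimes x_{i,j}\}$ of $X_{\un{m}} \simeq X_{\un{k}} \otimes X_{\Bi}$, collapsing the $l$-sum by the tracial property of $\tau$, and then invoking the defining equation $e^{\be}\tau(a) = \sum_j \tau(\sca{x_{i,j}, a x_{i,j}})$ of $\Tr_\be^F(A)$ with $a = \sum_l \sca{y_l, \xi}\sca{\eta, y_l}$; the general $\un{m}$ follows by iterating such steps. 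In particular $\operatorname{Tr}^{\un{k}}_{\tau}(\id_{X_{\un{k}}}) = e^{|\un{k}| \be}\tau(1) = e^{|\un{k}| \be}$, since $i_{\un{0}}^{\un{k}}(1) = \phi_{\un{k}}(1) = \id_{X_{\un{k}}}$.

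With this in hand I would define, for $\un{n} \perp F$,
\[
\wt\tau_{\un{n}}(b) := e^{-|\un{n}| \be}\operatorname{Tr}^{\un{n}}_{\tau}(p_{\un{n}}\, b\, p_{\un{n}}),
\qquad b \in \B_{[\un{0}, \un{n}]},
\]
reading $p_{\un{n}} b p_{\un{n}}$ (with $p_{\un{n}} \colon \F X \to X_{\un{n}}$) as an element of $\K X_{\un{n}}$. This is manifestly bounded, unital (by the value of $\operatorname{Tr}^{\un{n}}_{\tau}(\id)$ above) and positive, since $b \geq 0 \Rightarrow p_{\un{n}} b p_{\un{n}} \geq 0$ and $\operatorname{Tr}^{\un{n}}_{\tau}$ is positive. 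Because an element $b = \sum_{\un{l} \leq \un{n}} \psi_{\un{l}}(S_{\un{l}})$ acts on the $X_{\un{n}}$-summand of $\F X$ as $\sum_{\un{l} \leq \un{n}} i_{\un{l}}^{\un{n}}(S_{\un{l}})$, the scaling relation forces $\wt\tau_{\un{n}}(b) = \sum_{\un{l} \leq \un{n}} e^{-|\un{l}| \be}\operatorname{Tr}^{\un{l}}_{\tau}(S_{\un{l}})$; this simultaneously shows that $\wt\tau_{\un{n}}$ does not depend on the $\un{n}$ chosen with $b \in \B_{[\un{0},\un{n}]}$ and that the $\wt\tau_{\un{n}}$ are compatible under the inclusions $\B_{[\un{0},\un{n}]} \subseteq \B_{[\un{0},\un{n}']}$. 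Hence they glue to a state $\wt\tau_0$ of $\B_{F^c}^\ga$ with $\wt\tau_0(\pi(a)) = \operatorname{Tr}^{\un{0}}_{\tau}(a) = \tau(a)$ and $\wt\tau_0(\psi_{\un{n}}(\theta^{X_{\un{n}}}_{\xi, \eta})) = e^{-|\un{n}| \be}\tau(\sca{\eta, \xi})$, and $\wt\tau := \wt\tau_0 \circ E$ is the claimed extension (uniqueness of such $\wt\tau$ follows from Proposition \ref{P:char}).

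The hard part will be the scaling relation: the scalar identity $e^{\be}\tau(a) = \sum_j \tau(\sca{x_{i,j}, a x_{i,j}})$ for $i \notin F$ must be upgraded to the operatorial identity $\operatorname{Tr}^{\un{k} + \Bi}_{\tau} \circ\, i_{\un{k}}^{\un{k} + \Bi} = e^{\be}\operatorname{Tr}^{\un{k}}_{\tau}$ on all of $\K X_{\un{k}}$, which forces the passage to a product unit decomposition and the permutation of inner-product factors under $\tau$; one must also first verify that $\operatorname{Tr}^{\un{k}}_{\tau}$ is independent of the (non-orthogonal, non-unique) unit decomposition for this to make sense. The remaining ingredients — directedness and compatibility of the cores, the direct limit, the transport through $E$, and the reduction of (\ref{eq:kms3}) — are bookkeeping. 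I note that only the equations of $\Tr_\be^F(A)$ indexed by $i \notin F$ are visibly used in this step; the finiteness $c_{\tau, \be}^F < \infty$ and the vanishing $\tau|_{\fI_{F^c}} = 0$ should enter only at the next stage, when $\wt\tau$ is promoted to an equilibrium state of $\N\T(X)$ living on $\sca{Q_F}$.
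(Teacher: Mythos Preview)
Your argument is correct and follows the same direct-limit strategy as the paper, but with one genuine difference worth noting. The paper first passes to the quotient $q\colon\N\T(X)\to\N\O(F^c,A,X)$, observes that the induced product system $Y=\{\,\overline{q\,t(X_{\un n})}\,\}_{\un n\perp F}$ over $q\pi(A)$ is \emph{regular}, and then uses Pimsner's identification $\N\O(Y)^\ga\simeq\varinjlim(\K Y_{\un n},\,\otimes\id)$ to define compatible states $\tau'_{\un n}$ on the successive $\psi'_{\un n}(\K X_{\un n})$; the hypothesis $\tau|_{\fI_{F^c}}=0$ is used precisely to make these $\tau'_{\un n}$ well-defined on the quotient images, and the final extension is $\wt\tau=\wt\tau'\,qE$. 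You instead stay in the Fock picture: your compression $b\mapsto p_{\un n}bp_{\un n}\in\K X_{\un n}$ is manifestly well-defined without any quotient, and your scaling relation $\operatorname{Tr}^{\un m}_\tau\circ i_{\un k}^{\un m}=e^{|\un m-\un k|\be}\operatorname{Tr}^{\un k}_\tau$ replaces the regularity of $Y$. Both routes land on the same $\wt\tau$ (it is determined by $\wt\tau(\psi_{\un n}(\theta_{\xi,\eta}))=e^{-|\un n|\be}\tau(\sca{\eta,\xi})$ and gauge invariance), and your closing remark is right: your version actually proves the lemma without invoking $\tau|_{\fI_{F^c}}=0$. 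What the paper's detour through the quotient buys is that $\wt\tau$ is \emph{built} to factor through $\N\O(F^c,A,X)$, which is exactly the extra information used downstream in Theorem~\ref{T:para 3}; in your approach that factorization would have to be checked separately.
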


\begin{proof}
It suffices to extend $\tau$ on $E(\B_{F^c})$.
Let $q \colon \N\T(X) \to \N\O(F^c, A, X)$ be the canonical quotient map.
For convenience set 
\begin{equation*}
\si := q \pi, \; s := q t \qand \psi' = q \psi.
\end{equation*}
Let $B = \si(A)$ and $Y_{\un{n}} = \ol{s(X_{\un{n}})}$.
Now we see that $Y = \{Y_{\un{n}}\}_{\un{n} \perp F}$ is a product system of finite rank.
Moreover it is injective.
Indeed for $i \notin F$ the covariance on $\N\O(F^c,A,X)$ gives that
\[
\sum_{j=1}^{d_i} s(x_{i,j}) s(x_{i,j})^* = \psi_{\Bi}'(\phi_X(1_A)) = \si(1_A) = 1.
\]
Therefore if $\si(a) \in \ker \phi_{Y, \Bi}$ then $\si(a) = \si(a) \sum_{j=1}^{d_i} s(x_{i,j}) s(x_{i,j})^* = 0$.

We claim that the identity representation $\id \colon Y \to \N\O(F^c, A, X)$ gives the inclusion $\N\O(B, Y) \subseteq \N\O(F^c, A ,X)$.
As $Y$ is regular we have that $\N\O(B, Y) = \N\O(Y)$.
The identity representation is trivially injective on $B$ and admits a gauge action.
Moreover for every $i \perp F$ we have
\[
\id(b)(I - \sum_{j=1}^{d_i} \id(s(x_{i,j})) \id(s(x_{i,j}))^*) = b q(Q_\Bi) = 0.
\]
By \cite{DK18} then $(\id_B, \id_Y)$ is covariant along all directions and thus by \cite{SY11} it lifts to a faithful representation of $\N\O(B, Y)$.

Similar to the one-variable case for regular C*-correspondences \cite{Pim97} we obtain that the fixed point algebra of $\N\O(Y)$ can be written as a direct limit.
Namely, we have that
\[
\varinjlim (\K Y_{\un{n}}, \otimes \id)
\simeq
\N\O(Y)^\ga
=
\ol{\spn} \{\psi'_{\un{n}}(\K X_{\un{n}}) \mid \un{n} \perp F \}
=
E(\B_{F^c}),
\]
where
\[
\otimes \id_{Y_{\un{m}}} \colon \K Y_{\un{n}} \to \K Y_{\un{n} + \un{m}} : 
\theta_{\xi_{\un{n}}, \eta_{\un{n}}}^{X_{\un{n}}} \mapsto \theta_{\xi_{\un{n}}, \eta_{\un{n}}}^{X_{\un{n}}} \otimes \id_{Y_{\un{m}}}
=
\sum_{\ell(\umu) = \un{m}} \theta_{\xi_{\un{n} x_{\umu}}, \eta_{\un{n}} x_{\umu}}^{X_{\un{n}}}.
\]
Therefore we obtain the diagram
\[
\xymatrix@C=1.5cm{
\K Y_{\un{n}} \ar[rr]^{\otimes \id_{Y_{\un{m}}}} & & \K Y_{\un{n} + \un{m}} \ar[d]^{\simeq} \\
\psi_{\un{n}}'(\K X_{\un{n}}) \ar@{-->}[rr]^{\iota_{\un{n}}^{\un{n} + \un{m}}} \ar[u]^{\simeq} & & \psi_{\un{n} + \un{m}}'(\K X_{\un{n} + \un{m}})
}
\]
where the induced map $\iota_{\un{n}}^{\un{n} + \un{m}}$ is given by
\[
\iota_{\un{n}}^{\un{n} + \un{m}} (s(\xi_{\un{n}}) s(\eta_{\un{n}})^*)
=
\sum_{\ell(\umu) = \un{m}} s(\xi_{\un{n}}) s(x_{\umu}) s(x_{\umu})^* s(\eta_{\un{n}})^*.
\]
In order to extend $\tau$ to a state on $E(\B_{F^c})$ we have to find states $\tau'_{\un{n}} \colon \psi_{\un{n}}'(\K X_{\un{n}}) \to \bC$ for every $\un{n} \perp F$ that are compatible with the direct limit connecting maps, i.e., that they satisfy 
\[
\tau'_{\un{n} + \Bi} \iota_{\un{n}}^{\un{n} + \Bi} = \tau'_{\un{n}}
\foral 
i \notin F.
\]

For the first step $\tau$ defines a tracial state $\tau'_{\un{0}}$ on $\si(A)$ with $\tau = \tau'_{\un{0}} \si$ as it factors through $q$.
Moreover it is clear that if $\un{n} \perp F$ then
\[
\tau'_{\un{0}} \si(a) = e^{- |\un{n}| \be} \sum_{\ell(\umu) = \un{n}} \tau(\sca{x_{\umu}, a x_{\umu}}).
\]
For every $\un{n} \perp F$ we define the functional $\tau'_{\un{n}}$ on $\psi_{\un{n}}'(\K X_{\un{n}})$ by
\begin{equation*}
\tau'_{\un{n}} \psi'_{\un{n}}(k_{\un{n}}) := 
e^{-|\un{n}|\be} \sum_{\ell(\umu) = \un{n}} \tau(\sca{x_{\umu}, k_{\un{n}} x_{\umu}}) \FOR k_{\un{n}} \in \K X_{\un{n}}.
\end{equation*}
If $\psi_{\un{n}}(k_{\un{n}}) \in \ker q$ then $\pi(\sca{x_{\umu}, k_{\un{n}} x_{\umu}}) = t(x_{\umu})^* \psi_{\un{n}}(k_{\un{n}}) t(x_{\umu}) \in \ker q \cap \pi(A)$ and thus $\tau'_{\un{n}}$ is well defined.
Note also that $\tau'_{\un{n}}$ does not depend on the choice of the decomposition.
For if $\{y_{\unu} \mid \ell(\unu) = \un{n}\}$ is another decomposition of $X_{\un{n}}$ then
\begin{align*}
\sum_{\ell(\umu) = \un{n}} \tau(\sca{x_{\umu}, k_{\un{n}} x_{\umu}})
& =
\sum_{\ell(\umu) = \un{n}} \sum_{\ell(\unu) = \un{n}} \tau(\sca{x_{\umu}, k_{\un{n}} y_{\unu}} \sca{y_{\unu}, x_{\umu}} ) \\
& =
\sum_{\ell(\unu) = \un{n}} \sum_{\ell(\umu) = \un{n}} \tau(\sca{y_{\unu}, x_{\umu}} \sca{x_{\umu}, k_{\un{n}} y_{\unu}} )
 =
\sum_{\ell(\unu) = \un{n}} \tau(\sca{y_{\unu}, k_{\un{n}} y_{\unu}} ).
\end{align*}
Every $\tau'_{\un{n}}$ is a state since for every positive contraction $k_{\un{n}}$ we obtain
\[
e^{- |\un{n}| \be} \sum_{\ell(\umu) = \un{n}} \tau(\sca{x_{\umu}, k_{\un{n}} x_{\umu}})
\leq
e^{- |\un{n}| \be} \sum_{\ell(\mu) = \un{n}} \tau(\sca{x_{\umu}, x_{\umu}})
=
\tau(1)
=
1.
\]
Now we see that every $\tau'_{\un{n}}$ satisfies 
\begin{align*}
\tau'_{\un{n}}(s(\xi_{\un{n}}) s(\eta_{\un{n}})^*)
& =
e^{-|\un{n}| \be} \sum_{\ell(\umu) = \un{n}} \tau(\sca{x_{\umu}, \theta^{X_{\un{n}}}_{\xi_{\un{n}}, \eta_{\un{n}}} x_{\umu}}) \\
& =
e^{-|\un{n}| \be} \sum_{\ell(\umu) = \un{n}} \tau(\sca{\eta_{\un{n}}, x_{\umu}} \sca{x_{\umu}, \xi_{\un{n}}}) 
 =
e^{-|\un{n}| \be} \tau(\sca{\eta_{\un{n}}, \xi_{\un{n}}}).
\end{align*}
Now we can verify that $\tau'_{\un{n} + \Bi} \iota_{\un{n}}^{\un{n} + \Bi} = \tau'_{\un{n}}$ for $i \notin F$ by computing on rank one operators:
\begin{align*}
\tau'_{\un{n} + \Bi} \iota_{\un{n}}^{\un{n} + \Bi} (s(\xi_{\un{n}}) s(\eta_{\un{n}})^*)
& =
\sum_{j =1}^{d_i} \tau'_{\un{n} + \Bi}(s(\xi_{\un{n}}) s(x_{i,j}) s(x_{i,j})^* s(\eta_{\un{n}})^*) \\
& =
e^{-(|\un{n}| + 1) \be} \sum_{j=1}^{d_i} \tau(\sca{\eta_{\un{n}} \otimes x_{i,j}, \xi_{\un{n}} \otimes x_{i,j}}) \\
& =
e^{-(|\un{n}| + 1) \be} \sum_{j=1}^{d_i} \tau(\sca{x_{i,j}, \sca{\eta_{\un{n}}, \xi_{\un{n}}} x_{i,j}}) \\
& =
e^{-|\un{n}| \be} \tau(\sca{\eta_{\un{n}}, \xi_{\un{n}}}) 
=
\tau'_{\un{n}}(s(\xi_{\un{n}}) s(\eta_{\un{n}})^*).
\end{align*}

Write $\wt\tau' := \varinjlim \tau'_{\un{n}}$ for the induced state on $E(\B_{F^c})$ that extends every $\tau'_{\un{n}}$.
We see that $\wt\tau'$ satisfies the KMS-condition since
\begin{align*}
\wt\tau'(s(\xi_{\un{n}}) s(\eta_{\un{n}})^*)
& =
\tau'_{\un{n}} (s(\xi_{\un{n}}) s(\eta_{\un{n}})^*)
=
e^{-|\un{n}| \be} \tau(\sca{\eta_{\un{n}}, \xi_{\un{n}}})
=
e^{-|\un{n}| \be} \wt\tau'( s(\eta_{\un{n}})^* s(\xi_{\un{n}})).
\end{align*}
Therefore we get that the functional
\begin{equation}
\wt\tau := \wt\tau' q E
\end{equation}
defines a gauge invariant state on $\B_{F^c}$ that satisfies the KMS-condition, and clearly $\wt \tau \pi = \tau$.
\end{proof}

\begin{proof}[\bf Proof of Theorem \ref{T:para} (1)]
It is clear that $\wt\tau(Q_{\Bi}) = 0$ for every $i \notin F$.
If $F = \mt$ we stop the construction here and deduce the weak*-homeomorphism
\[
\Phi^\infty \colon \Avt_\be(A) \cap \{\tau \in \Tr(A) \mid \tau|_{\fI_{\{1, \dots, N\}}} = 0\}
\to
\GEq_\be^\infty(\N\T(X))
: \tau \mapsto \wt\tau. \qedhere
\]
\end{proof}
If $F \neq \mt$ then we consider the projection $Q_F = \prod_{i \in F} (1 - P_{\Bi})$ and we will construct the equilibrium state by using the statistical approximations on $Q_F$.
For every $\un{n} \in F$ we define the C*-correspondence
\[
Z_{\un{n}} := \ol{t(X_{\un{n}}) \B_{F^c}}
\text{ over }
\B_{F^c} := \ol{\spn}\{t(X_{\un{k}}) t(X_{\un{w}})^* \mid \un{k}, \un{w} \perp F\},
\]
where the bimodule structure is induced from $\N\T(X)$.
Since $\B_{F^c}$ is unital we have that $t(X_{\un{n}}) \subseteq t(X_{\un{n}}) \B_{F^c}$ and thus for $\un{n}, \un{m} \in F$ we derive
\[
t(X_{\un{n}}) t(X_{\un{m}}) \B_{F^c}
\subseteq
t(X_{\un{n}}) \B_{F^c} t(X_{\un{m}}) \B_{F^c}
\subseteq
t(X_{\un{n} + \un{m}}) \B_{F^c}.
\]
Thus $\{Z_{\un{n}}\}_{\un{n} \in F}$ defines a product system.

By the Gauge-Invariant-Uniqueness-Theorem we have $\N\T(X) = \N\T(Z)$.
Indeed first notice that the map $\id \colon Z \to \N\T(X)$ defines a Nica-covariant representation with a gauge action.
Moreover $\ca(\id_{\B_{F^c}}, \id_Z) = \N\T(X)$ since
\[
t(X_{\un{n}}) t(X_{\un{m}})^* \subseteq t(X_{\un{n}_F}) \B_{F^c} t(X_{\un{m}_F})^*
\foral 
\un{n}, \un{m} \in \bZ_+^N.
\]
Finally we need to verify that $\B_{F^c} \cap \B_{(\un{0}, \infty]}^Z = (0)$ where
\[
\B_{(\un{0}, \infty]}^Z := \ol{\spn}\{ t(\xi_{\un{n}}) b t(\eta_{\un{n}})^* \mid \un{0} \neq \un{n} \in F, b \in \B_{F^c} \}.
\]
To reach contradiction let $g \in \B_{F^c} \cap \B_{(\un{0}, \infty]}^Z$ so that
\[
Q_F g Q_F \in Q_F \B_{(\un{0}, \infty]}^Z Q_F = (0).
\]
Let $\un{k}, \un{w} \perp F$ such that $g = \psi(k_{\un{k}, \un{w}}) + g'$ and $\un{k}$ or $\un{w}$ is minimal with $0 \neq k_{\un{k}, \un{w}} \in \K(X_{\un{w}}, X_{\un{k}})$.
We then compute
\begin{align*}
\K(X_{\un{w}}, X_{\un{k}})
\ni
k_{\un{k}, \un{w}}
=
p_{\un{k}} Q_F \psi(k_{\un{k}, \un{w}}) Q_F p_{\un{w}}
=
p_{\un{k}} Q_F g Q_F p_{\un{w}}
=
0.
\end{align*}
However this gives the contradiction
\[
\|\psi(k_{\un{k}, \un{w}})\| =
\sup_{\un{m} \in \bZ_+^N} \| k_{\un{k}, \un{w}} \otimes \id_{X_\un{m}} \| = 0.
\]

Consequently $\N\T(X)$ acts on the Fock space $\F Z = \sumoplus \{ Z_{\un{n}} \mid \un{n} \in F \}$.
Let us keep track of the inclusions by writing
\[
\iota_{\un{n}} \colon Z_{\un{n}} \to \F Z,
\]
and write $(\wt\pi, \wt t)$ for the representation of $\N\T(X)$ with
\[
\wt\pi(a) \iota_{\un{m}}( t(\xi_{\un{m}}) b') = \iota_{\un{m}}(\pi(a) t(\xi_{\un{m}}) b')
\qand
\wt{t}(t(\xi_{\un{n}})) \iota_{\un{m}}( t(\xi_{\un{m}}) b )
=
\iota_{\un{n}_F + \un{m}}(t(\xi_{\un{n}}) t(\xi_{\un{m}}) b).
\]
Consider the GNS-representation $(H_{\wt\tau}, \rho_{\wt\tau}, x_{\wt\tau})$ related to the constructed $\wt\tau$ and define
\[
(\rho, v) : = (\wt\pi \otimes I, \wt{t} \otimes I) \text{ acting on } \H := \F Z \otimes_{\rho_{\wt\tau}} H_{\wt\tau}.
\]
For every $\umu$ with $\ell(\umu) \in F$ let the vector state $\vphi_{\umu}$ given by
\[
\vphi_{\umu}(f) : = \sca{\iota_{\ell(\umu)}( t(x_{\umu})) \otimes x_{\wt\tau}, (\rho \times v)(f) \big[\iota_{\ell(\umu)}( t(x_{\umu})) \otimes x_{\wt\tau}) \big]}_{\H}
\qfor
f \in \N\T(X).
\]
We use the $\vphi_{\umu}$ to define the functional
\begin{equation}
\Phi_\tau(f) := (c_{\tau, \be}^F)^{-1} \sum \{ e^{- |\umu| \be} \vphi_{\umu}(f) \mid \ell(\umu) \in F \}. 
\end{equation}
For $f = 1_A$ we have
\[
\vphi_{\umu}(\pi(1_A)) = \wt\tau(t(x_{\umu})^* t(x_{\umu})) = \tau(\sca{x_{\umu}, x_{\umu}}),
\]
and therefore
\[
\Phi_\tau(\pi(1_A)) = (c_{\tau, \be}^F)^{-1} \sum \{ e^{- |\umu| \be} \tau(\sca{x_{\umu}, x_{\umu}}) \mid \ell(\umu) \in F \} = 1.
\]
This guarantees that $\Phi_\tau$ is a well defined state on $\N\T(X)$.
In order to show that $\Phi_\tau \in \GEq_\be^F(\N\T(X))$ we require the following properties for the $\vphi_{\umu}$:

\smallskip
\noindent
$\bullet$ (i) For every $\un{m} \in F$ and $\ell(\umu) \in F$ we have that
\begin{align*}
\wt{t}(t(\xi_{\un{m}})^*) \iota_{\ell(\umu)}(t(x_{\umu}))
=
\de_{\un{m}, \un{m} \wedge \ell(\umu)}
\cdot \iota_{\ell(\umu) - \un{m}} (t(\xi_{\un{m}})^* t(x_{\umu})).
\end{align*}

\smallskip
\noindent
$\bullet$ (ii) For every $\un{0} \neq \un{m} \in F$ we have
\[
(\wt\pi \times \wt{t})(Q_F) \iota_{\un{m}}(t(\ze_{\un{m}}) b) = 0,
\]
while $(\wt\pi \times \wt{t})(Q_F) \iota_{\un{0}}(\pi(1)) = \iota_{\un{0}}(\pi(1))$.
Therefore we have
\[
\vphi_{\umu}(Q_F b Q_F)
=
\de_{\ell(\umu), \un{0}} \wt\tau(b) 
\foral b \in \B_{F^c}.
\]

\smallskip
\noindent
$\bullet$ (iii) Recall that $\wt \tau = \wt\tau E$ on $\B_{F^c}$.
For every $\un{n}, \un{m} \in \bZ_+^N$ we get that
\begin{align*}
\vphi_{\umu}(t(\xi_{\un{n}}) t(\eta_{\un{m}})^*)
& = \\
& \hspace{-1.5cm} =
\vphi_{\umu}(t(\xi_{\un{n}_F}) \big[ t(\xi_{\un{n}_{F^c}}) t(\eta_{\un{m}_{F^c}})^* \big] t(\eta_{\un{m}_{F}})^*) \\
& \hspace{-1.5cm} =
\de_{\un{n}_F, \un{n}_F \wedge \ell(\umu)} \cdot \de_{\un{m}_F, \un{m}_F \wedge \ell(\umu)} \cdot \wt \tau E(t(x_{\umu})^* t(\xi_{\un{n}_F}) \big[ t(\xi_{\un{n}_{F^c}}) t(\eta_{\un{m}_{F^c}})^* \big] t(\eta_{\un{m}_{F}})^* t(x_{\umu})) \\
& \hspace{-1.5cm} =
\de_{\un{n}, \un{m}} \cdot \de_{\un{n}_F, \un{n}_F \wedge \ell(\umu)}
\wt\tau(t(x_{\umu})^* t(\xi_{\un{n}}) t(\eta_{\un{m}})^* t(x_{\umu})).
\end{align*}

\begin{lemma}\label{L:technical}
With the aforementioned notation, if $\un{r} \in F$ and $\un{n}, \un{m} \in \bZ_+^N$  then
\begin{equation}
\sum_{\ell(\umu) = \un{r}} \vphi_{\umu}(t(\xi_{\un{n}}) t(\eta_{\un{m}})^*)
=
\de_{\un{n}, \un{m}} \cdot \de_{\un{n}_F, \un{n}_F \wedge \un{r}} \cdot e^{-|\un{n}_{F^c}|\be} \sum_{\ell(\unu) = \un{r} - \un{n}_F} 
\vphi_{\unu} (t(\eta_{\un{m}})^* t(\xi_{\un{n}})).
\end{equation}
Consequently $\Phi_\tau$ attains the stated form with respect to the unit decomposition $x = \{x_{i,j} \mid j=1, \dots, d_i, i=1, \dots, N\}$ and thus it satisfies the KMS-condition.
\end{lemma}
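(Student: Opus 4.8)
The displayed identity is the mechanism turning $\Phi_\tau$ into a KMS state, so I would build the argument around it: (a) derive the identity from property~(iii); (b) sum it over $\un{r}\in F$ with weights $(c_{\tau,\be}^F)^{-1}e^{-|\un{r}|\be}$ to obtain the KMS-condition~(\ref{eq:kms3}) for $\Phi_\tau$; (c) use gauge-invariance of $\Phi_\tau$ together with Proposition~\ref{P:char} to promote this to the full KMS-condition and read off the closed form. By property~(iii) both sides of the displayed identity are zero unless $\un{n}=\un{m}$ and $\un{n}_F\leq\un{r}$, so I assume these. Using $t(\eta_{\un{n}})^* t(\xi_{\un{n}})=\pi(\sca{\eta_{\un{n}},\xi_{\un{n}}})$, $\vphi_{\unu}(\pi(a))=\wt\tau(t(x_{\unu})^*\pi(a)t(x_{\unu}))=\tau(\sca{x_{\unu},ax_{\unu}})$ and $\sca{x_{\unu},\sca{\eta_{\un{n}},\xi_{\un{n}}}x_{\unu}}=\sca{\eta_{\un{n}}\otimes x_{\unu},\xi_{\un{n}}\otimes x_{\unu}}$, property~(iii) reduces the claim to
\[
\sum_{\ell(\umu)=\un{r}}\wt\tau(t(x_{\umu})^* t(\xi_{\un{n}})t(\eta_{\un{n}})^* t(x_{\umu}))
=
e^{-|\un{n}_{F^c}|\be}\sum_{\ell(\umu'')=\un{r}-\un{n}_F}\tau(\sca{\eta_{\un{n}}\otimes x_{\umu''},\xi_{\un{n}}\otimes x_{\umu''}}).
\]

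\textbf{The core identity.} I would first do the case $\un{n}\in F$, so $\un{n}_{F^c}=\un{0}$. Since a re-ordering of a unit decomposition is again a unit decomposition, split each $x_{\umu}$ with $\ell(\umu)=\un{r}$ as $x_{\umu'}x_{\umu''}$ with $\ell(\umu')=\un{n}$, $\ell(\umu'')=\un{r}-\un{n}$, the pair running over all such; then $t(x_{\umu})^* t(\xi_{\un{n}})=t(x_{\umu''})^*\pi(\sca{x_{\umu'},\xi_{\un{n}}})$ and $t(\eta_{\un{n}})^* t(x_{\umu})=\pi(\sca{\eta_{\un{n}},x_{\umu'}})t(x_{\umu''})$, so with $\wt\tau(t(x_{\umu''})^*\pi(b)t(x_{\umu''}))=\wt\tau(\pi(\sca{x_{\umu''},bx_{\umu''}}))=\tau(\sca{x_{\umu''},bx_{\umu''}})$ the left side becomes $\sum_{\umu',\umu''}\tau(\sca{x_{\umu''},\phi_{\un{r}-\un{n}}(\sca{x_{\umu'},\xi_{\un{n}}}\sca{\eta_{\un{n}},x_{\umu'}})x_{\umu''}})$. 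Now $\theta^{X_{\un{r}-\un{n}}}_{u,v}\mapsto\tau(\sca{v,u})$ is a trace on $\K X_{\un{r}-\un{n}}=\L X_{\un{r}-\un{n}}$ (this is exactly where traciality of $\tau$ enters), hence $a\mapsto\sum_{\ell(\umu'')=\un{r}-\un{n}}\tau(\sca{x_{\umu''},\phi_{\un{r}-\un{n}}(a)x_{\umu''}})$ is a trace on $A$, and a trace does not distinguish $\sum_{\umu'}\sca{x_{\umu'},\xi_{\un{n}}}\sca{\eta_{\un{n}},x_{\umu'}}$ from $\sum_{\umu'}\sca{\eta_{\un{n}},x_{\umu'}}\sca{x_{\umu'},\xi_{\un{n}}}=\sca{\eta_{\un{n}},\xi_{\un{n}}}$; this collapses the $\umu'$-sum and yields the right side. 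For general $\un{n}$ I would first extract the $F^c$-directions by writing $t(\xi_{\un{n}})t(\eta_{\un{n}})^*=t(\xi_{\un{n}_F})\bigl[t(\xi_{\un{n}_{F^c}})t(\eta_{\un{n}_{F^c}})^*\bigr]t(\eta_{\un{n}_F})^*$, peel off the $\un{n}_F$-layer exactly as above, arriving at $\sum_{\ell(\umu'')=\un{r}-\un{n}_F}\wt\tau(t(x_{\umu''})^* g\, t(x_{\umu''}))$ with $g\in\B_{F^c}$ a finite sum of monomials $t(X_{\un{n}_{F^c}})t(X_{\un{n}_{F^c}})^*$ (using the Fowler relation~(\ref{eq:Fow}) and that the core $\B_{F^c}$ is closed), and then commute the surviving $X_{\un{n}_{F^c}}$-creation operator through by the gauge-invariant KMS-condition~(\ref{eq:kms3}) for $\wt\tau$, which reverses the order inside $g$ and multiplies it by $e^{-|\un{n}_{F^c}|\be}$, reducing to the $F$-only case just settled. \emph{This last commutation, together with the attendant re-ordering of unit decompositions in perpendicular directions, is the main obstacle}; everything else is formal manipulation with~(\ref{eq:Fow}),~(\ref{eq:p-r}) and the in-fibre identity $t(y)^* t(\zeta)=\pi(\sca{y,\zeta})$.

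\textbf{Consequences.} Multiplying the displayed identity by $(c_{\tau,\be}^F)^{-1}e^{-|\un{r}|\be}$ and summing over $\un{r}\in F$ gives $\Phi_\tau(t(\xi_{\un{n}})t(\eta_{\un{m}})^*)$ on the left; on the right, substituting $\un{r}=\un{n}_F+\un{r}'$ (with $\un{r}'$ running over all multi-lengths in $F$) and using $e^{-|\un{r}|\be}e^{-|\un{n}_{F^c}|\be}=e^{-|\un{n}|\be}e^{-|\un{r}'|\be}$ yields $\de_{\un{n},\un{m}}\,e^{-|\un{n}|\be}\,\Phi_\tau(t(\eta_{\un{m}})^* t(\xi_{\un{n}}))$, so $\Phi_\tau$ satisfies~(\ref{eq:kms3}). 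Since each $\vphi_{\umu}$ vanishes off the $\ga$-diagonal by property~(iii), $\Phi_\tau=\Phi_\tau E$, and Proposition~\ref{P:char} promotes~(\ref{eq:kms3}) to the full KMS-condition~(\ref{eq:kms}); thus $\Phi_\tau\in\Eq_\be(\N\T(X))$. For the closed form, (\ref{eq:kms3}) gives $\Phi_\tau(t(\xi_{\un{n}})t(\eta_{\un{m}})^*)=\de_{\un{n},\un{m}}\,e^{-|\un{n}|\be}\,\Phi_\tau(\pi(\sca{\eta_{\un{n}},\xi_{\un{n}}}))$, while straight from the definition $\Phi_\tau(\pi(a))=(c_{\tau,\be}^F)^{-1}\sum_{\ell(\umu)\in F}e^{-|\umu|\be}\,\wt\tau(\pi(\sca{x_{\umu},ax_{\umu}}))=(c_{\tau,\be}^F)^{-1}\sum_{\ell(\umu)\in F}e^{-|\umu|\be}\,\tau(\sca{x_{\umu},ax_{\umu}})$; plugging $a=\sca{\eta_{\un{n}},\xi_{\un{n}}}$ and using $\sca{x_{\umu},\sca{\eta_{\un{n}},\xi_{\un{n}}}x_{\umu}}=\sca{\eta_{\un{n}}\otimes x_{\umu},\xi_{\un{n}}\otimes x_{\umu}}$ produces the stated form, and its independence of the unit decomposition follows verbatim from the decomposition-independence of the maps $\tau'_{\un{n}}$ proved in the construction of $\wt\tau$ (both rest on traciality of $\tau$ and $\sum_{\ell(\umu)=\un{n}}\theta^{X_{\un{n}}}_{x_{\umu},x_{\umu}}=1$).
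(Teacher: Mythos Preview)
Your overall architecture is sound: part~(a) is the right target, and parts~(b) and~(c) --- summing over $\un r\in F$, using Proposition~\ref{P:char} to promote~(\ref{eq:kms3}), and reading off the closed form --- go through exactly as you describe and match the paper. Your treatment of the $F$-only case $\un n\in F$ is also correct and is in fact a pleasant alternative to the paper's argument: the traciality trick on $\K X_{\un r-\un n}$ (via $\theta_{u,v}\mapsto\tau(\sca{v,u})$) collapses the $\umu'$-sum neatly.

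The gap is in the general case of the core identity. After peeling off the $\un n_F$-layer you have
\[
\sum_{\umu',\umu''}\wt\tau\bigl(t(x_{\umu''})^*\,g_{\umu'}\,t(x_{\umu''})\bigr),
\qquad
g_{\umu'}=t(a_{\umu'}\xi_{\un n_{F^c}})\,t(b_{\umu'}\eta_{\un n_{F^c}})^*\in\B_{F^c},
\]
and you propose to ``commute the surviving $X_{\un n_{F^c}}$-creation operator through by the KMS-condition for $\wt\tau$, which reverses the order inside $g$''. But the KMS property of $\wt\tau$ is a \emph{cyclic} identity $\wt\tau(fh)=e^{-\deg(f)\be}\wt\tau(hf)$ for $f,h\in\B_{F^c}$; it does not allow you to swap factors \emph{inside} an expression while holding outer factors fixed. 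Here the outer factors $t(x_{\umu''})$ have $\ell(\umu'')=\un r-\un n_F\in F$ and are \emph{not} in $\B_{F^c}$, so there is no legitimate way to invoke the KMS condition for $\wt\tau$ at this stage. Rewriting $t(x_{\umu''})^*t(a_{\umu'}\xi_{\un n_{F^c}})$ via~(\ref{eq:Fow}) lands you in $\ol{t(X_{\un n_{F^c}})t(X_{\un r-\un n_F})^*}$, which still has an $F$-supported leg, so the problem persists.

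The paper bypasses this by \emph{not} splitting $x_{\umu}$. Instead it inserts the resolution $\sum_{\ell(\unu)=\un r-\un n_F}t(x_{\unu})t(x_{\unu})^*$ between $t(\eta_{\un n})^*$ and $t(x_{\umu})$ (legitimate because $t(\eta_{\un n})^*t(x_{\umu})\in\ol{t(X_{\un r-\un n_F})t(X_{\un n_{F^c}})^*}$). The point is that then
\[
t(x_{\umu})^*\,t(\xi_{\un n})\,t(x_{\unu})\in \ol{t(X_{\un n_{F^c}})}\subseteq\B_{F^c},
\]
so the KMS condition for $\wt\tau$ applies to the pair $f=t(x_{\umu})^*t(\xi_{\un n})t(x_{\unu})$, $g=t(x_{\unu})^*t(\eta_{\un n})^*t(x_{\umu})$, yielding the factor $e^{-|\un n_{F^c}|\be}$; after cycling, the $\umu$-sum collapses via $\sum_{\ell(\umu)=\un r}t(x_{\umu})t(x_{\umu})^*\,t(\xi_{\un n})t(x_{\unu})=t(\xi_{\un n})t(x_{\unu})$. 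This is the missing idea: the resolution of the identity must be inserted so that \emph{both} resulting factors lie in $\B_{F^c}$, not outside it. Once you make this replacement, your argument and the paper's coincide from that point on.
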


\begin{proof}
Property (iii) for $\vphi_{\umu}$ implies that $\vphi_{\umu} = \vphi_{\umu} E$ and so $\Phi_\tau = \Phi_\tau E$.
Therefore let us consider the case where $\un{n} = \un{m}$.
If $\un{n}_F \not\leq \ell(\umu) \in F$ then again property (iii) yields $\vphi_{\umu}(t(\xi_{\un{n}}) t(\eta_{\un{n}})^*) = 0$.
Now suppose that $\un{n}_F \leq \ell(\umu) \in F$ with $\ell(\umu) = \un{r}$.
Since $\un{r} - \un{n}_F \in F$ we get $(\un{r} - \un{n}_F) \wedge \un{n}_{F^c} = \un{0}$ and so  
\[
t(\eta_{\un{n}})^* t(x_{\umu})
\in
\ol{t(X_{\un{n}_{F^c}})^* t(X_{\un{r} - \un{n}_{F}})}
\subseteq
\ol{t(X_{\un{r} - \un{n}_F}) t(X_{\un{n}_{F^c}})^*}.
\]
Therefore by using the approximate identity of $\psi(\K X_{\un{r} - \un{n}_F})$ on $t(X_{\un{r} - \un{n}_F})$ we get
\[
\sum_{\ell(\unu) = \un{r} - \un{n}_F} t(x_{\unu}) t(x_{\unu})^*  t(\eta_{\un{n}})^* t(x_{\umu})
=
t(\eta_{\un{n}})^* t(x_{\umu}).
\]
Now for $\ell(\unu) = \un{r} - \un{n}_F$ we have $t(\xi_{\un{n}}) t(x_{\unu}) \in t(X_{\un{n} + \un{r} - \un{n}_F}) = \ol{t(X_{\un{r}}) t(X_{\un{n}_{F^c}})}$ and so
\[
\sum_{\ell(\umu) = \un{r}} t(x_{\umu}) t(x_{\umu})^* t(\xi_{\un{n}}) t(x_{\unu})
=
t(\xi_{\un{n}}) t(x_{\unu}).
\]
Finally we have that
\begin{align*}
t(x_{\umu})^* t(\xi_{\un{n}}) t(x_{\unu})
& \in
\ol{t(X_{\un{r} - \un{n}_F})^* t(X_{\un{n}_{F^c}}) t(X_{\un{r} - \un{n}_F})}
\subseteq
\ol{t(X_{\un{n}_{F^c}})} \subseteq \B_{F^c}.
\end{align*}
Therefore we can use the KMS-condition for $\wt\tau$ and $\un{n}_{F^c}$ to obtain
\begin{align*}
\sum_{\ell(\umu) = \un{r}} \vphi_{\umu}(t(\xi_{\un{n}}) t(\eta_{\un{n}})^*)
& = 
\sum_{\ell(\umu) = \un{r}} \sum_{\ell(\unu) = \un{r} - \un{n}_F} 
\wt\tau ( \underbrace{t(x_{\umu})^* t(\xi_{\un{n}}) t(x_{\unu})}_{\in \, t(X_{\un{n}_{F^c}})} \cdot \underbrace{t(x_{\unu})^* t(\eta_{\un{n}})^* t(x_{\umu})}_{\in \, t(X_{\un{n}_{F^c}})^*}) \\
& = 
e^{-|\un{n}_{F^c}|\be} \sum_{\ell(\unu) = \un{r} - \un{n}_F} \sum_{\ell(\umu) = \un{r}} 
\wt\tau (t(x_{\unu})^* t(\eta_{\un{n}})^* \cdot t(x_{\umu}) t(x_{\umu})^* t(\xi_{\un{n}}) t(x_{\unu})) \\
& = 
e^{-|\un{n}_{F^c}|\be} \sum_{\ell(\unu) = \un{r} - \un{n}_F} 
\wt\tau (t(x_{\unu})^* t(\eta_{\un{n}})^* t(\xi_{\un{n}}) t(x_{\unu})) \\
& = 
e^{-|\un{n}_{F^c}|\be} \sum_{\ell(\unu) = \un{r} - \un{n}_F} 
\vphi_{\unu} (t(\eta_{\un{n}})^* t(\xi_{\un{n}})).
\end{align*} 
Consequently we derive
\begin{align*}
c_{\tau, \be}^F \cdot \Phi_\tau(t(\xi_{\un{n}}) t(\eta_{\un{n}})^*)
& = \\
& \hspace{-1.5cm} = 
\sum \{ e^{- |\umu| \be} \vphi_{\umu}(t(\xi_{\un{n}}) t(\eta_{\un{n}})^*) \mid \ell(\umu) \in F \} \\
& \hspace{-1.5cm} =
e^{-|\un{n}_{F^c}| \be} \sum \{ e^{- |\umu| \be} \vphi_{\unu}(t(\eta_{\un{n}})^* t(\xi_{\un{n}})) \mid \ell(\unu) = \ell(\umu) - \un{n}_F, \un{n}_F \leq \ell(\umu) \in F \} \\
& \hspace{-1.5cm} =
e^{-|\un{n}_{F^c}| \be} \sum \{ e^{-(|\umu| + |\un{n}_F|) \be} \vphi_{\umu}(t(\eta_{\un{n}})^* t(\xi_{\un{n}})) \mid \ell(\umu) \in F \} \\
& \hspace{-1.5cm} =
c_{\tau, \be}^F \cdot e^{-|\un{n}| \be} \Phi_\tau(t(\eta_{\un{n}})^* t(\xi_{\un{n}})).
\end{align*}
It follows now that $\Phi_\tau$ both has the required form and that it satisfies the KMS-condition.
\end{proof}

\begin{remark}\label{R:qizero}
We note that the form of $\Phi_\tau$ does not depend on the choice of the decompositions.
Indeed if $y = \{y_{i,j} \mid j=1, \dots, d_i', i=1, \dots, N\}$ defines a second decomposition for $X$ then for any $\un{n} \in \bZ_+^N$ (and not just for $\un{n} \in F$) we get
\begin{align*}
\sum_{\ell(\umu) = \un{n}} \tau(\sca{\eta_{\un{n}} \otimes x_{\umu}, \xi_{\un{n}} \otimes x_{\umu}})  
& = 
\sum_{\ell(\umu) = \un{n}} 
\sum_{\ell(\unu) = \un{n}} 
\tau(\sca{\eta_{\un{n}} \otimes x_{\umu}, \xi_{\un{n}} \otimes y_{\unu}} \sca{y_{\unu}, x_{\umu}}) \\
& = 
\sum_{\ell(\unu) = \un{n}} 
\sum_{\ell(\umu) = \un{n}} 
\tau(\sca{y_{\unu}, x_{\umu}} \sca{x_{\umu}, \sca{\eta_{\un{n}}, \xi_{\un{n}}} y_{\unu}}) \\
& = 
\sum_{\ell(\unu) = \un{n}} 
\tau(\sca{y_{\unu}, \sca{\eta_{\un{n}}, \xi_{\un{n}}} y_{\unu}}) 
 =
\sum_{\ell(\unu) = \un{n}} \tau(\sca{\eta_{\un{n}} \otimes y_{\unu}, \xi_{\un{n}} \otimes y_{\unu}}).
\end{align*}
In particular we have that $\sum_{\ell(\umu) = \un{n}} \tau(\sca{x_{\umu}, x_{\umu}}) = \sum_{\ell(\unu) = \un{n}} \tau(\sca{y_{\unu}, y_{\unu}})$.
Moving one step further let $i \notin F$ and $\un{n} \in F$ so that both families 
\[
\{x_{\umu} x_{i,j} \mid \ell(\umu) = \un{n}, j=1, \dots, d_i\}
\qand
\{x_{i,j} x_{\umu} \mid \ell(\umu) = \un{n}, j=1, \dots, d_i\}
\]
define unit decompositions for $X_{\un{n} + \Bi}$.
Since $\tau \in \Tr_\be^F(A)$ we conclude that
\begin{align*}
\sum_{j=1}^{d_i} \sum_{\ell(\umu) = \un{n}} \tau(\sca{x_{i,j} \otimes x_{\umu}, x_{i,j} \otimes x_{\umu}})
& =
\sum_{j=1}^{d_i} \sum_{\ell(\umu) = \un{n}} \tau(\sca{x_{i,j} x_{\umu}, x_{i,j} x_{\umu}}) \\
& =
\sum_{j=1}^{d_i} \sum_{\ell(\umu) = \un{n}} \tau(\sca{x_{\umu} x_{i,j}, x_{\umu} x_{i,j}}) \\
& =
\sum_{\ell(\umu) = \un{n}} \sum_{j=1}^{d_i} \tau(\sca{x_{i,j}, \sca{x_{\umu}, x_{\umu}} x_{i,j}}) 
 =
\sum_{\ell(\umu) = \un{n}} e^{\be} \tau(\sca{x_{\umu}, x_{\umu}}).
\end{align*}
\end{remark}

\begin{proof}[\bf Proof of Theorem \ref{T:para} (2--4)]
First we show that $\Phi_\tau$ is indeed in $\GEq_\be^F(\N\T(X))$.
By using the KMS-condition and property (ii), for $\ell(\unu) = \un{n} \in F$ we have that
\begin{align*}
c_{\tau, \be}^F \cdot \Phi_\tau(Q_F^{\un{n}})
& =
c_{\tau, \be}^F \sum_{\ell(\unu) = \un{n}} e^{-|\un{n}| \be} \Phi_\tau(Q_F t(x_{\unu})^* t(x_{\unu}) Q_F) \\
& =
\sum_{\ell(\unu) = \un{n}} e^{-|\un{n}| \be} \sum \{ e^{-|\umu| \be} \vphi_{\umu}(Q_F \pi(\sca{x_{\unu}, x_{\unu}}) Q_F) \mid \ell(\umu) \in F \} \\
& =
\sum_{\ell(\unu) = \un{n}} e^{-|\un{n}| \be} \tau(\sca{x_{\unu}, x_{\unu}}).
\end{align*}
Applying in particular for $\un{n} = \un{0}$ we get $\Phi_\tau(Q_F)^{-1} = c_{\tau, \be}^F$.
Summing over all $\un{n} \in F$ we conclude
\[
\sum_{\un{n} \in F} \Phi_\tau(Q_F^{\un{n}}) 
= (c_{\tau, \be}^F)^{-1} \sum \{ e^{-|\unu| \be} \tau(\sca{x_{\unu}, x_{\unu}}) \mid \ell(\unu) \in F\}
=
\Phi_\tau(\pi(1_A))
=
1.
\]
Now let $i \notin F$ and we will show that $\Phi_\tau(P_\Bi) = 1$.
By Remark \ref{R:qizero} we derive that
\begin{align*}
\Phi_\tau(P_\Bi)
& =
\sum_{j=1}^{d_i} \Phi_\tau(t(x_{i,j}) t(x_{i,j})^*) \\
& =
(c_{\tau,\be}^F)^{-1} \sum_{\un{w} \in \bZ_+^N} e^{-(|\un{w}| + 1) \be} \sum_{j=1}^{d_i} \sum_{\ell(\umu) = \un{w}} \tau (\sca{x_{i,j} \otimes x_{\umu}, x_{i,j} \otimes x_{\umu}}) \\
& =
(c_{\tau,\be}^F)^{-1} \sum_{\un{w} \in \bZ_+^N} e^{-(|\un{w}| + 1) \be} e^{\be} \sum_{\ell(\umu) = \un{w}} \tau (\sca{x_{\umu}, x_{\umu}})  
 =
\Phi_\tau(1) = 1.
\end{align*}

Next we show that $\Phi$ is a bijection.
For injectivity notice that property (ii) yields
\[
\Phi_\tau(Q_F \pi(a) Q_F)
=
(c_{\tau, \be}^F)^{-1} \wt\tau(\pi(a))
=
\Phi_\tau(Q_F) \tau(a).
\]
Therefore $\tau$ is uniquely determined by $\Phi_\tau$.
For surjectivity let $\vphi \in \GEq_\be^F(\N\T(X))$.
If $\vphi(Q_F) = 0$ then we would have that
\begin{align*}
\vphi(Q_F^{\un{n}}) 
= e^{-|\un{n}| \be} \sum_{\ell(\umu) = \un{n}} \vphi(Q_F t(x_{\unu})^* t(x_{\unu}) Q_F) 
\leq e^{-|\un{n}| \be} \sum_{\ell(\umu) = \un{n}} \vphi(Q_F) = 0
\end{align*}
which contradicts that $\sum_{\un{n} \in F} \vphi(Q_F^{\un{n}}) = 1$.
Hence we can define the state $\tau_{\vphi}$ on $A$ given by
\begin{equation*}
\tau_\vphi(a) := \vphi(Q_F)^{-1} \vphi(Q_F \pi(a) Q_F) \foral a \in A.
\end{equation*}
It is clear that $\tau_\vphi \in \Tr(A)$ as $\vphi$ is an equilibrium state and $Q_F \in \pi(A)'$.
Since $\vphi(Q_{\Bi}) = 0$ for all $i \notin F$ we get $\vphi|_{\fI_{F^c}} = 0$ and thus $\tau_\vphi|_{\fI_{F^c}} = 0$.
Furthermore we use the KMS-condition on $\vphi$ to get
\begin{align*}
\vphi(Q_F) \sum_{\ell(\umu) = \un{n}} \tau_{\vphi}(\sca{x_{\umu}, x_{\umu}})
& =
e^{|\un{n}| \be} \sum_{\ell(\umu) = \un{n}} \vphi(t(x_{\umu}) Q_F t(x_{\umu})^*)
=
e^{|\un{n}|\be} \vphi(Q_F^{\un{n}}),
\end{align*}
and therefore
\[
c_{\tau, \be}^F
=
\sum \{e^{- |\umu| \be} \tau_{\vphi}(\sca{x_{\umu}, x_{\umu}}) \mid \ell(\umu) \in F \}
=
\vphi(Q_F)^{-1} \sum_{\un{n} \in F} \vphi(Q_F^{\un{n}}) = \vphi(Q_F)^{-1}.
\]
Furthermore as $\vphi(Q_\Bi) = 0$ for $i \notin F$ we get that $\vphi(P_\Bi f) = \vphi(f)$ for all $f \in \N\T(X)$.
Hence by the KMS-condition on $\vphi$ and equation (\ref{eq:p-r}) we derive
\begin{align*}
e^{\be} \tau_\vphi(\pi(a))
& =
e^{\be} \vphi(Q_F)^{-1} \vphi(Q_F \pi(a) Q_F) 
 =
e^{\be} \vphi(Q_F)^{-1} \vphi(P_\Bi Q_F \pi(a) Q_F) \\
& =
\vphi(Q_F)^{-1} \sum_{j=1}^{d_i} \vphi(t(x_{i,j})^* Q_F \pi(a) Q_F t(x_{i,j})) \\
& =
\sum_{j=1}^{d_i} \vphi(Q_F)^{-1} \vphi(Q_F t(x_{i,j})^* \pi(a) t(x_{i,j}) Q_F) 
 =
\sum_{j=1}^{d_i} \tau_\vphi(\sca{x_{i,j}, a x_{i,j}}).
\end{align*}
Consequently $\tau_{\vphi} \in \Tr_\be^F(A)$ and we can now form the induced state $\Phi_{\tau_{\vphi}}$.
We wish to show that $\vphi = \Phi_{\tau_{\vphi}}$ and conclude surjectivity.
Since they are both gauge-invariant, Proposition \ref{P:char} asserts that it suffices to show that they agree on $\pi(A)$.
Since $\sum_{\un{n} \in F} \vphi(Q_F^{\un{n}}) = 1$ we get that $\vphi(\pi(a)) = \sum_{\un{n} \in F} \vphi(Q_F^{\un{n}} \pi(a))$ and we can deduce
\begin{align*}
\vphi(\pi(a))
& =
\lim_m \sum_{k=0}^m \bigg[ \sum \{ \vphi(t(x_{\umu}) Q_F t(x_{\umu})^* \pi(a)) \mid \ell(\umu) \in F, |\umu| = k \} \bigg] \\
& =
\lim_m \sum_{k=0}^m \bigg[ \sum \{ e^{-k \be} \vphi(Q_F t(x_{\umu})^* \pi(a) t(x_{\umu}) Q_F ) \mid \ell(\umu) \in F, |\umu| = k \} \bigg]\\
& =
\vphi(Q_F) \lim_m \sum_{k=0}^m \bigg[ \sum \{ e^{- k \be} \tau_{\vphi}(\sca{x_{\umu}, a x_{\umu}}) \mid \ell(\umu) \in F, |\umu| = k \} \bigg]\\
& =
(c_{\tau, \be}^F)^{-1} \sum \{e^{- |\umu| \be} \tau_\vphi(\sca{x_{\umu}, a x_{\umu}}) \mid \ell(\umu) \in F \}
=
\Phi_{\tau_{\vphi}}(\pi(a)).
\end{align*}

The last part on convexity follows in the same way as in \cite[Corollary 6.3]{Kak17} and it is omitted.
\end{proof}

\begin{remark}
The inverse correspondence $(\Phi^F)^{-1} \colon \GEq_\be^F(\N\T(X)) \to \Tr_\be^F(A)$ is continuous for all $F$.
This follows by the way we retrieve $\tau_\vphi$ when $F \neq \mt$, and because $\Phi(\tau)$ is an extension of $\tau$ when $F = \mt$.
Therefore $\Phi$ is a homeomoprhism when $\Tr_\be^F(A)$ is closed in $\Tr(A)$.
On the other hand $\Phi^\infty$ is always a weak*-homeomorphism as $\Avt_\be(A)$ is weak*-closed.
\end{remark}

\section{Relative Nica-Pimnser algebras}\label{S:rel}

We next apply the obtained parametrization to relative Nica-Pimsner algebras.
We say that a family $\{I_F \mid \mt \neq F \subseteq \{1, \dots, N\}\}$ of ideals of $A$ is a \emph{lattice of $\perp$-invariant ideals} if:
\begin{enumerate}
\item $I_F \subseteq I_{F'}$ when $F \subseteq F'$; and
\item $\sca{X_{\un{n}}, I_F X_{\un{n}}} \subseteq I_F$ when $\un{n} \perp F$.
\end{enumerate}
We then define the $\bT^N$-equivariant ideal
\begin{align*}
\K_I 
& := 
\sca{ \pi(a) Q_F \mid a \in I_\F, F \subseteq \{1, \dots, N\} } \\
&=
\ol{\spn}\{ t(X_{\un{n}}) \pi(a) Q_F t(X_{\un{m}})^* \mid \un{n}, \un{m} \in \bZ_+^N, a \in I_F,\mt \neq F \subseteq \{1, \dots, N\} \}.
\end{align*}
The fact that the ideal $\K_I$ attains this form as a linear space follows in the same way as in \cite[Proposition 4.6]{DK18}.
The only that is required is $\perp$-invariance and that $I_F \subseteq I_F'$ when $F \subseteq F'$.

\begin{definition}
Let $X$ be a product system of finite rank.
Let $\{I_F \mid \mt \neq F \subseteq \{1, \dots, N\}\}$ be a lattice of $\perp$-invariant ideals of $A$.
We define the \emph{relative Nica-Pimsner algebra} $\N\O(I, X)$ be the quotient of $\N\T(X)$ by the $\bT^N$-equivariant ideal $\K_I$.
\end{definition}

Similar to $\N\T(X)$, we use the projections $Q_F + \K_I$ to make sense of the $\Eq_\be^F(\N\O(I, X))$ simplices.
Since $\N\O(A,X)$ is a quotient of $\N\O(I, X)$, we have the same simplex of infinite-type states for $\N\O(I, X)$. 
The only restriction for the $F$-parametrization is to have the states in $\Tr_\be^F(A)$ to annihilate \emph{both} $\fI_{F^c}$ and $I_{F}$.

\begin{theorem}\label{T:para 2}
Let $X$ be a product system of finite rank over $A$ and $\be > 0$.
Let $\{I_F \mid \mt \neq F \subseteq \{1, \dots, N\}\}$ be a lattice of $\perp$-invariant ideals of $A$.
Then:

\noindent
$(1)$ Every state in $\GEq_\be(\N\O(I, X))$ admits a unique convex decomposition over the $F$-simplices $\GEq_\be^F(\N\O(I, X))$.

\noindent
$(2)$ For $F = \mt$ there is a bijection
\[
\Phi^\infty \colon \{\tau \in \Avt_\be(A) \mid \tau|_{\fI_{\{1, \dots, N\}}} = 0\}
\to
\GEq_{\be}^\infty(\N\O(I, X))
\, \text{ such that } \,
\Phi^\infty_{\tau} \pi = \tau.
\]
$(3)$ For $F \neq \mt$ there is a bijection
\[
\Phi^F \colon \{ \tau \in \Tr_\be^F(A) \mid \tau|_{\fI_{F^c} + I_{F}} = 0 \} \to \GEq_\be^F(\N\O(I, X)),
\]
that factors through $q_{\K_I} \colon \N\T(X) \to \N\O(I, X)$, arising from Theorem \ref{T:para}.\\
$(4)$ Every $\Phi^F$ for $F\neq \mt$ respects convex combinations and thus preserves the extreme points of the simplices.
\end{theorem}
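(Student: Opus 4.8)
The plan is to exploit that $\N\O(I,X)$ is by definition the quotient of $\N\T(X)$ by the gauge-equivariant ideal $\K_I$, so that pullback along the quotient map $q_{\K_I}$ identifies $\GEq_\be(\N\O(I,X))$ with $\{\vphi\in\GEq_\be(\N\T(X))\mid\vphi|_{\K_I}=0\}$. Indeed the conditional expectation $E$ descends to $\N\O(I,X)$ since $\K_I$ is gauge-invariant, and $q_{\K_I}$ carries the projections $Q_F^{\un n}$ to the projections defining the $F$-simplices of $\N\O(I,X)$, so pullback also matches $\Eq_\be^F$ and $\GEq_\be^F$ on both sides. With this dictionary, items $(1)$, $(2)$ and $(4)$ reduce quickly to Theorem \ref{T:con} and Theorem \ref{T:para}, while item $(3)$ hinges on one sub-lemma.

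For $(1)$ I would take $\vphi\in\GEq_\be(\N\O(I,X))$, pull it back to $\vphi q_{\K_I}\in\GEq_\be(\N\T(X))$, and apply the gauge-invariant version of Theorem \ref{T:con} to get $\vphi q_{\K_I}=\vphi_\infty+\sum_F\vphi_F$. Since each $\vphi_F$ is produced from $\vphi q_{\K_I}$ by a finite combinatorial formula involving only the compressions $g\mapsto Q_C^{\un n}\,g\,Q_C^{\un n}$ and $\K_I$ is an ideal, every $\vphi_F$ (hence also $\vphi_\infty$) still annihilates $\K_I$; thus each descends to $\GEq_\be^F(\N\O(I,X))$, yielding the decomposition of $\vphi$, and uniqueness follows by lifting any competing decomposition back through $q_{\K_I}$ and invoking uniqueness in Theorem \ref{T:con}. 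For $(2)$ I would observe that $\K_I\subseteq\sca{Q_\Bj\mid j=1,\dots,N}$, because each generator $\pi(a)Q_{F'}$ lies in $\sca{Q_\Bj}$ for any $j\in F'$; an infinite-type state, having $\vphi(Q_\Bj)=0$ for all $j$, annihilates $\sca{Q_\Bj\mid j}$ by the Cauchy--Schwarz and KMS argument already carried out in Proposition \ref{P:states ideals}, hence annihilates $\K_I$ automatically. So $\GEq_\be^\infty(\N\O(I,X))\cong\GEq_\be^\infty(\N\T(X))$ and Theorem \ref{T:para}$(1)$ applies verbatim.

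The core of $(3)$ is: for $\tau\in\Tr_\be^F(A)$ with $\tau|_{\fI_{F^c}}=0$ one has $\Phi^F_\tau|_{\K_I}=0$ if and only if $\tau|_{I_F}=0$, where $\Phi^F_\tau$ is the state on $\N\T(X)$ from Theorem \ref{T:para}$(2)$. The forward direction is immediate: $Q_F\in\pi(A)'$ gives $\pi(a)Q_F=Q_F\pi(a)Q_F$, and Theorem \ref{T:para}$(2)$ gives $\Phi^F_\tau(Q_F\pi(a)Q_F)=(c_{\tau,\be}^F)^{-1}\tau(a)$, so $\Phi^F_\tau|_{\K_I}=0$ forces $\tau|_{I_F}=0$. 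For the converse, gauge-invariance of $\Phi^F_\tau$ and of $\K_I$ reduces the claim to checking $\Phi^F_\tau$ on $E(\K_I)$, which is the closed span of the elements $t(\xi_{\un n})\pi(a)Q_{F'}t(\eta_{\un n})^*$ with $a\in I_{F'}$, $\mt\neq F'\subseteq\{1,\dots,N\}$; one KMS move (moving $t(\xi_{\un n})$ past $\pi(a)Q_{F'}t(\eta_{\un n})^*$, which lies in $\ol{t(X)t(X)^*}$, as in the proof of Proposition \ref{P:char}) rewrites $\Phi^F_\tau$ of this element as $e^{-|\un n|\be}\Phi^F_\tau(\pi(c)Q_{F'})$ with $c=a\sca{\eta_{\un n},\xi_{\un n}}\in I_{F'}$. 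So it suffices to see $\Phi^F_\tau(\pi(c)Q_{F'})=0$ for every $\mt\neq F'$ and $c\in I_{F'}$. If $F'\not\subseteq F$, pick $j\in F'\bsl F$: then $Q_{F'}\leq Q_\Bj$ and $\Phi^F_\tau(Q_\Bj)=0$, so $\Phi^F_\tau(Q_{F'})=0$ and Cauchy--Schwarz finishes. If $F'\subseteq F$, the description of $\Phi^F_\tau$ via the vector states $\vphi_{\umu}$ in the proof of Theorem \ref{T:para} gives
\[
\Phi^F_\tau(\pi(c)Q_{F'})=(c_{\tau,\be}^F)^{-1}\sum\bigl\{e^{-|\umu|\be}\,\tau(\sca{x_{\umu},\phi_{\ell(\umu)}(c)x_{\umu}})\ \bigm|\ \ell(\umu)\in F,\ \ell(\umu)\perp F'\bigr\},
\]
and each summand vanishes: $\ell(\umu)\perp F'$ and $\perp$-invariance of $I_{F'}$ give $\sca{x_{\umu},\phi_{\ell(\umu)}(c)x_{\umu}}\in\sca{X_{\ell(\umu)},I_{F'}X_{\ell(\umu)}}\subseteq I_{F'}\subseteq I_F$, which $\tau$ annihilates. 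Granting the sub-lemma, composing with Theorem \ref{T:para}$(2)$ yields the bijection in $(3)$ onto $\{\tau\in\Tr_\be^F(A)\mid\tau|_{\fI_{F^c}+I_F}=0\}$, manifestly factoring through $q_{\K_I}$, and $(4)$ then follows word-for-word as in Theorem \ref{T:para}$(3)$ since the parametrization is the induced one.

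I expect the main obstacle to be the converse of the sub-lemma, and within it the point that $\perp$-invariance together with the nesting $I_{F'}\subseteq I_F$ for $F'\subseteq F$ is exactly what forces the surviving terms (those with $\ell(\umu)\in F$ and $\ell(\umu)\perp F'$) into $I_F$; by contrast the reduction to these terms, via gauge-invariance and a single KMS move, and the case split on whether $F'\subseteq F$ (using $Q_{F'}\leq Q_\Bj$ for $j\in F'$), are routine once Proposition \ref{P:char} and the explicit form of $\Phi^F_\tau$ are in hand.
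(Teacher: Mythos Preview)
Your proposal is correct and follows essentially the same route as the paper: pullback along $q_{\K_I}$ to reduce everything to Theorems \ref{T:con} and \ref{T:para}, then the core sub-lemma that $\Phi^F_\tau|_{\K_I}=0$ iff $\tau|_{I_F}=0$, proved by reducing to $\Phi^F_\tau(\pi(c)Q_{F'})=0$ and splitting on whether $F'\subseteq F$. The only notable difference is in how you obtain the formula for $\Phi^F_\tau(\pi(c)Q_{F'})$ when $F'\subseteq F$: the paper expands $Q_{F'}=\sum_{D\subseteq F'}(-1)^{|D|}P_D$, computes each $\Phi^F_\tau(\pi(c)P_D)$ via the explicit form of $\Phi^F_\tau$, and then applies inclusion--exclusion to isolate the terms with $\ell(\umu)\perp F'$; you instead read the formula off directly from the vector-state description by observing that $(\wt\pi\times\wt t)(Q_{F'})$ projects $\F Z$ onto the summands $Z_{\un m}$ with $\un m\perp F'$. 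Both computations are valid and arrive at the same expression, after which the use of $\perp$-invariance and the lattice inclusion $I_{F'}\subseteq I_F$ is identical.
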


\begin{proof}
Let us write $\K \equiv \K_I$ for simplicity.
Since $q_\K \colon \N\T(X) \to \N\O(I, X)$ is gauge-invariant we have that $\vphi' \in \GEq_\be^F(\N\O(I, X))$ if and only if there exists a $\vphi \in \GEq_\be^F(\N\T(X))$ such that $\vphi' q_\K = \vphi$.
The case $F = \mt$ gives the same infinite-type states.
Hence we consider the case where $F \neq \mt$.

If $\vphi' \in \GEq_\be^F(\N\O(I, X))$ then $\vphi = \vphi' q_\K \in \GEq_\be^F(\N\T(X))$ defines a unique state $\tau_{\vphi} \in \Tr_\be^F(A)$ such that $\tau_{\vphi}|_{\fI_{F^c}} = 0$ from Theorem \ref{T:para}.
We need to show that $\tau_{\vphi}|_{I_{F}} = 0$.
Indeed if $a \in I_{F}$ then $\pi(a)Q_F \in \K$ and so
\[
\tau_{\vphi}(a) = [\vphi' q_\K(Q_F)]^{-1} \vphi' q_\K(Q_F \pi(a) Q_F) = 0.
\]
Conversely let $\tau \in \Tr_\be^F(A)$ that annihilates both $\fI_{F^c}$ and $I_{F}$.
Construct the induced $\vphi \equiv \Phi_\tau^F$ from Theorem \ref{T:para}.
We need to show that $\vphi|_{\K} = 0$ and this will induce the required $\vphi' \in \GEq_\be^F(\N\O(I, X))$.
By using the KMS-condition as in Proposition \ref{P:states ideals} it suffices to show that
\[
\vphi(\pi(a) Q_C) = 0 \foral a \in I_{C} \textup{ and } \mt \neq C \subseteq \{1, \dots, N\}.
\]
First suppose that $C \cap F^c \neq \mt$.
By construction we already have that $\vphi(Q_{\Bi}) = 0$ for any $i \in C \setminus F$.
Hence $\vphi(Q_C) = 0$ and therefore 
\[
\vphi(\pi(a) Q_C) = 0 \foral a \in I_C.
\]
Next let $C \subseteq F$.
For any $D \subseteq C$ we get
\begin{align*}
\vphi(\pi(a) P_D)
& =
\sum_{\ell(\unu) = \un{1}_D} e^{-|D| \be} \vphi(t(x_{\unu})^* \pi(a) t(x_{\unu})) \\
& =
(c_{\tau, \be}^{F})^{-1} \sum \{ e^{-(|\umu| + |D|) \be} \tau (\sca{x_{\unu} \otimes x_{\umu}, a x_{\unu} \otimes x_{\umu}}) \mid \ell(\unu) = \un{1}_D, \ell(\umu) \in F \}.
\end{align*}
Thus by using the alternating sums we get
\begin{align*}
\vphi(\pi(a) Q_C)
& = 
\sum_{D \subseteq C} (-1)^{|D|} \vphi(\pi(a) P_D) \\
& =
(c_{\tau, \be}^{F})^{-1} \sum_{D \subseteq C} (-1)^{|D|} \bigg[ \sum \{ e^{-|\umu| \be} \tau(\sca{x_{\umu}, a x_{\umu}}) \mid \un{1}_D \leq \ell(\umu) \in F \} \bigg] \\
& =
(c_{\tau, \be}^F)^{-1} \sum \{ e^{-|\umu| \be} \tau(\sca{x_{\umu}, a x_{\umu}}) \mid \ell(\umu) \perp C \}. 
\end{align*}
For the last equality, if $\ell(\umu) \neq \un{0}$ with $\ell(\umu) \cap C \neq \mt$ then
\[
\{D \subseteq C \mid \un{1}_D \leq \ell(\umu)\}
=
\{D \mid D \subseteq \supp \ell(\umu) \cap C\}.
\]
Thus the part corresponding to this $\ell(\umu)$ contributes zero to the sum, namely
\[
\sum_{D \subseteq C, \un{1}_D \leq \supp \ell(\umu)} (-1)^{|D|} e^{-|\umu| \be} \tau(\sca{x_{\umu}, a x_{\umu}})
=
e^{-|\umu| \be} \tau(\sca{x_{\umu}, a x_{\umu}}) \sum_{D \subseteq \supp \ell(\umu) \cap C} (-1)^{|D|} 
=
0.
\]
Hence the only part that survives is for $D = \mt$ and $\ell(\umu) \perp C$.
However $a \in I_{C}$ and so for $\ell(\umu) \perp C$ we get that
\[
\sca{x_{\umu}, a x_{\umu}} \in I_{C} \subseteq I_{F},
\]
giving $\tau(\sca{x_{\umu}, a x_{\umu}}) = 0$.
Therefore we conclude that $\vphi(\pi(a) Q_C) = 0$ also in this case, and the proof is complete.
\end{proof}

\section{Extending to the full parametrizations}\label{S:non gi}

We may now obtain the parametrizations of the full simplices of the $F$-equilibrium states.
First we note that $\vphi$ is of infinite type if and only if $\vphi(Q_{\Bi} f) = 0$ for all $f \in \N\T(X)$ and $i = 1, \dots, N$, if and only if
\[
\vphi(f) = \vphi(P_{\Bi} f) = e^{-\be} \sum_{j=1}^{d_i} \vphi(t(x_{i,j})^* f t(x_{i,j})) 
\foral
f \in \N\T(X),
i=1, \dots, N.
\]
This the furthest we can go at this generality and thus we now pass to the non-infinite types.
Now each $F$-part will depend on the product system we constructed for the proof of Theorem \ref{T:para}.

\begin{theorem}\label{T:para 3}
Let $X$ be a product system of finite rank over $A$ and $\be > 0$.
For $\mt \neq F \subseteq \{1, \dots, N\}$ let the $F$-product system $Z$ over $\B_{F^c}$ constructed concretely in $\N\T(X)$ by
\[
Z_{\un{n}} = \ol{t(X_{\un{n}}) \B_{F^c}}
\; \text{ for } \;
\un{n} \in F
\qand
\B_{F^c} = \ol{\spn}\{t(X_{\un{k}}) t(X_{\un{w}})^* \mid \un{k}, \un{w} \perp F\}.
\]
Then the parametrization of Theorem \ref{T:para} lifts to a parametrization
\[
\{\wt\tau \in \Eq_\be(\B_{F^c}) \mid \wt\tau\pi \in \Tr_\be^F(A), \wt\tau|_{\fI_{F^c}'} = 0\}
\to 
\Eq_\be^F(\N\T(X)),
\]
where
\[
\fI_{F^c}' := \ker\{ \B_{F^c} \to \N\O(F^c, A,X)\}.
\]
Likewise the parametrization of the relative Cuntz-Nica-Pimsner algebras from Theorem \ref{T:para 2} lifts to
\[
\{\wt\tau \in \Eq_\be(\B_{F^c}) \mid \wt\tau\pi \in \Tr_\be^F(a), \wt\tau|_{\fI_{F^c}' + I_F'} = 0\}
\to 
\Eq_\be^F(\N\O(I, X)),
\]
where
\[
I_{F}' := \ol{\spn}\{t(X_{\un{k}}) \pi(I_F) t(X_{\un{w}})^* \mid \un{k}, \un{w} \perp F\}.
\]
\end{theorem}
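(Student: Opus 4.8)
The plan is to recycle the construction in the proof of Theorem~\ref{T:para}, isolating the single place where gauge-invariance is used. There the equilibrium state $\Phi^F_\tau$ is produced in two stages: first one lifts $\tau\in\Tr_\be^F(A)$ to a KMS state $\wt\tau$ on the core $\B_{F^c}$ by a direct-limit argument, and \emph{this stage is the only one that uses gauge-invariance} (of $\tau$, and of the resulting $\wt\tau$); second, one runs the statistical approximation on $Q_F$ --- the Fock space $\F Z$ of the $F$-product system $Z$ over $\B_{F^c}$, the GNS representation of $\wt\tau$, the vector states $\vphi_{\umu}$ and their weighted average --- and this stage takes $\wt\tau$ as a black box. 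For $F=\mt$ there is nothing to prove: the asserted characterisation of $\Eq_\be^\infty(\N\T(X))$ is exactly the one recorded at the start of this section, immediate from $\vphi(Q_{\Bi})=0$, Cauchy--Schwarz, and the identity $\vphi(t(x_{i,j})g)=e^{-\be}\vphi(g\,t(x_{i,j}))$, valid on all of $\N\T(X)$ for a KMS state (cf. the proof of Proposition~\ref{P:char}). So fix $\mt\neq F\subseteq\{1,\dots,N\}$.

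Given $\wt\tau\in\Eq_\be(\B_{F^c})$ with $\wt\tau\pi\in\Tr_\be^F(A)$ and $\wt\tau|_{\fI_{F^c}'}=0$, I define $\Phi_{\wt\tau}(f):=(c_{\wt\tau\pi,\be}^F)^{-1}\sum\{e^{-|\umu|\be}\vphi_{\umu}(f)\mid\ell(\umu)\in F\}$ by the same formula as in Theorem~\ref{T:para}, now built from the GNS data of the given $\wt\tau$; finiteness of $c_{\wt\tau\pi,\be}^F$ and $\vphi_{\umu}(\pi(1_A))=\wt\tau(t(x_{\umu})^*t(x_{\umu}))=(\wt\tau\pi)(\sca{x_{\umu},x_{\umu}})$ make $\Phi_{\wt\tau}$ a state. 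Properties (i)--(ii) for the $\vphi_{\umu}$ carry over verbatim, while property (iii) becomes
\[
\vphi_{\umu}(t(\xi_{\un{n}})t(\eta_{\un{m}})^*)=\de_{\un{n}_F,\un{m}_F}\,\de_{\un{n}_F,\un{n}_F\wedge\ell(\umu)}\,\wt\tau\big(t(x_{\umu})^*\,t(\xi_{\un{n}})t(\eta_{\un{m}})^*\,t(x_{\umu})\big),
\]
the point being that only the $F$-coordinates are pinned down by the grading of $\F Z$, the $F^c$-discrepancy now being carried by $\wt\tau$ (which is no longer gauge-invariant). With (iii) in this form I re-run the computation of Lemma~\ref{L:technical}, but I can no longer pass to the diagonal $\un{n}=\un{m}$; instead I verify the KMS relation (\ref{eq:kms2}) for $\Phi_{\wt\tau}$ directly, by decomposing every monomial into its $F$- and $F^c$-parts, shuffling the $F$-parts past $\B_{F^c}$ using (\ref{eq:p-r}) and the Fowler relation (\ref{eq:Fow}), and feeding the remaining $F^c$-part into the KMS condition of $\wt\tau$ on $\B_{F^c}$. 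Together with $\Phi_{\wt\tau}(Q_F)=(c_{\wt\tau\pi,\be}^F)^{-1}$ and $\Phi_{\wt\tau}(Q_{\Bi})=0$ for $i\notin F$ (from Remark~\ref{R:qizero} and $\wt\tau\pi\in\Tr_\be^F(A)$, as in Theorem~\ref{T:para}), this gives $\Phi_{\wt\tau}\in\Eq_\be^F(\N\T(X))$.

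For bijectivity, injectivity is immediate because (ii) gives $\Phi_{\wt\tau}(Q_F\,b\,Q_F)=(c_{\wt\tau\pi,\be}^F)^{-1}\wt\tau(b)$ for $b\in\B_{F^c}$. For surjectivity, given $\vphi\in\Eq_\be^F(\N\T(X))$ one has $\vphi(Q_F)\neq0$ (else $\sum_{\un{n}\in F}\vphi(Q_F^{\un{n}})=1$ fails, as in the proof of Theorem~\ref{T:para}), and I set $\wt\tau(b):=\vphi(Q_F)^{-1}\vphi(Q_F\,b\,Q_F)$ on $\B_{F^c}$. Since $Q_F$ commutes with $\B_{F^c}$ --- seen on the Fock representation, as $P_{\Bi}$ and $t(X_{\un{p}})$ commute for $i\in F$, $\un{p}\perp F$ --- and $\vphi$ is $(\si,\be)$-KMS, $\wt\tau$ is a state satisfying the KMS condition; moreover $\wt\tau\pi$ is the trace $\tau_{\vphi}$ from the proof of Theorem~\ref{T:para}, shown there to lie in $\Tr_\be^F(A)$ and to annihilate $\fI_{F^c}$ with no use of gauge-invariance, and $\wt\tau|_{\fI_{F^c}'}=0$ because $\fI_{F^c}'=\B_{F^c}\cap\sca{Q_{\Bi}\mid i\notin F}$ and $\vphi$ kills that ideal. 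Finally $\Phi_{\wt\tau}=\vphi$: both lie in $\Eq_\be^F(\N\T(X))$ and agree on $Q_F\B_{F^c}Q_F$, and by Proposition~\ref{P:states ideals} such a state is determined by its restriction to $Q_F\B_{F^c}Q_F$ --- one writes $\sca{Q_F}=\ol{\spn}\{t(\xi_{\un{r}})\,Q_F\,b\,Q_F\,t(\eta_{\un{s}})^*\mid\un{r},\un{s}\in F,\ b\in\B_{F^c}\}$ and reduces every such value to one on $Q_F\B_{F^c}Q_F$ by repeated use of the KMS condition and (\ref{eq:Fow}). Convexity passes through exactly as in Theorem~\ref{T:para}. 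The relative case is formally identical to the proof of Theorem~\ref{T:para 2} with $\tau$ replaced by $\wt\tau\pi$: $\vphi'\in\Eq_\be^F(\N\O(I,X))$ iff $\vphi=\vphi'q_{\K_I}\in\Eq_\be^F(\N\T(X))$ kills $\K_I$, and under the bijection just described this translates, via $Q_F\,t(\xi_{\un{k}})\pi(a)t(\eta_{\un{w}})^*\,Q_F=t(\xi_{\un{k}})\,(\pi(a)Q_F)\,t(\eta_{\un{w}})^*$ and the alternating-sum computation of Theorem~\ref{T:para 2}, into $\wt\tau|_{I_F'}=0$ (which in turn forces $(\wt\tau\pi)|_{I_F}=0$ on taking $\un{k}=\un{w}=\un{0}$), and conversely.

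The step I expect to be the main obstacle is the KMS verification for $\Phi_{\wt\tau}$: stripping out gauge-invariance removes the diagonal reduction of Lemma~\ref{L:technical}, so the $F^c$-off-diagonal monomials have to be handled by carefully separating the $F$- and $F^c$-directions and invoking the KMS condition of $\wt\tau$ --- this is where the bookkeeping with (\ref{eq:Fow}) and (\ref{eq:p-r}) is heaviest. The only other delicate point is the fact that an $\Eq_\be^F(\N\T(X))$-state is pinned down by its values on $Q_F\B_{F^c}Q_F$, needed to conclude $\Phi_{\wt\tau}=\vphi$ in the surjectivity argument.
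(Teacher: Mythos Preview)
Your proposal is correct and follows essentially the same approach as the paper: re-run the second stage of the Theorem~\ref{T:para} construction with a given $\wt\tau\in\Eq_\be(\B_{F^c})$ in place of the gauge-invariant lift, observe via the $\F Z$-grading that only the $F$-diagonal survives (your modified property~(iii)), and verify the KMS relation by splitting monomials into their $F$- and $F^c$-parts --- the paper packages this last step as the single identity (\ref{eq:b}), from which both the KMS condition and the uniqueness on $\B_{F^c}$ follow in one stroke, whereas you phrase it more narratively. For the relative case the paper takes a slightly different tack, checking that $I_F'$ is the ideal of $\B_{F^c}$ generated by $\pi(I_F)$ and is $\perp$-invariant for $Z$ (so that the framework of Theorem~\ref{T:para 2} applies verbatim with $A$ replaced by $\B_{F^c}$), while you argue directly through the bijection and the identity $Q_F\,b\,Q_F=t(\xi_{\un{k}})\pi(a)Q_F\,t(\eta_{\un{w}})^*\in\K_I$; both routes are valid and yield the same equivalence.
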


\begin{proof}
The construction is the same with that of Theorem \ref{T:para} and Theorem \ref{T:para 2} as long as we substitute $A$ with $\B_{F^c}$ and we verify the relevant points.
By Proposition \ref{P:gi} we can show that
\[
\vphi(t(\xi_{\un{n}}) t(\xi_{\un{k}}) t(\eta_{\un{w}})^* t(\eta_{\un{m}})^*) = \de_{\un{n}, \un{m}} \vphi(t(\xi_{\un{n}}) t(\xi_{\un{k}}) t(\eta_{\un{w}})^* t(\eta_{\un{m}})^*)
\; \text{ for all } \;
\un{n}, \un{m} \in F,
\un{k}, \un{w} \perp F.
\]
Hence by taking linear combinations and limits we deduce that
\[
\vphi(t(\xi_{\un{n}}) b t(\eta_{\un{m}})^*) = \de_{\un{n}, \un{m}} \vphi(t(\xi_{\un{n}}) b t(\eta_{\un{m}})^*)
\; \text{ for all } \;
\un{n}, \un{m} \in F 
\; \text{ and } \; 
b \in \B_{F^c}.
\]

Let $\wt\tau$ such that $\wt\tau\pi \in \Tr_\be^F(A)$ and $\wt\tau|_{\fI_{F^c}'} = 0$.
We need to check that the constructed $\vphi := \Phi_{\wt\tau}$ of Theorem \ref{T:para} is an equilibrium state.
For convenience let us say that an element $b \in \B_{F^c}$ is \emph{simple} if $b \in t(X_{\un{k}}) t(X_{\un{w}})^*$ for some $\un{k}, \un{w} \perp F$.
In this case we say that $b$ has degree $\deg(b) = |\un{k} - \un{w}|$.
We may proceed in the same way as in Lemma \ref{L:technical} for $\un{n}, \un{m} \in F$ and $b_1, b_2$ simple elements to obtain
\begin{equation}\label{eq:b}
\vphi(t(\xi_{\un{n}}) b_1 b_2 t(\eta_{\un{m}})^*)
=
\de_{\un{n}, \un{m}} e^{-|\un{n}_F| \be} e^{-\deg(b_1) \be} \vphi(b_2 t(\eta_{\un{m}})^* t(\xi_{\un{n}}) b_1).
\end{equation}
In particular $\vphi$ inherits the KMS condition on $\B_{F^c}$ from $\wt\tau$.
We may extend this relation by taking linear combinations and limits.

We shall show that $\vphi$ satisfies the KMS condition.
Let $\un{n}, \un{k}, \un{w} \in \bZ_+^N$ and we can apply for
\[
t(\xi_{\un{n}}) t(\xi_{\un{k}})
\in
\ol{t(X_{\un{n}_F + \un{k}_F}) \cdot t(X_{\un{n}_{F^c} + \un{k}_{F^c}})}
\qand
t(\eta_{\un{w}}) \in \ol{t(X_{\un{w}_F}) \cdot t(X_{\un{w}_{F^c}})},
\]
to get that
\begin{align*}
\vphi(t(\xi_{\un{n}}) t(\xi_{\un{k}}) t(\eta_{\un{w}})^*)
& =
\de_{\un{n}_F + \un{k}_F, \un{w}_F} e^{-|\un{n} + \un{k}| \be} \vphi(t(\eta_{\un{w}})^* t(\xi_{\un{n}}) t(\xi_{\un{k}})).
\end{align*}
As long as $\un{w}_F = \un{n}_F + \un{k}_F \geq \un{n}_F$ we have
\begin{align*}
t(\eta_{\un{w}})^* t(\xi_{\un{n}})
& \in
\ol{t(X_{\un{w}_F - \un{n}_F})^* t(X_{\un{w}_{F^c}})^* t(X_{\un{n}_{F^c}})} \\
& \subseteq
\ol{t(X_{\un{w}_F - \un{n}_F})^* t(X_{\un{n}_{F^c} - \un{n}_{F^c} \wedge \un{w}_{F^c}}) t(X_{\un{w}_{F^c} - \un{n}_{F^c} \wedge \un{w}_{F^c}})^*} \\
& \subseteq
\ol{t(X_{\un{n}_{F^c} - \un{n}_{F^c} \wedge \un{w}_{F^c}}) t(X_{\un{w}_{F^c} - \un{n}_{F^c} \wedge \un{w}_{F^c}})^* t(X_{\un{w}_F - \un{n}_F})^*}.
\end{align*}
Now we apply to the adjoint of equation (\ref{eq:b}) and thus get
\begin{align*}
\vphi(t(\xi_{\un{n}}) t(\xi_{\un{k}}) t(\eta_{\un{w}})^*)
& =
\de_{\un{n}_F + \un{k}_F, \un{w}_F} e^{-|\un{n} + \un{k}| \be} \vphi(t(\eta_{\un{w}})^* t(\xi_{\un{n}}) \cdot t(\xi_{\un{k}})) \\
& =
\de_{\un{n}_F + \un{k}_F, \un{w}_F} e^{-|\un{n} + \un{k}| \be} e^{-|\un{k}| \be} \vphi(t(\xi_{\un{k}}) t(\eta_{\un{w}})^* t(\xi_{\un{n}})) \\
& =
\de_{\un{n}_F + \un{k}_F, \un{w}_F} e^{-|\un{n}| \be} \vphi(t(\xi_{\un{k}}) t(\eta_{\un{w}})^* t(\xi_{\un{n}})).
\end{align*}
Notice that
\[
t(\xi_{\un{k}}) t(\eta_{\un{w}})^* t(\xi_{\un{n}})
\in
\ol{t(X_{\un{k} + \un{n} - \un{n} \wedge \un{w}}) t(X_{\un{w} - \un{n} \wedge \un{w}})^*}
\]
and hence
\begin{align*}
\vphi(t(\xi_{\un{k}}) t(\eta_{\un{w}})^* t(\xi_{\un{n}}))
& =
\de_{(\un{k} + \un{n} - \un{n} \wedge \un{w})_F, (\un{w} - \un{n} \wedge \un{w})_F} \vphi(t(\xi_{\un{k}}) t(\eta_{\un{w}})^* t(\xi_{\un{n}})) \\
& =
\de_{\un{n}_F + \un{k}_F, \un{w}_F} \vphi(t(\xi_{\un{k}}) t(\eta_{\un{w}})^* t(\xi_{\un{n}})),
\end{align*}
giving that
\[
\vphi(t(\xi_{\un{n}}) t(\xi_{\un{k}}) t(\eta_{\un{w}})^*)
=
e^{-|\un{n}| \be} \vphi(t(\xi_{\un{k}}) t(\eta_{\un{w}})^* t(\xi_{\un{n}})).
\]
Proposition \ref{P:char} then implies that $\vphi$ satisfies the KMS condition on $\N\T(X)$.

Injectivity of the parametrization is immediate as $\tau$ is completely identified by $\Phi(\tau)$.
Notice that $Q_F$ commutes with $\B_{F^c}$ due to equation (\ref{eq:p-r}).
Equation (\ref{eq:b}) implies that two equilibrium states that are gauge invariant along $F$ coincide if and only if they coincide on $\B_{F^c}$.
This yields surjectivity of the parametrization.

To see that the parametrization is inherited by $\N\O(I,X)$ we just need to check that $I_F'$ is the ideal generated by $\pi(I_F)$ in $\B_{F^c}$ and that it is $\perp$-invariant.
We refer to \cite[Proposition 4.6]{DK18} which gives that $\pi(I_F) t(X_{\un{m}}) \subseteq t(X_{\un{m}}) \pi(I_F)$ whenever $\un{m} \perp F$.
Hence indeed $I_F' = \ol{\B_{F^c} \pi(I_F) \B_{F^c}}$.
For invariance let $\un{m}, \un{k}, \un{w} \perp F$.
Then
\begin{align*}
t(X_{\un{m}})^* t(X_{\un{k}}) \pi(I_F) t(X_{\un{w}})^* t(X_{\un{m}})
& \subseteq
t(X_{\un{k} - \un{k} \wedge \un{m}}) t(X_{\un{m} - \un{k} \wedge \un{m}})^* \pi(I_F) t(X_{\un{m} - \un{m} \wedge \un{w}}) t(X_{\un{w} - \un{m} \wedge \un{w}})^* \\
& \subseteq
t(X_{\un{k} - \un{k} \wedge \un{m}}) t(X_{\un{x}})^* \pi(I_F) t(X_{\un{y}}) t(X_{\un{w} - \un{m} \wedge \un{w}})^* 
\subseteq I_F',
\end{align*}
where we have set
\[
\un{x} = \un{m} - \un{k} \wedge \un{m} - \un{r}, \quad
\un{y} = \un{m} - \un{m} \wedge \un{w} - \un{r}, \quad
\un{r} = (\un{m} - \un{k} \wedge \un{m}) \wedge (\un{m} - \un{m} \wedge \un{w}),
\]
and we have used invariance of $I_F$ for $\un{r} \perp F$, so that $t(X_{\un{r}})^* \pi(I_F) t(X_{\un{r}}) \subseteq \pi(I_F)$.
\end{proof}

\begin{remark}\label{R:ngi}
Recently, Christensen \cite{Chr18} parametrized the equilibrium states for higher rank graphs, without using any of the assumptions of \cite{FHR18}.
The decomposition/parametriza\-tion for the Toeplitz-Nica-Pimsner algebra that he obtains has the same form with Theorem \ref{T:con} and Theorem \ref{T:para}.
Nevertheless he achieves more by considering the action to be weighted on different fibers.

We shall show here that our main theorems accommodate this setting with a small calibration.
Fix $\un{s} = (s_1, \dots, s_N) \geq \un{0}$ and let the action
\[
\si^{(s)} \colon r \to \Aut(\N\T(X)) : r \mapsto \ga_{(\exp(i s_1 r), \dots, \exp(i s_N r))}.
\]
Then we obtain
\[
\si_r^{(s)}(t(\xi_{\un{n}}) t(\eta_{\un{m}})^*) = e^{- i \sca{\un{n} - \un{m}, \un{s}} r} t(\xi_{\un{n}}) t(\eta_{\un{m}})^*
\qfor
\sca{\un{w}, \un{s}} = \sum_{i=1}^N w_i s_i \foral \un{w} \in \bZ^N.
\]
In particular we have $|\umu| = \sca{\ell(\umu), \un{1}}$.
Notice that the projections $P_F, Q_F$ are invariant under this new action $\si^{(s)}$.

If $s_i \neq 0$ for all $i \in \{1, \dots, N\}$ then our arguments apply verbatim by substituting $|\cdot |$ with $\sca{\cdot, \un{s}}$; the only that is required is linearity of $\sca{\cdot, \un{s}}$.
Now suppose that $s_i = 0$ for some $i$, and without loss of generality let $G = \{i \mid s_i = 0\}$.
By using the Fock representation let
\[
\B_G := \ol{\spn}\{t(\xi_{\un{k}}) t(\eta_{\un{w}})^* \mid \un{k}, \un{w} \in G\}
\qand
X'_{\un{n}} := \ol{t(X_{\un{n}}) \B_G}
\foral
\un{n} \perp G,
\]
and consider the product system $X' = \{X_\un{n}'\}_{\un{n} \perp G}$.
Now we may apply Theorem \ref{T:con} and Theorem \ref{T:para} to obtain the $F$-parts for $\N\T(X')$.
However as shown in the proof of Theorem \ref{T:para} we get that $\N\T(X') = \N\T(X)$ in a canonical way, i.e.,  the $F$-projections are the same for any $F \perp G$.
Consequently if $F \perp G$ then $\N\O(F^c, \B_G, X') = \N\O(F^c, A, X)$.
Thus the required parametrization for $\N\T(X)$ is the same with Theorem \ref{T:para} with the only difference that we need to consider traces on $\B_G$ rather than just on $A$.
From there we can achieve the full parametrizations as in Theorem \ref{T:para 3}.
\end{remark}

\section{Entropy}\label{S:entropy}

Now we turn our attention to the entropy of the product system.
This will be connected to existence of equilibrium states and tracial states in $\Tr_\be^F(A)$.

\begin{definition}
Let $X$ be a product system of finite rank over $A$.
Let $x = \{x_{i,j} \mid j=1, \dots, d_i, i=1, \dots, N\}$ be a unit decomposition of $X$.
For every $\tau \in \Tr(A)$ we define the \emph{tracial entropy}
\begin{equation*}
h_X^\tau := \limsup_k \frac{1}{k} \log \bigg[ \sum_{|\umu| = k} \tau(\sca{x_{\umu}, x_{\umu}}) \bigg].
\end{equation*}
In particular for $F \subseteq \{1, \dots, N\}$ we define the \emph{$F$-tracial entropy}
\begin{equation*}
h_X^{\tau, F} := \limsup_k \frac{1}{k} \log \bigg[ \sum_{|\umu| = k, \ell(\umu) \in F} \tau(\sca{x_{\umu}, x_{\umu}}) \bigg].
\end{equation*}
We define the \emph{entropy} of the unit decomposition $x$ by
\begin{equation}
h_X^x := \limsup_k \frac{1}{k} \log \| \sum_{|\umu| = k} \sca{x_{\umu}, x_{\umu}}\|_{A},
\end{equation}
and then the \emph{strong entropy} of $X$ by
\begin{equation}
h_X^s := \inf\{ h_X^x \mid \text{ $x$ is a unit decomposition of $X$}\}.
\end{equation}
Likewise for $F \subseteq \{1, \dots, N\}$ we define the \emph{$F$-entropy} of a unit decomposition $x$ by
\begin{equation}
h_X^{x, F} := \limsup_k \frac{1}{k} \log \| \sum_{|\umu| = k, \ell(\umu) \in F} \sca{x_{\umu}, x_{\umu}}\|_{A},
\end{equation}
and the \emph{$F$-strong entropy} of $X$ by
\begin{equation}
h_X^{s, F} := \inf\{ h_X^{x,F} \mid \text{ $x$ is a unit decomposition of $X$}\}.
\end{equation}
Moreover we define the \emph{entropy} of $X$ by
\begin{equation*}
h_X := \inf\{\be >0 \mid \Eq_\be(\N\T(X)) \neq \mt \}.
\end{equation*}
\end{definition}

\begin{remark}
Using Remark \ref{R:qizero} we can see that the tracial entropies do not depend on the choice of the decomposition.
Hence, if $A$ is abelian then $h_X^{s,F} = h_X^{x,F}$ for every unit decomposition $x$.
It is also clear that if $\tau \in \Tr(A)$ with $h_X^{\tau, F} < \be$ then $c_{\tau, \be}^F <\infty$.

Furthermore the $\limsup$ for $h_X^x$ is actually a limit of a decreasing sequence.
Indeed for every $k' \leq k$ we get that
\begin{align*}
\sum_{|\umu| = k} \sca{x_{\umu}, x_{\umu}}
& =
\sum_{|\umu| = k - k'} \sca{x_{\umu}, \big( \sum_{|\unu| = k'} \sca{x_{\unu}, x_{\unu}} \big) x_{\umu}} 
 \leq
\| \sum_{|\unu| = k'} \sca{x_{\unu}, x_{\unu}} \|_A \sum_{|\umu| = k - k'} \sca{x_{\umu}, x_{\umu}},
\end{align*}
showing that the sequence $\big(\| \sum_{|\umu| = k} \sca{x_{\umu}, x_{\umu}}\|_A\big)_k$ is submultiplicative.
\end{remark}

\begin{proposition}\label{P:entropy1}
Let $X$ be a product system of finite rank over $A$.
Let $x = \{x_{i,j} \mid i=1, \dots, N, j=1, \dots, d_i\}$ be a unit decomposition of $X$.
Then for any $\tau \in \Tr(A)$ we have
\[
h_X^\tau \leq h_X^x = \max \{ h_X^{x, i} \mid i =1, \dots, N \} \leq \max \{ \log d_i \mid i =1, \dots, N \},
\]
so that
\[
h_X^\tau \leq h_X^s = \max \{ h_X^{s, i} \mid i =1, \dots, N \} \leq \max \{ \log d_i \mid i =1, \dots, N \},
\]
and likewise for their $F$-analogues.
\end{proposition}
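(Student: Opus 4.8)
The plan is to work with the positive elements $S_{\un n} := \sum_{\ell(\umu) = \un n} \sca{x_{\umu}, x_{\umu}} \in A$ and the scalars $a_k := \| \sum_{|\un n| = k} S_{\un n}\|_A$ and $a_k^{(i)} := \|S_{k\Bi}\|_A$, so that $h_X^x = \limsup_k \tfrac1k \log a_k$ and $h_X^{x,i} = \limsup_k \tfrac1k\log a_k^{(i)}$; by the Remark preceding the statement both $(a_k)_k$ and $(a_k^{(i)})_k$ are submultiplicative, so these $\limsup$'s are in fact limits equal to the infima over $k$. The estimate $h_X^\tau \le h_X^x$ is then immediate: with $S_k := \sum_{|\umu|=k}\sca{x_\umu,x_\umu} = \sum_{|\un n|=k} S_{\un n} \ge 0$ and $\tau$ a state we have $\tau(S_k) \le \|S_k\|_A = a_k$, and passing to $\tfrac1k\log(\cdot)$ and $\limsup$ gives the claim; the $F$-version is identical upon restricting all sums to multi-indices $\un n$ with $\supp \un n \subseteq F$.

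For $h_X^x = \max_i h_X^{x,i}$ one inequality is trivial: every $S_{\un n} \ge 0$ and $S_{k\Bi}$ is one of the summands of $\sum_{|\un n|=k} S_{\un n}$, so monotonicity of the norm on positive elements gives $a_k \ge a_k^{(i)}$, hence $h_X^x \ge h_X^{x,i}$ for each $i$. The reverse inequality rests on the coordinatewise product bound
\[
\|S_{\un n}\|_A \le \prod_{i=1}^N a_{n_i}^{(i)} \qfor \un n = (n_1, \dots, n_N),
\]
which I would prove by peeling off one coordinate at a time: writing $x_{\umu} = x_{\umu'} x_{N,\mu_N}$ with $\ell(\umu') = \un n - n_N\Bn$, the tensor-product inner-product formula gives $\sca{x_\umu,x_\umu} = \sca{x_{N,\mu_N}, \phi_{n_N\Bn}(\sca{x_{\umu'},x_{\umu'}})x_{N,\mu_N}}$; summing over $\umu'$ and then over $\mu_N$ yields $S_{\un n} = \sum_{|\mu_N|=n_N}\sca{x_{N,\mu_N}, \phi_{n_N\Bn}(S_{\un n - n_N\Bn})x_{N,\mu_N}}$, and since $0 \le \phi_{n_N\Bn}(S_{\un n - n_N\Bn}) \le \|S_{\un n - n_N\Bn}\|_A \cdot 1$ we get $S_{\un n} \le \|S_{\un n - n_N\Bn}\|_A \cdot S_{n_N\Bn}$, hence $\|S_{\un n}\|_A \le \|S_{\un n - n_N\Bn}\|_A \cdot a_{n_N}^{(N)}$; iterate. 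Granting this, put $c := \max_i h_X^{x,i}$ and fix $\eps > 0$; since $\tfrac1m\log a_m^{(i)} \to h_X^{x,i} \le c$ there is a constant $M$ with $a_m^{(i)} \le M e^{(c+\eps)m}$ for all $m$ and all $i$, whence
\[
a_k \le \sum_{|\un n|=k}\|S_{\un n}\|_A \le \sum_{n_1+\cdots+n_N=k}\prod_{i=1}^N a_{n_i}^{(i)} \le (k+1)^N M^N e^{(c+\eps)k},
\]
so $h_X^x = \lim_k \tfrac1k\log a_k \le c + \eps$; letting $\eps \downarrow 0$ gives $h_X^x \le c$, hence equality, and the same computation restricted to $\un n$ supported in $F$ gives $h_X^{x,F} = \max_{i\in F} h_X^{x,i}$. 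Finally $a_1^{(i)} = \|\sum_{j=1}^{d_i}\sca{x_{i,j},x_{i,j}}\|_A \le \sum_{j=1}^{d_i}\|x_{i,j}\|^2 \le d_i$ since the $x_{i,j}$ lie in the unit ball, and $h_X^{x,i} = \inf_k\tfrac1k\log a_k^{(i)} \le \log a_1^{(i)} \le \log d_i$; combining, $h_X^x = \max_i h_X^{x,i} \le \max_i\log d_i$ (and likewise over $F$).

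For the strong-entropy statements, note first that $h_X^\tau$ and $h_X^{\tau,F}$ are independent of the chosen decomposition, since Remark~\ref{R:qizero} gives $\sum_{\ell(\umu)=\un n}\tau(\sca{x_\umu,x_\umu}) = \sum_{\ell(\unu)=\un n}\tau(\sca{y_\unu,y_\unu})$ for any two decompositions $x,y$; hence from $h_X^\tau \le h_X^x$ for \emph{every} unit decomposition $x$ we obtain $h_X^\tau \le \inf_x h_X^x = h_X^s$, and similarly $h_X^{\tau,F} \le h_X^{s,F}$. For $h_X^s = \max_i h_X^{s,i}$: taking $\inf_x$ in $h_X^x = \max_i h_X^{x,i} \ge h_X^{x,i}$ gives $h_X^s \ge h_X^{s,i}$ for each $i$, so $h_X^s \ge \max_i h_X^{s,i}$; conversely, a unit decomposition of $X$ is exactly an independent choice of unit decomposition of each fibre $X_\Bi$, and $h_X^{x,i}$ depends only on the $i$-th of these, so given $\eps > 0$ one can pick $x$ with $h_X^{x,i} \le h_X^{s,i} + \eps$ for all $i$ at once, whence $h_X^s \le h_X^x = \max_i h_X^{x,i} \le \max_i h_X^{s,i} + \eps$. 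The identities $h_X^{s,F} = \max_{i\in F} h_X^{s,i} \le \max_{i\in F}\log d_i$ follow verbatim. The one genuinely technical point is the product bound $\|S_{\un n}\|_A \le \prod_i a_{n_i}^{(i)}$ together with the elementary observation that the infimum defining the strong entropy decouples over the $N$ directions; everything else is soft.
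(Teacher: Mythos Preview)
Your proof is correct and, for the central equality $h_X^x = \max_i h_X^{x,i}$, cleaner than the paper's. The paper proceeds by induction on $|F|$: having bounded $\|\sum_{|\umu|=k,\ \ell(\umu)\in F}\sca{x_\umu,x_\umu}\|$ by $p_F(k)(c+\eps)^k$, it passes to $F\cup\{i\}$ by splitting the convolution $\sum_{n=0}^k$ into three ranges $[0,k_0-1]$, $[k_0,k-k_0]$, $[k-k_0+1,k]$ and estimating each separately, manufacturing a new polynomial $p_{F\cup\{i\}}$ at every step. Your route avoids this entirely by first proving the pointwise product bound $\|S_{\un n}\|_A\le\prod_i a_{n_i}^{(i)}$ (via the peeling argument, which is correct and uses nothing beyond the tensor-product inner-product formula and positivity of $\phi$), and then absorbing the finitely many sub-threshold values of each $a_m^{(i)}$ into a single constant $M$; the polynomial factor then arises transparently as the count $|\{\un n:|\un n|=k\}|\le(k+1)^N$. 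This is shorter and conceptually clearer, at the cost of a slightly worse implicit constant.

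One further point in your favour: the paper opens with ``It suffices to show the first part'' and then never revisits the passage from $h_X^x=\max_i h_X^{x,i}$ to $h_X^s=\max_i h_X^{s,i}$. This is not automatic, since in general $\inf_x\max_i f_i(x)\neq\max_i\inf_x f_i(x)$; your explicit observation that a unit decomposition of $X$ is an independent choice of unit decompositions of the $X_{\Bi}$, and that $h_X^{x,i}$ depends only on the $i$-th coordinate of that choice, is exactly the decoupling needed and fills what is at best an omission in the paper.
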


\begin{proof}
It suffices to show the first part.
It is clear that $h_X^\tau \leq h_X^x$ since
\[
\sum_{\ell(\umu) = \un{n}} \tau(\sca{x_{\umu}, x_{\umu}}) \leq \| \sum_{\ell(\umu) = \un{n}} \sca{x_{\umu}, x_{\umu}} \|_A.
\]
Moreover it follows directly that
\[
h_X^{x, i} = \limsup_k \frac{1}{k} \log \|\sum_{|\mu_i| = k} \sca{x_{i, \mu_i}, x_{i, \mu_i}} \|_A
\leq
\limsup_k \frac{1}{k} \log \sum_{|\mu_i| = k} 1
= \log d_i.
\]
At the same time we have that
\[
h_X^{x, F}
=
\limsup_k \frac{1}{k} \log \|\sum_{|\umu| = k, \ell(\umu) \in F} \sca{x_{\umu}, x_{\umu}} \|_A
\leq
\limsup_k \frac{1}{k} \log \|\sum_{|\umu| = k} \sca{x_{\umu}, x_{\umu}} \|_A
=
h_X^x.
\]
Now set $c > 0$ such that
\[
\log c:= \max \{ h_X^{x,i} \mid i=1, \dots, N \} \leq h_X^x.
\]
For $\eps >0$ let $k_i$ such that
\[
\| \sum_{|\mu_i| = k} \sca{x_{i, \mu_i}, x_{i,\mu_i}} \|_A \leq (c + \eps)^k
\foral
k \geq k_i.
\]
Set $k_0 := \max\{k_1, \dots, k_N\}$.
We claim that for every $F$ there exists a polynomial $p_F$ (of degree $|F| - 1$) with positive co-efficients such that
\begin{equation}\label{eq:claim}
\| \sum_{|\umu| = k, \ell(\umu) \in F} \sca{x_{\umu}, x_{\umu}} \|_{A} \leq p_F(k) (c + \eps)^k, 
\foral k \geq |N| \cdot k_0.
\end{equation}
Applying then for $F = \{1, \dots, N\}$ and the induced polynomial
\[
p(k) \equiv p_{\{1, \dots, N\}}(k) = a_{N-1} k^{N-1} + \cdots + a_0
\]
will give that
\[
\log c \leq
h_X^x
\leq 
\lim_k \frac{1}{k} \log[p(k) (c+\eps)^k]
=
\lim_k \frac{1}{k} \log[k^{N-1} (c+\eps)^k]
=
\log(c+\eps),
\]
since
\[
a_{N-1} k^{N-1} \leq p(k) \leq (a_{N-1} + \dots + a_0) k^{N-1}.
\]
Taking $\eps \to 0$ yields the required $h_X^x = \log c$.
To prove the claim, recall that every $h_X^{x,F}$ is a limit of a decreasing sequence.
Set
\[
M := \sup \{ \|\sum_{|\umu| = k, \ell(\umu) \in F} \sca{x_{\umu}, x_{\umu}} \|_{A}^{1/k} \mid F \subseteq \{1, \dots, N\}, k \in \bZ_+ \}.
\]
We have already chosen $k_0$ so that the claim holds for $F = \{i\}$ with $p_{\{i\}} = 1$.
Let $F = \{i, j\}$ and $k \geq |N| \cdot k_0$.
Then $k - n \geq k_0$ for every $n \leq k_0$ and we get 
\begin{align*}
\sum_{n=0}^{k_0 - 1} 
\| \sum_{|\mu_i| = n} \sca{x_{i,\mu_i}, x_{i, \mu_i}} \|_{A} \cdot 
\| \sum_{|\mu_j| = k - n} \sca{x_{j,\mu_j}, x_{j, \mu_j}} \|_{A}
& \leq \\
& \hspace{-3cm} \leq
\sum_{n=0}^{k_0 - 1} M^n (c + \eps)^{k - n}
=
\big[\sum_{n=0}^{k_0 - 1} M^n (c + \eps)^{-n} \big] (c + \eps)^k.
\end{align*}
By a change of variable we derive the symmetrical
\begin{align*}
\sum_{n=k - k_0 + 1}^{k} 
\| \sum_{|\mu_i| = n} \sca{x_{i,\mu_i}, x_{i, \mu_i}} \|_{A} \cdot 
\| \sum_{|\mu_j| = k - n} \sca{x_{j,\mu_j}, x_{j, \mu_j}} \|_{A}
& \leq \\
& \hspace{-4cm} \leq
\sum_{n=0}^{k_0 - 1} 
\| \sum_{|\mu_i| = k - n} \sca{x_{i,\mu_i}, x_{i, \mu_i}} \|_{A} \cdot 
\| \sum_{|\mu_j| = n} \sca{x_{j,\mu_j}, x_{j, \mu_j}} \|_{A} \\
& \hspace{-4cm} \leq
\big[\sum_{n= 0}^{k_0 - 1} M^n (c + \eps)^{-n}\big] (c + \eps)^k.
\end{align*}
Moreover for the part in-between we have
\begin{align*}
\sum_{n=k_0}^{k - k_0} 
\| \sum_{|\mu_i| = n} \sca{x_{i,\mu_i}, x_{i, \mu_i}} \|_{A} \cdot 
\| \sum_{|\mu_j| = k - n} \sca{x_{j,\mu_j}, x_{j, \mu_j}} \|_{A}
& \leq \\
& \hspace{-4cm} \leq
\sum_{n=k_0}^{k - k_0} (c + \eps)^n (c + \eps)^{k-n} 
\leq 2k (c + \eps)^k.
\end{align*}
Putting these together and using submultiplicativity for the terms of $h_X^{x, \{i,j\}}$ we derive the required
\begin{align*}
\|\sum_{|\mu_i| + |\mu_j| = k} \sca{x_{i,\mu_i} \otimes x_{j, \mu_j}, x_{i,\mu_i}\otimes x_{j, \mu_j}} \|_{A}
& \leq \\
& \hspace{-4cm} \leq
\sum_{n=0}^k \|\sum_{|\mu_i| = n, |\mu_j| = k - n} \sca{x_{i,\mu_i} \otimes x_{j, \mu_j}, x_{i,\mu_i}\otimes x_{j, \mu_j}} \|_{A} \\
& \hspace{-4cm} \leq
\sum_{n=0}^{k_0 - 1} 
\| \sum_{|\mu_i| = n} \sca{x_{i,\mu_i}, x_{i, \mu_i}} \|_{A} \cdot 
\| \sum_{|\mu_j| = k - n} \sca{x_{j,\mu_j}, x_{j, \mu_j}} \|_{A} + \\
& \hspace{-3cm} +
\sum_{n=k_0}^{k - k_0} 
\| \sum_{|\mu_i| = n} \sca{x_{i,\mu_i}, x_{i, \mu_i}} \|_{A} \cdot 
\| \sum_{|\mu_j| = k - n} \sca{x_{j,\mu_j}, x_{j, \mu_j}} \|_{A} + \\
& \hspace{-2cm} +
\sum_{n= k - k_0 + 1}^{k} 
\| \sum_{|\mu_i| = n} \sca{x_{i,\mu_i}, x_{i, \mu_i}} \|_{A} \cdot 
\| \sum_{|\mu_j| = k - n} \sca{x_{j,\mu_j}, x_{j, \mu_j}} \|_{A} \\
& \hspace{-4cm} \leq
\big[ 2 k + 2 \sum_{n=0}^{k_0 - 1} M^n (c + \eps)^{-n} \big] (c + \eps)^k.
\end{align*}
Now suppose that equation (\ref{eq:claim}) holds for some $F$ and we will show that it holds for $F \cup \{i\}$ for $i \notin F$.
As before for $k \geq |N| \cdot k_0$ and $n \leq k_0$ we have
\begin{align*}
\sum_{n=0}^{k_0 - 1} 
\| \sum_{|\mu_i| = n} \sca{x_{i,\mu_i}, x_{i, \mu_i}} \|_{A} \cdot 
\| \sum_{|\umu| = k - n, \ell(\umu) \in F} \sca{x_{\umu}, x_{\umu}} \|_{A}
& \leq \\
& \hspace{-6cm} \leq
\sum_{n=0}^{k_0 - 1} M^n p_F(k - n) (c + \eps)^{k-n} 
\leq
\big[\sum_{n=0}^{k_0 - 1} M^n (c + \eps)^{-n} \big] p_F(k) (c + \eps)^k.
\end{align*}
Here we use that $p_F$ has positive co-efficients so that $p_F(k-n) \leq p_F(k)$ for $n \leq k_0 \leq k$.
Its symmetrical is given by
\begin{align*}
\sum_{n=k - k_0 + 1}^{k} 
\| \sum_{|\mu_i| = n} \sca{x_{i,\mu_i}, x_{i, \mu_i}} \|_{A} \cdot 
\| \sum_{|\umu| = k - n, \ell(\umu) \in F} \sca{x_{\umu}, x_{\umu}} \|_{A}
& \leq \\
& \hspace{-7cm} \leq
\sum_{n=0}^{k_0 - 1} 
\| \sum_{|\mu_i| = k - n} \sca{x_{i,\mu_i}, x_{i, \mu_i}} \|_{A} \cdot 
\| \sum_{|\umu| = n, \ell(\umu) \in F} \sca{x_{\umu}, x_{\umu}} \|_{A} \\
& \hspace{-7cm} \leq
\big[\sum_{n= 0}^{k_0 - 1} M^n (c + \eps)^{-n}\big] (c + \eps)^k.
\end{align*}
Moreover we have
\begin{align*}
\sum_{n=k_0}^{k - k_0} 
\| \sum_{|\mu_i| = n} \sca{x_{i,\mu_i}, x_{i, \mu_i}} \|_{A} \cdot 
\| \sum_{|\umu| = k - n, \ell(\umu) \in F} \sca{x_{\umu}, x_{\umu}} \|_{A}
& \leq \\
& \hspace{-7cm} \leq
\sum_{n=k_0}^{k - k_0} (c + \eps)^n p_F(k-n) (c + \eps)^{k-n} 
\leq
\sum_{n= k_0}^{k - k_0} p_F(k) (c + \eps)^k
\leq 2 k p_F(k) (c + \eps)^k.
\end{align*}
We thus conclude
\begin{align*}
\|\sum_{|\mu_i| + |\umu| = k, \ell(\umu) \in F} \sca{x_{i,\mu_i} \otimes x_{\umu}, x_{i,\mu_i}\otimes x_{\umu}} \|_{A}
& \leq \\
& \hspace{-3cm} \leq
\bigg[2 k p_F(k) + (1 + p_F(k)) \sum_{n= 0}^{k_0 - 1} M^n (c + \eps)^{-n} \bigg] (c + \eps)^k.
\end{align*}
We have that the polynomial
\[
p_{F \cup \{i\}}(k) := 2 k p_F(k) + (1 + p_F(k)) \sum_{n= 0}^{k_0 - 1} M^n (c + \eps)^{-n}
\]
has positive co-efficients (and degree $1 + \deg p_F = |F|$).
This proves the inductive step and the proof is complete.
\end{proof}

\begin{proposition}\label{P:entropy2}
Let $X$ be a product system of finite rank over $A$ and let $\be > 0$.
If $\tau \in \Tr_\be^F(A)$ for some $F \subseteq \{1, \dots, N\}$ then $h_X^{\tau, F} \leq h_X^{\tau} \leq \be$.
\end{proposition}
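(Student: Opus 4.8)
The statement asserts a chain $h_X^{\tau,F}\le h_X^{\tau}\le\be$, and I would establish the two inequalities separately. The first is immediate: $\tau$ is a state and each $\sca{x_{\umu},x_{\umu}}$ is a positive element of $A$, so every summand $\tau(\sca{x_{\umu},x_{\umu}})$ is non-negative, whence for every $k$ the partial sum over $\{\umu\mid|\umu|=k,\ \ell(\umu)\in F\}$ is bounded by the partial sum over $\{\umu\mid|\umu|=k\}$; applying $\tfrac1k\log(\cdot)$ and then $\limsup_k$ gives $h_X^{\tau,F}\le h_X^{\tau}$. All the content is in proving $h_X^\tau\le\be$.

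The crucial step is the factorization identity
\[
S_{\un{n}}\ :=\ \sum_{\ell(\umu)=\un{n}}\tau(\sca{x_{\umu},x_{\umu}})\ =\ e^{|\un{n}_{F^c}|\be}\,S_{\un{n}_F}
\foral \un{n}\in\bZ_+^N .
\]
I would obtain it from the auxiliary identity, proved by induction on $|\un{m}|$,
\[
\sum_{\ell(\umu)=\un{m}}\tau(\sca{x_{\umu},b\,x_{\umu}})\ =\ e^{|\un{m}|\be}\,\tau(b)
\foral b\in A,\ \un{m}\perp F ,
\]
where $b\,x_{\umu}$ stands for $\phi_{\un{m}}(b)x_{\umu}$. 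The case $\un{m}=\un{0}$ is trivial. For the step, pick $i\in\supp\un{m}$ (so $i\notin F$), use the coherence isomorphism $X_{\un{m}}\simeq X_{\Bi}\otimes X_{\un{m}-\Bi}$ together with independence of the trace sum from the chosen unit decomposition (Remark \ref{R:qizero}) to replace $\{x_{\umu}\mid\ell(\umu)=\un{m}\}$ by the unit decomposition of $X_{\un{m}}$ built from $\{x_{i,j}\}_j$ and a unit decomposition of $X_{\un{m}-\Bi}$, peel off the $X_{\Bi}$-factor via $\sca{x_{i,j}\otimes x_{\un{\rho}},\,b(x_{i,j}\otimes x_{\un{\rho}})}=\sca{x_{\un{\rho}},\,\sca{x_{i,j},b\,x_{i,j}}\,x_{\un{\rho}}}$, apply the inductive hypothesis to $X_{\un{m}-\Bi}$ (here $\un{m}-\Bi\perp F$) with coefficient $\sca{x_{i,j},b\,x_{i,j}}$, and finally invoke the defining relation $e^\be\tau(b)=\sum_{j=1}^{d_i}\tau(\sca{x_{i,j},b\,x_{i,j}})$ of $\Tr_\be^F(A)$, valid since $i\notin F$. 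Then, writing $X_{\un{n}}\simeq X_{\un{n}_F}\otimes X_{\un{n}_{F^c}}$, using decomposition-independence once more, and $\sca{x_{\unu}\otimes x_{\umu},x_{\unu}\otimes x_{\umu}}=\sca{x_{\umu},\sca{x_{\unu},x_{\unu}}x_{\umu}}$ with $\ell(\unu)=\un{n}_F$ and $\ell(\umu)=\un{n}_{F^c}$, the auxiliary identity with $\un{m}=\un{n}_{F^c}$ and $b=\sca{x_{\unu},x_{\unu}}$ delivers $S_{\un{n}}=e^{|\un{n}_{F^c}|\be}S_{\un{n}_F}$.

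With the factorization in hand, I would sum over $|\un{n}|=k$. Writing $\un{n}=\un{n}_F+\un{n}_{F^c}$ and grouping by $l=|\un{n}_F|$ gives
\[
\sum_{|\umu|=k}\tau(\sca{x_{\umu},x_{\umu}})\ =\ \sum_{l=0}^{k}e^{(k-l)\be}\,r_{k-l}\,U_l ,
\]
where $U_l:=\sum_{\ell(\umu)\in F,\,|\umu|=l}\tau(\sca{x_{\umu},x_{\umu}})$ and $r_{k-l}:=\#\{\un{q}\in\bZ_+^N\mid\supp\un{q}\subseteq F^c,\ |\un{q}|=k-l\}$. Since $c_{\tau,\be}^F=\sum_{l\ge0}e^{-l\be}U_l<\infty$ and all terms are non-negative, each $e^{-l\be}U_l\le c_{\tau,\be}^F$, so $U_l\le c_{\tau,\be}^F\,e^{l\be}$ and the displayed sum is at most $c_{\tau,\be}^F\,e^{k\be}\sum_{j=0}^{k}r_j$; as $\sum_{j=0}^{k}r_j$ is bounded by a polynomial $p(k)$ (of degree $|F^c|$), we get $\sum_{|\umu|=k}\tau(\sca{x_{\umu},x_{\umu}})\le c_{\tau,\be}^F\,p(k)\,e^{k\be}$. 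Dividing by $k$, taking logarithms and $\limsup_k$, and using $\tfrac1k\log p(k)\to0$, yields $h_X^\tau\le\be$, which together with the first inequality completes the proof.

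The one genuinely delicate point is the factorization identity: one must handle the product-system coherence isomorphisms $X_{\un{n}}\simeq X_{\un{n}_F}\otimes X_{\un{n}_{F^c}}$ carefully and invoke independence of the trace sums from the choice of unit decomposition so that regrouping and reordering of the generators is legitimate; everything else is bookkeeping with the geometric factor $e^{k\be}$ and the polynomial count $r_j$. The degenerate values $F=\mt$ and $F=\{1,\dots,N\}$ need no separate treatment: the estimates hold verbatim, and whenever a sum involved is empty the corresponding $\limsup$ is $-\infty\le\be$.
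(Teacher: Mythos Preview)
Your proof is correct and follows the same core strategy as the paper: use the averaging relation for $i\notin F$ to peel off the $F^c$-part of each multi-index, then control the remaining $F$-part via the finiteness of $c_{\tau,\be}^F$. Your execution is in fact cleaner than the paper's in two respects. First, you state and prove the factorization $S_{\un{n}}=e^{|\un{n}_{F^c}|\be}S_{\un{n}_F}$ explicitly and then correctly track the multiplicity $r_{k-l}=\#\{\un{q}\in F^c:|\un{q}|=k-l\}$ when summing over $|\un{n}|=k$; the paper's displayed identity for $\sum_{|\umu|=k,\,|\ell(\umu)_F|=m}\tau(\sca{x_\umu,x_\umu})$ omits this factor (it writes $e^{(k-m)\be}$ where $r_{k-m}e^{(k-m)\be}$ is what the averaging relation actually gives), a harmless slip since $r_j$ is polynomial but one you avoid. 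Second, the paper introduces an $\eps>0$ and splits the sum over $m$ at a threshold $k_0$ to get a bound of the form $[k_0M^{k_0}+2k(1+\eps)^k]^{1/k}e^\be$, whereas you use the single crude estimate $U_l\le c_{\tau,\be}^Fe^{l\be}$ (valid termwise from $c_{\tau,\be}^F<\infty$) to obtain $\sum_{|\umu|=k}\tau(\sca{x_\umu,x_\umu})\le c_{\tau,\be}^F\,p(k)\,e^{k\be}$ directly. Both routes give $h_X^\tau\le\be$, but yours reaches it with less bookkeeping.
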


\begin{proof}
Let $\tau \in \Tr_\be^F(A)$ for $\be >0 $.
As we are adding positive reals it follows that $h_X^{\tau, F} \leq h_X^{\tau}$.
If $F = \mt$ then it is immediate that
\[
h_X^\tau = \limsup_k \frac{1}{k} \log \bigg[ \sum_{|\umu| = k} \tau(\sca{x_{\umu}, x_{\umu}}) \bigg]
= \limsup_k \frac{1}{k} \log e^{k \be} = \be.
\]
Let $F \neq \mt$ and fix $\eps >0$.
Since $c_{\tau, \be}^F <\infty$ there exists a $k_0 > 0$ such that 
\[
\bigg[ e^{-k\be} \sum_{|\umu| = k, \ell(\umu) \in F} \tau(\sca{x_{\umu}, x_{\umu}}) \bigg]^{1/k}
\leq (1 + \eps)
\foral k \geq k_0.
\]
Set also
\[
M := \max \{1, \bigg[ e^{-k\be} \sum_{|\umu| = k, \ell(\umu) \in F} \tau(\sca{x_{\umu}, x_{\umu}}) \bigg]^{1/k} \mid k = 0, \dots, k_0-1 \}.
\]
For $k \geq k_0$ and $m \leq k_0 - 1$ we have
\begin{align*}
\sum_{|\umu| = k, |\ell(\umu)_{F}| = m} \tau(\sca{x_{\umu}, x_{\umu}})
& =
\sum_{|\umu| = m, \ell(\umu) \in F} \bigg[ \sum_{|\unu| = k - m, \ell(\unu) \perp F} \tau(\sca{x_{\unu},  \sca{x_{\umu}, x_{\umu}} x_{\unu}}) \bigg] \\
& =
e^{(k - m)\be} \sum_{|\umu| = m, \ell(\umu) \in F} \tau(\sca{x_{\umu}, x_{\umu}}) 
\leq
e^{k \be} M^m
\leq
e^{k \be} M^{k_0}.
\end{align*}
A similar computation as above (with $1+\eps$ in place of $M$) for $k \geq m \geq k_0$ yields
\begin{align*}
\sum_{|\umu| = k, |\ell(\umu)_{F}| = m} \tau(\sca{x_{\umu}, x_{\umu}})
\leq
e^{k \be} (1 + \eps)^{m}
\leq
e^{k \be} (1 + \eps)^{k}.
\end{align*}
Therefore for $k \geq k_0$ we get
\begin{align*}
\bigg[ \sum_{|\umu| = k} \tau(\sca{x_{\umu}, x_{\umu}}) \bigg]^{1/k}
& =
\bigg[\sum_{m=0}^{k_0-1} \sum_{|\umu| = k, |\ell(\umu)_F| = m} \tau(\sca{x_{\umu}, x_{\umu}}) +
\sum_{m=k_0}^{k} \sum_{|\umu| = k, |\ell(\umu)_F| = m } \tau(\sca{x_{\umu}, x_{\umu}}) \bigg]^{1/k} \\
& \leq
\big[ k_0 M^{k_0} + 2k(1 + \eps)^k \big]^{1/k} e^{\be}.
\end{align*}
The dominating sequence converges to $(1+ \eps) e^\be$.
As $\eps >0$ was arbitrary we obtain the required $h_X^\tau \leq \be$.
\end{proof}

\begin{theorem}\label{T:entropy}
Let $X$ be a product system of finite rank over $A$.
Then
\begin{align*}
h_X 
 = \max\{0, \inf \{h_X^{\tau} \mid \tau \in \Tr(A)\} \}.
\end{align*}
If in addition $h_X > 0$ then there exists a $\tau \in \Tr(A)$ such that $h_X = h_\tau$.
\end{theorem}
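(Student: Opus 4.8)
The plan is to prove the two inequalities $h_X \ge \max\{0, \inf_{\tau} h_X^{\tau}\}$ and $h_X \le \max\{0, \inf_{\tau} h_X^{\tau}\}$ and then to upgrade the former into the attainment statement by manufacturing an equilibrium state exactly at the critical value $\be = h_X$. I would work throughout with $A$ (hence $X$) unital, which is harmless by Proposition~\ref{P:unital}, and record that $h_X \ge 0$ holds by definition since the infimum runs over $\be > 0$. If $\Tr(A) = \mt$ then, by the extraction argument below, no equilibrium state exists at any $\be > 0$, so both sides equal $+\infty$ and there is nothing to prove; hence I may assume $\Tr(A) \ne \mt$, in which case Proposition~\ref{P:entropy1} already gives $\inf_{\tau} h_X^{\tau} \le \max_i \log d_i < \infty$.

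For the bound $h_X \ge \max\{0, \inf_{\tau} h_X^{\tau}\}$, fix any $\be > 0$ with $\Eq_{\be}(\N\T(X)) \ne \mt$ and pick $\vphi$ in it. By Theorem~\ref{T:con}, $\vphi$ is a convex combination of states in the $\Eq_{\be}^{F}(\N\T(X))$, so $\Eq_{\be}^{F_0}(\N\T(X)) \ne \mt$ for some $F_0 \subseteq \{1, \dots, N\}$, and then $\GEq_{\be}^{F_0}(\N\T(X)) \ne \mt$ by Proposition~\ref{P:gi}. Applying Theorem~\ref{T:para}(1) if $F_0 = \mt$ and Theorem~\ref{T:para}(2) if $F_0 \ne \mt$ produces a trace $\tau$ in $\Avt_{\be}(A)$ or in $\Tr_{\be}^{F_0}(A)$ respectively, and Proposition~\ref{P:entropy2} then gives $h_X^{\tau} \le \be$. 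Thus $\inf_{\tau} h_X^{\tau} \le \be$ for every such $\be$, and taking the infimum yields $\inf_{\tau} h_X^{\tau} \le h_X$; combined with $h_X \ge 0$ this is the asserted inequality.

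For the reverse bound, fix any $\tau \in \Tr(A)$ and any $\be > \max\{0, h_X^{\tau}\}$. Since $h_X^{\tau,\{1,\dots,N\}} = h_X^{\tau} < \be$, a root-test estimate (the Remark preceding Proposition~\ref{P:entropy1}) gives $c_{\tau,\be}^{\{1,\dots,N\}} < \infty$, i.e.\ $\tau \in \Tr_{\be}^{\fty}(A)$; moreover $\fI_{\mt} = \ker\{A \to \N\T(X)\} = (0)$ because $\pi$ is injective, so the side condition of Theorem~\ref{T:para}(2) for $F = \{1,\dots,N\}$ is vacuous and that theorem returns a state $\Phi_{\tau}^{\{1,\dots,N\}} \in \GEq_{\be}^{\fty}(\N\T(X)) \subseteq \Eq_{\be}(\N\T(X))$. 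Hence $h_X \le \be$; letting $\be \downarrow \max\{0, h_X^{\tau}\}$ and then taking the infimum over $\tau$ gives $h_X \le \inf_{\tau}\max\{0, h_X^{\tau}\} = \max\{0, \inf_{\tau} h_X^{\tau}\}$, which together with the previous paragraph establishes the equality.

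Finally, suppose $0 < h_X < \infty$, so the equality just proved forces $h_X^{\tau} \ge h_X > 0$ for all $\tau$, and it suffices to produce one $\tau$ with $h_X^{\tau} \le h_X$. Choosing $\be_n \downarrow h_X$ with $\Eq_{\be_n}(\N\T(X)) \ne \mt$, states $\vphi_n$ therein, and a weak*-convergent subnet $\vphi_n \to \vphi$ in the compact state space of $\N\T(X)$, one checks that $\vphi$ inherits the KMS condition at $\be = h_X$: the relation~(\ref{eq:kms}) for $\vphi_n$ has the form $\vphi_n(AB) = e^{-|\un{n} - \un{m}|\be_n}\vphi_n(BA)$ with $A, B$ monomials independent of $\be$, so letting $n \to \infty$ and using $\be_n \to h_X$ shows $\vphi$ satisfies~(\ref{eq:kms}) at $h_X$, hence $\vphi \in \Eq_{h_X}(\N\T(X))$ (it is a state, being a weak*-limit of states). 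Running the argument of the second paragraph on $\vphi$ at $\be = h_X$ then yields a $\tau \in \Tr_{h_X}^{F}(A)$ or $\Avt_{h_X}(A)$ with $h_X^{\tau} \le h_X$, so $h_X^{\tau} = h_X$. I expect this last step to be the crux: because $\tau \mapsto h_X^{\tau}$ is a $\limsup$ of weak*-continuous functions it need not be weak*-upper-semicontinuous, so the infimum over $\Tr(A)$ cannot be shown to be attained by a direct compactness argument there — one is forced to pass through an equilibrium state at the critical temperature itself (using compactness of the state space and the fact that the KMS condition is captured by the $\be$-continuous relations~(\ref{eq:kms}), cf.\ Proposition~\ref{P:char}) and only then extract the optimal trace via Theorem~\ref{T:para}. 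The two inequalities, by contrast, are straightforward consequences of Theorems~\ref{T:con} and~\ref{T:para} together with Proposition~\ref{P:entropy2}.
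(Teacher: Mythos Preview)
Your proposal is correct and follows essentially the same route as the paper's proof: both directions are obtained by combining Theorem~\ref{T:con}, Theorem~\ref{T:para}, and Proposition~\ref{P:entropy2} (the paper phrases the bound $h_X \le \max\{0,\inf_\tau h_X^\tau\}$ as a contradiction rather than your direct construction, but the content is identical), and the attainment is obtained in both cases by producing an equilibrium state at $\be = h_X$ via weak*-compactness and then extracting the trace. Your treatment is more detailed than the paper's on the weak*-limit argument and on the side condition $\fI_{\mt} = (0)$, but these are exactly the points the paper leaves implicit.
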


\begin{proof}
Let $\be > 0$ such that $\Eq_\be(\N\T(X)) \neq \mt$.
By Proposition \ref{P:gi} we moreover have that $\GEq_\be(\N\T(X)) \neq \mt$.
Then by Theorem \ref{T:con} and Theorem \ref{T:para} there exists a $\tau \in \Tr_\be^F(A)$ for some $F$.
Proposition \ref{P:entropy2} then implies that $h_X^{\tau} \leq \be$. 
Therefore 
\[
\inf \{h_X^{\tau} \mid \tau \in \Tr(A)\} \leq h_X.
\]

If $h_X = 0$ then there is nothing to show.
Suppose that $h_X > 0$ and assume that there exists a $\tau \in \Tr(A)$ with $0 \leq h_X^\tau < h_X$.
Then for a positive $\be \in (h_X^\tau, h_X)$, the root test implies that $c_{\tau, \be}^{\{1, \dots, N\}} <\infty$.
Hence $\tau \in \Tr_\be^{\fty}(A)$ and by Theorem \ref{T:para} it induces a $\Phi_\tau$ in $\Eq_{\be}^{\fty}(\N\T(X))$.
This is a contradiction as it should be that $\Eq_\be(\N\T(X)) = \mt$ by the choice of $\be$.
Hence $h_X$ is the infimum of the tracial entropies.

Furthermore, by weak*-continuity there exists an equilibrium state at $\be = h_X$ for $\N\T(X)$.
Assuming $h_X >0$, by Theorem \ref{T:con} and Theorem \ref{T:para} there is a $\tau \in \Tr_{h_X}^{F}(A)$ for some $F$, and Proposition \ref{P:entropy2} gives $h_X^{\tau} \leq h_X$.
However we also have $h_X \leq h_X^{\tau}$ by the first part, thus obtaining equality.
\end{proof}

We now turn our attention to ground states and KMS${}_\infty$-states.
First let us show that KMS${}_\infty$-states do exist and can only come as limits of equilibrium states of finite type.

\begin{proposition}\label{P:strong entropy}
Let $X$ be a product system of finite rank over $A$ and let $F \subseteq \{1, \dots, N\}$.
Then
\[
\Eq_\be^C(\N\T(X)) = \mt \foral C \subsetneq F \text{ and } \be > h_X^{s,F}.
\]
Consequently, we have that
\[
\Eq_\be(\N\T(X)) = \Eq_\be^{\fty}(\N\T(X)) \simeq \Tr(A) \foral \be > h_X^s.
\]
\end{proposition}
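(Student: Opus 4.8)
The plan is to prove first that $\Eq_\be^C(\N\T(X)) = \mt$ for all $C \subsetneq F$ when $\be > h_X^{s,F}$, and then derive the final statement by taking $F = \{1,\dots,N\}$ and invoking the parametrization of $\Tr_\be^{\fty}(A)$ by $\Tr(A)$ at high temperatures. For the first part, I would argue by contraposition: suppose $\Eq_\be^C(\N\T(X)) \neq \mt$ for some $C \subsetneq F$, pick $\vphi$ in it, and apply Proposition \ref{P:gi} and Theorem \ref{T:para} (via Proposition \ref{P:char}) to obtain a trace $\tau \in \Tr_\be^C(A)$. The key point is that since $C \subsetneq F$, there is at least one index $i \in F \setminus C$, so $i \notin C$ and the defining relation of $\Tr_\be^C(A)$ gives $e^{\be}\tau(a) = \sum_{j=1}^{d_i} \tau(\sca{x_{i,j}, a x_{i,j}})$ for that $i$. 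Iterating this relation along the single direction $\Bi$ yields $e^{k\be}\tau(a) = \sum_{|\mu_i| = k} \tau(\sca{x_{i,\mu_i}, a x_{i,\mu_i}})$ for all $k$, so that $e^{k\be} = \sum_{|\mu_i|=k}\tau(\sca{x_{i,\mu_i}, x_{i,\mu_i}})$. Since also $c_{\tau,\be}^C < \infty$ and $\Bi \in F$, the summand $\sum_{\ell(\umu) = k\cdot\Bi} e^{-k\be}\tau(\sca{x_{\umu}, x_{\umu}}) = 1$ appears infinitely often in $c_{\tau,\be}^C$, which is already the contradiction of Proposition \ref{P:cap}-type. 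To extract the entropy bound $\be \le h_X^{s,F}$ directly, note $e^{k\be} = \sum_{|\mu_i|=k}\tau(\sca{x_{i,\mu_i},x_{i,\mu_i}}) \le \|\sum_{|\mu_i|=k}\sca{x_{i,\mu_i},x_{i,\mu_i}}\|_A$, so $\be \le h_X^{x,\Bi} \le h_X^{x,F}$ for the fixed decomposition $x$; taking the infimum over unit decompositions $x$ gives $\be \le h_X^{s,F}$, contradicting $\be > h_X^{s,F}$.

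Next I would address the final statement. Applying the first part with $F = \{1,\dots,N\}$: for $\be > h_X^s = h_X^{s,\{1,\dots,N\}}$ we get $\Eq_\be^C(\N\T(X)) = \mt$ for every $C \subsetneq \{1,\dots,N\}$, which by Theorem \ref{T:con} forces every $\vphi \in \Eq_\be(\N\T(X))$ to equal its finite part, i.e. $\Eq_\be(\N\T(X)) = \Eq_\be^{\fty}(\N\T(X))$. It remains to identify this simplex with $\Tr(A)$. Here I would invoke Proposition \ref{P:gi}, which says that finite-type equilibrium states are automatically gauge-invariant, so $\Eq_\be^{\fty}(\N\T(X)) = \GEq_\be^{\fty}(\N\T(X))$. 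By Theorem \ref{T:para}(2) with $F = \{1,\dots,N\}$, this is in bijection with $\{\tau \in \Tr_\be^{\fty}(A) \mid \tau|_{\fI_{\mt}} = 0\}$; since $\fI_{\mt} = \ker\{A \to \N\T(X)\} = (0)$, the annihilation condition is vacuous and the target is exactly $\Tr_\be^{\fty}(A)$. Finally, for $\be > h_X^s$ Proposition \ref{P:entropy1} gives $h_X^\tau \le h_X^s < \be$ for every $\tau \in \Tr(A)$, so by the root test $c_{\tau,\be}^{\{1,\dots,N\}} < \infty$ for every trace; hence $\Tr_\be^{\fty}(A) = \Tr(A)$, and the composite bijection $\Eq_\be(\N\T(X)) \simeq \Tr(A)$ follows.

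The main obstacle, I expect, is ensuring the chain of identifications is tight: one must be careful that the condition "$\Eq_\be^C = \mt$ for all proper $C$" genuinely collapses the convex decomposition of Theorem \ref{T:con} onto the $\fty$-part (this is immediate since the non-trivial parts are indexed by $F$ with $\vphi(Q_F) \neq 0$, but one should note $\Eq_\be^{\infty}$ is among the excluded $C = \mt$ case as well, so no infinite part survives either — this needs the $C = \mt$ instance of the first part, which holds since $\Eq_\be^{\mt} = \Eq_\be^{\infty}$ and the same argument applies with $i$ ranging over all of $\{1,\dots,N\}$). The rest is bookkeeping with the definitions of $\Tr_\be^{\fty}(A)$, the triviality of $\fI_{\mt}$, and the root-test estimate $c_{\tau,\be}^{\{1,\dots,N\}} < \infty$, all of which are routine given Propositions \ref{P:entropy1} and \ref{P:entropy2} and Theorem \ref{T:para}.
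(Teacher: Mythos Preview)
Your argument is correct apart from one mistaken aside. The sentence claiming that the summand $\sum_{\ell(\umu)=k\cdot\Bi} e^{-k\be}\tau(\sca{x_{\umu},x_{\umu}}) = 1$ ``appears infinitely often in $c_{\tau,\be}^C$'' is wrong: you chose $i \in F \setminus C$, so $i \notin C$ and hence $k\cdot\Bi \notin C$ for $k \ge 1$; these terms are \emph{not} part of $c_{\tau,\be}^C = \sum\{e^{-|\umu|\be}\tau(\sca{x_{\umu},x_{\umu}}) \mid \ell(\umu) \in C\}$. (In Proposition~\ref{P:cap} the analogous step works because there the index $i$ lies in the set $F'$ over which the finite $c$-sum is taken.) This is harmless, since your subsequent direct argument via $e^{k\be} \le \|\sum_{|\mu_i|=k}\sca{x_{i,\mu_i},x_{i,\mu_i}}\|_A$ and $h_X^{x,\{i\}} \le h_X^{x,F}$ is correct; just delete the offending sentence.

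Your route differs from the paper's only in that you pass through Theorem~\ref{T:para} to obtain $\tau \in \Tr_\be^C(A)$ and read off the averaging relation from its definition, whereas the paper stays with $\vphi$: it shows $\vphi(P_{\un{n}}) = 1$ for $\un{n}$ supported in $F\setminus C$ (using $\vphi(Q_{\bo{j}}) = 0$ for $j\notin C$ and Cauchy--Schwarz) and then applies the KMS condition to extract $e^{|\un{n}|\be} = \sum_{\ell(\umu)=\un{n}}\vphi\pi(\sca{x_{\umu},x_{\umu}})$. Both yield $\be \le h_X^{s,F}$; the paper's version is self-contained while yours leans on the parametrization. Your second half --- collapsing to $\Eq_\be^{\fty}$ via Theorem~\ref{T:con}, invoking gauge-invariance of finite-type states, identifying $\fI_\mt = (0)$, and the root-test verification that $\Tr_\be^{\fty}(A) = \Tr(A)$ for $\be > h_X^s$ --- matches the paper.
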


\begin{proof}
First consider the case for $C = \mt$ and let $\vphi \in \Eq_\be^{\infty}(\N\T(X))$.
By Proposition \ref{P:gi} we can substitute $\vphi$ with a gauge-invariant one.
Hence without loss of generality assume that $\vphi = \vphi E$ and let $\tau \in \Avt_\be(A)$ such that $\vphi = \Phi^{\infty}(\tau) \in \Eq_\be^{\infty}(\N\T(X))$.
Then for any $F \subseteq \{1, \dots, N\}$ and a unit decomposition $x = \{x_{i,j} \mid j=1, \dots, d_i, i=1, \dots, N\}$ we have
\[
\be = \lim\sup_k \frac{1}{k} \sum_{|\umu| = k, \ell(\umu) \in F} \tau(\sca{x_{\umu}, x_{\umu}}) \leq \limsup_k \frac{1}{k} \| \sum_{|\umu| = k, \ell(\umu) \in F} \sca{x_{\umu}, x_{\umu}} \|_{A}
=
h_X^{x, F}.
\]
Taking infimum over $x$ shows that $\be \leq h_X^{s,F}$.

Now, fix $\mt \neq C \subsetneq F$ and let $\vphi \in \Eq_\be^C(\N\T(X))$.
Once more by Proposition \ref{P:gi} we may take $\vphi$ to be gauge-invariant.
For $\un{n} \in F \setminus C$ take the projection
\begin{align*}
P_{\un{n}} 
& : =
\sum \{p_{\un{m}} \mid \un{m} \geq \un{n}\}
=
\prod_{i \in  \supp \un{n}} P_{n_i \cdot \Bi} \\
& =
\prod_{i \in \supp \un{n}} (1 - (1 - P_{n_i \cdot \Bi}))
=
\sum_{D \subseteq \supp \un{n}} (-1)^{|D|} \prod_{j \in D} (1 - P_{n_j \cdot \bo{j}}).
\end{align*}
However we have that
\[
1 - P_{n_j \cdot \bo{j}} = \sum_{k=0}^{n_j - 1} \sum_{\ell(\umu) = k \cdot \bo{j}} t(x_{\umu}) Q_{\bo{j}} t(x_{\umu})^*.
\]
Since $j \in \supp \un{n} \perp C$ then $\vphi(Q_{\bo{j}}) = 0$ and thus $\vphi(1 - P_{n_j \cdot \bo{j}}) = 0$.
The Cauchy-Schwartz inequality then yields
\[
\vphi(\prod_{j \in D} (1 - P_{n_j \cdot \bo{j}}))
=
\begin{cases}
1 & \text{ if } D = \mt, \\
0 & \text{ if } \mt \neq D \subseteq \supp \un{n},
\end{cases}
\]
and thus $\vphi(P_{\un{n}}) = 1$.
However $P_{\un{n}} = \sum_{\ell(\umu) = \un{n}} t(x_{\umu}) t(x_{\umu})^*$ and therefore for $|\un{n}| = k$ we get
\begin{align*}
1
&
= \vphi(P_{\un{n}})
=
e^{-k \be} \sum_{\ell(\umu) = \un{n}} \vphi \pi(\sca{x_{\umu}, x_{\umu}}) \\
& \leq
e^{-k\be} \sum_{|\umu| = k, \ell(\umu) \in F} \vphi \pi (\sca{x_{\umu}, x_{\umu}}) 
 \leq
e^{-k \be} \| \sum_{|\umu| = k, \ell(\umu) \in F} \sca{x_{\umu}, x_{\umu}} \|_{A},
\end{align*}
where we used that $\vphi \pi \in \Tr(A)$.
From the latter it follows that $\be \leq h_X^{x, F}$ and consequently that $\be \leq h_X^{s,F}$.

In particular we have $\Eq_\be(\N\T(X)) = \Eq_\be^{\fty}(\N\T(X))$ for all $\be > h_X^s$.
It remains to show that $\Tr(A) = \Tr_\be^{\{1, \dots, N\}}(A)$ when $\be > h_X^s$.
Let $x = \{x_{i,j} \mid j=1, \dots, d_i, i=1, \dots, N\}$ be a unit decomposition of $X$ such that $\be > h_X^x \geq h_X^s$.
For every $\tau \in \Tr(A)$ we have $h_X^\tau \leq h_X^s \leq h_X^x$ and thus
\[
\limsup_k \bigg[ e^{-k\be} \sum_{|\umu| = k} \tau(\sca{x_{\umu}, x_{\umu}}) \bigg]^{1/k}
\leq
e^{- \be} e^{h_X^x} < 1.
\]
Therefore $c_{\tau, \be}^{\{1, \dots, N\}} < \infty$, and the proof is complete.
\end{proof}

Now we can provide the characterization of the limit states.

\begin{theorem}\label{T:ground}
Let $X$ be a product system of finite rank over $A$.
Then there exists an affine weak*-homeomoprhism $\Psi$ between the states $\tau \in \S(A)$ and the ground states of $\N\T(X)$ such that
\begin{equation*} 
\Psi_\tau(\pi(a)) = \tau(a) \foral a \in A
\qand
\Psi_\tau(t(\xi_{\un{n}}) t(\eta_{\un{m}})^*) = 0 \text{ when } \un{n} + \un{m} \neq \un{0}.
\end{equation*}
The restriction of $\Psi$ to the tracial states $\Tr(A)$ induces a weak*-homeomorphism onto the KMS${}_\infty$-states of $\N\T(X)$.

If $\{I_F \mid \mt \neq F \subseteq \{1, \dots, N\}\}$ is a lattice of $\perp$-invariant ideals of $A$ then the corresponding weak*-homeomorphisms for $\N\O(I, X)$ arise by restricting on states that annihilate the ideal $I_{\{1, \dots, N\}}$.
\end{theorem}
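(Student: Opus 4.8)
By Proposition~\ref{P:unital} I may assume $X$ is unital, so that $\pi$ is unital and precomposition with $\pi$ sends states of $\N\T(X)$ to states of $A$. The whole argument rests on one elementary characterisation, which I would prove first: \emph{a state $\vphi$ of $\N\T(X)$ is a ground state if and only if $\vphi(t(\xi_{\un n}) t(\eta_{\un m})^*) = 0$ whenever $\un n + \un m \neq \un 0$.} For the forward direction I would plug the analytic pair $f = t(\xi_{\un n})$, $g = t(\eta_{\un m})^*$ with $\un m \neq \un 0$ into the defining boundedness condition: since $\si_z(g) = e^{-i|\un m| z} g$, evaluating along $z = it$ and letting $t \to +\infty$ forces $\vphi(t(\xi_{\un n}) t(\eta_{\un m})^*) = 0$, and taking adjoints together with positivity of $\vphi$ disposes of the case $\un n \neq \un 0$. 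For the converse I would invoke Fowler's relation (\ref{eq:Fow}): a product of two monomials lies in a \emph{single} spectral subspace $\ol{t(X_{\un p}) t(X_{\un q})^*}$, on which $\vphi$ vanishes unless $\un p = \un q = \un 0$; in that surviving case the relevant exponential has exponent $i|\un p - \un q| z = 0$, so $z \mapsto \vphi(f \si_z(g))$ is bounded on the upper half plane.

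\textbf{Construction of $\Psi$.} Given $\tau \in \S(A)$ with GNS data $(H_\tau, \rho_\tau, x_\tau)$, I would represent $\N\T(X)$ on $\F X \otimes_{\rho_\tau} H_\tau$ via the induced representation $(\pi \otimes I, t \otimes I)$ and let $\Psi_\tau$ be the vector state at the unit vector $1_A \otimes x_\tau \in X_{\un 0} \otimes_{\rho_\tau} H_\tau$. A short computation gives $t(\eta_{\un m})^*(1_A \otimes x_\tau) = 0$ for $\un m \neq \un 0$ and $t(\xi_{\un n})(1_A \otimes x_\tau) \perp X_{\un 0} \otimes H_\tau$ for $\un n \neq \un 0$, whence $\Psi_\tau(t(\xi_{\un n}) t(\eta_{\un m})^*) = 0$ for $\un n + \un m \neq \un 0$ and $\Psi_\tau \pi = \tau$; by the key lemma $\Psi_\tau$ is a ground state. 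Injectivity of $\tau \mapsto \Psi_\tau$ is immediate; surjectivity holds because an arbitrary ground state $\vphi$ and $\Psi_{\vphi\pi}$ agree on $\pi(A)$ and both annihilate every monomial of nonzero net degree, so they coincide on the dense span of monomials. Affinity is visible on that dense span, weak*-continuity of $\Psi$ follows from the same formulas plus the uniform bound $\|\Psi_\tau\| = 1$, and the inverse is the weak*-continuous restriction $\vphi \mapsto \vphi\pi$; hence $\Psi$ is an affine weak*-homeomorphism onto the (weak*-closed) set of ground states.

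\textbf{KMS${}_\infty$-states.} For ``$\Leftarrow$'', given $\tau \in \Tr(A)$, Proposition~\ref{P:strong entropy} puts $\tau \in \Tr_\be^{\fty}(A)$ for every $\be > h_X^s$, so Theorem~\ref{T:para} produces the finite-type equilibrium state $\Phi_\tau$ (case $F = \{1, \dots, N\}$); from its explicit formula one reads off $c_{\tau,\be}^{\fty} \to 1$ and $\Phi_\tau(\pi(a)) \to \tau(a)$ as $\be \to \infty$ (the multi-word with $\ell(\umu) = \un 0$ dominates), while $\Phi_\tau(t(\xi_{\un n}) t(\eta_{\un m})^*)$ either vanishes outright or carries a factor $e^{-|\un n|\be} \to 0$ once $\un n + \un m \neq \un 0$, so $\Phi_\tau \to \Psi_\tau$ weak* and $\Psi_\tau$ is a KMS${}_\infty$-state. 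For ``$\Rightarrow$'', write a KMS${}_\infty$-state $\vphi$ as a weak* limit of $(\si,\be_i)$-KMS states $\vphi_i$ with $\be_i \uparrow \infty$; a three-line (Phragm\'en--Lindel\"of) estimate bounds $|\vphi_i(f\si_z(g))|$ by $\|f\|\|g\|$ on $0 \le \im z \le \be_i$, so the limit $\vphi$ is a ground state, and $\vphi_i\pi \in \Tr(A)$ (because $\si$ fixes $\pi(A)$) together with weak*-closedness of $\Tr(A)$ gives $\vphi\pi \in \Tr(A)$ and $\vphi = \Psi_{\vphi\pi}$. Thus $\Psi$ restricts to a continuous bijection of the compact set $\Tr(A)$ onto the KMS${}_\infty$-states, hence a weak*-homeomorphism.

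\textbf{Relative algebras, and the main obstacle.} Since $\N\O(I, X) = \N\T(X)/\K_I$ with $\K_I$ closed and gauge-equivariant, its ground states (resp.\ KMS${}_\infty$-states) are exactly those of $\N\T(X)$ that annihilate $\K_I$, and the states $\Phi_\tau$ above descend to $\N\O(I,X)$ once $\tau|_{I_{\{1, \dots, N\}}} = 0$ by Theorem~\ref{T:para 2}. Using $\K_I = \ol{\spn}\{t(X_{\un n}) \pi(a) Q_F t(X_{\un m})^* \mid a \in I_F,\ \mt \neq F\}$ together with the ground-state vanishing, the condition $\Psi_\tau|_{\K_I} = 0$ collapses to $\tau(b_1 a b_2^*) = 0$ for all $a \in I_F$, $b_1, b_2 \in A$, $\mt \neq F$, i.e.\ to $\tau|_{I_F} = 0$ for every $F$, which by $I_F \subseteq I_{\{1, \dots, N\}}$ is precisely $\tau|_{I_{\{1, \dots, N\}}} = 0$. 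I expect the one step demanding genuine care to be the converse half of the key lemma --- the Fowler-relation bookkeeping confirming that $\vphi$ really kills every product of monomials of nonzero net degree --- together with, to a lesser extent, the concrete identification of $\Psi_\tau$ on monomials; everything downstream is soft.
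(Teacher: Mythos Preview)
Your approach coincides with the paper's: the construction of $\Psi_\tau$ as the vector state at $1_A \otimes x_\tau$ in the induced Fock representation is identical, and the paper simply defers surjectivity and the KMS${}_\infty$ statement to \cite[Theorem~9.1]{Kak17} (invoking Proposition~\ref{P:strong entropy} to guarantee approximating finite-type states), which is precisely what you unpack. The relative-algebra reduction is also the same; the paper writes it as $\Psi_\tau(\pi(a)Q_F) = \tau(Q\pi(a)Q_F Q) = \tau(Q\pi(a)Q) = \tau(a)$.

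One correction in the step you yourself flagged as delicate. In the converse of the key lemma, the exponential governing $z \mapsto \vphi(f\si_z(g))$ is $e^{i|\un k - \un w|z}$, coming from $\si_z(g)$, not $e^{i|\un p - \un q|z}$. In the surviving case $\un p = \un q = \un 0$ your Fowler bookkeeping forces $\un n = \un w = \un 0$ and $\un k = \un m$, so the exponent is $i|\un k|z$ with $|\un k| \geq 0$; boundedness on the upper half-plane then follows from $|e^{i|\un k|z}| = e^{-|\un k|\im z} \leq 1$, not from the exponent vanishing. With that fix the argument is complete.
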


\begin{proof}
For $\N\T(X)$ let $\tau \in \S(A)$ and consider its GNS-representations $(x_\tau, H_\tau, \rho_\tau)$.
Let $(\rho, v) = (\pi \otimes I, t \otimes I)$ be the induced representation on $H = \F X \otimes_{\rho_\tau} H_{\tau}$ and define the vector state
\[
\Psi_\tau(f) := \sca{1_A \otimes x_\tau, (\rho \times v)(f) 1_A \otimes x_\tau}_{H} = \tau(Q f Q)
\qfor
Q := \prod_{i=1}^N Q_{\Bi}.
\]
This map is clearly continuous and injective.
Surjectivity and restriction to KMS${}_\infty$-states follows verbatim from \cite[Proof of Theorem 9.1]{Kak17}.
Here we need to use Proposition \ref{P:strong entropy} so that every KMS${}_\infty$-state is a limit of finite-type states.

For $\N\O(I, X)$ we notice that
\[
\Psi_\tau(\pi(a) Q_F) = \tau(Q \pi(a) Q_F Q) = \tau(Q \pi(a) Q) = \tau(a).
\]
Therefore $\Psi_\tau$ factors through $q_{\K_I} \colon \N\T(X) \to \N\O(I, X)$ if and only if $\tau|_{I_{F}}$ for all $F$, if and only if $\tau|_{I_{\{1, \dots, N\}}}$, since $I_F \subseteq I_{\{1, \dots, N\}}$ for all $F$.
\end{proof}

\section{Examples}\label{S:examples}

\subsection{$\bZ_+^N$-dynamics}

When every $X_i$ admits an orthonormal basis then it is easy to deduce that $h_X = h_X^\tau = h_X^s = \max_i \log d_i$.
A moment's thought also indicates that if $\Tr_\be^F(A) \neq \mt$ then $\be = \log d_i$ for all $i \notin F$.
Hence the only possible points of breaking symmetry is where some of the $d_i$'s coincide.

An example in this respect is studied in \cite{Kak17} for $\bZ_+^N$-dynamical systems in the sense of \cite{DFK14}.
In this case let $\al \colon \bZ_+^N \to \End(A)$ be an action by unital endomorphisms on a C*-algebra $A$.
The associated product system is given by
\[
X_{\un{n}} = A_A
\text{ with }
a \cdot \xi_{\un{n}} = \al_{\un{n}}(a) \xi_{\un{n}}
\]
where the identification $\xi_{\un{n}} \otimes \xi_{\un{m}} \to \xi_{\un{n}} \xi_{\un{m}}$ is given by multiplication in $A$.
Then $\N\T(X)$ is the universal C*-algebra with respect to $(\pi, V_1, \dots, V_N)$ such that the $V_i$ are doubly commuting isometries and $\pi(a) V_{\Bi} = V_{\Bi} \pi\al_{\Bi}(a)$.
By \cite{DFK14} we have in particular that $\N\O(X)$ is the quotient by the relations
\[
\pi(a) \prod_{i \in F} (I - V_\Bi V_{\Bi}^*) \foral a \in \I_F = \bigcap_{\un{n} \perp F} \al_{\un{n}}^{-1}( (\bigcap_{i \in F} \ker \al_{\Bi})^\perp ).
\]

In this case $d_i = 1$ for all $i$ and all equilibrium states are gauge-invariant.
Therefore $\Eq_\be(\N\T(X)) = \Eq_\be^{\fty}(\N\T(X))$ for all $\be > 0$.
The equilibrium states of infinite type correspond to tracial states of the universal C*-algebra subject to the relations above for (doubly) commuting unitaries $U_\Bi$.
We may intrinsically identify $\N\O(A, X)$ with the C*-crossed product of the system given by $\ad_{U_{\Bi}} \in \Aut(q\pi(A))$ where $q \colon \N\T(X) \to \N\O(A, X)$.

\subsection{Higher-rank graphs}

Let $X$ arise from an irreducible higher-rank graph $\La$, so that the single-coloured subgraphs are irreducible.
We will not repeat the construction of higher-rank graphs here and the reader may refer to \cite{RSY04} for the details.
By \cite[Theorem 8.2]{Kak17} and Proposition \ref{P:entropy1} we have that
\[
\log \la = h_X^{\tau, i} \leq h_X^\tau \leq h_X^s = \max_i h_X^{s,i} = \log \la,
\]
where $\la$ is the common Perron-Frobenius eigenvalue of the commuting adjacency matrices.
Thus $h_X = \log \la$.
Moreover this shows that all states above $\log \la$ are of finite type and thus gauge-invariant.
Now at $\be = \log \la$ there exists a unique infinite-type state arising from the common Perron-Frobenius eigenvector $w_{\textup{PF}}$.
The proof is similar to that of \cite[Proof of Theorem 8.2]{Kak17}.
Hence the $\Eq$-structure for $\N\T(X)$ is
\[
\Eq_\be(\N\T(X))
=
\begin{cases}
\Eq_\be^{\fty}(\N\T(X)) & \text{ if } \be > \log \la, \\
\Eq_\be^{\infty}(\N\T(X)) & \text{ if } \be = \log \la,
\end{cases}
\simeq
\begin{cases}
\P_n & \text{ if } \be > \log \la, \\
\{w_{\textup{PF}}\} & \text{ if } \be = \log \la,
\end{cases}
\]
where $\P_n$ is the probability simplex on the $n$ number of vertices (with dimension $n-1$).
In this way we fully recover the results of \cite{HLRS14}.

\subsection{Multivariable factorial languages}

In \cite{DK18} we consider product systems that arise from multivariable factorial languages (m-FL).
Here we show that the notion of entropy corresponds to that of allowable words.
But first let us recall some elements of the construction from \cite{DK18}.

Fix $d$ symbols and $N$ colours.
The elements of $\fdn$ are denoted by $\umu$ and consist of $N$-tuples of sequences on $d$ elements.
We write $\de_i(k)$ for the element that has the symbol $k$ at the $i$-th position and the empty word in all other places.
We use the operation of co-ordinatewise concatenation.
Then $((\bF^d)^N, \fdn)$ becomes a quasi-lattice by using the partial order coordinate-wise.

A set $\La^* \subseteq \fdn$ is said to be a \emph{multivariable factorial language} if 
\begin{enumerate}
\item for every $i \in [N]$ there exists at least one $k \in [d]$ such that $\de_i(k) \in \La^*$;
\item if $\umu \in \La^*$ and $\unu \in \umu$ then $\unu \in \La^*$.
\end{enumerate}
As with the one variable case, there is here an alternative definition via forbidden words.

We can use the quantization on $\ell^2(\La^*)$ given by
\[
T_{\de_i(k)} e_{\umu}
=
\begin{cases}
e_{\de_i(k) \ast \umu} & \text{ if } \de_i(k) \ast \umu \in \La^*, \\
0 & \text{ otherwise}.
\end{cases}
\]
The product system related to $\La^*$ is constructed concretely and is given by
\[
X_{\un{n}} = \spn\{T_{\umu} a \mid a \in A, \ell(\umu) = \un{n} \}
\qfor
A = \ca(T_{\umu}^* T_{\umu} \mid \umu \in \La^*).
\]
In particular we get an anti-homomorphism $\al \colon \La^* \to \End(A)$ given by $\al_{\de_i(k)} = \ad_{T_{\de_i(k)}^*}$.
Then $\N\T(X)$ is the universal C*-algebra with respect to $(\pi,V)$ such that:
\begin{enumerate}
\item $V \colon \La^* \to \B(H)$ is a Nica-covariant representation for $\La^*$, in the sense that
\[
V_{\umu} V_{\umu}^* V_{\unu} V_{\unu}^*
=
\begin{cases}
V_{\umu \vee \unu} V_{\umu \vee \unu}^* & \text{ if } \umu \vee \unu \in \La^*, \\
0 & \text{ otherwise},
\end{cases}
\]
\item $\pi(a) V_{\de_i(k)} = V_{\de_i(k)} \pi \al_{\de_i(k)}(a)$ for all $(i, k) \in [N] \times [d]$ and $a \in A$,
\item $V_{\de_i(k)}^* V_{\de_i(l)} = \de_{k, l} \pi(T_{\de_i(k)}^* T_{\de_i(k)})$ for all $i \in [N]$ and $k, l \in [d]$.
\end{enumerate}

We wish to show that all equilibrium states in this case are gauge-invariant.
To this end let $\vphi \in \Eq_\be(\N\T(X)$ and fix $a\in A$ and $\umu, \unu \in \La^*$.
Without loss of generality we assume that $\ell(\umu) \perp \ell(\unu)$.
If $\umu \vee \unu = \infty$ then
\begin{align*}
\vphi(V_{\umu} \pi(a) V_{\unu}^*)
=
e^{-|\umu| \be} \vphi(V_{\unu}^* V_{\umu} \pi(a))
= 0.
\end{align*}
If $\umu \vee \unu < \infty$ and is in $\La^*$ then we have
\begin{align*}
\vphi(V_{\umu} \pi(a) V_{\unu}^*)
& =
e^{-|\umu| \be} \vphi(V_{\unu}^* V_{\umu} \pi(a))
=
e^{-|\umu| \be} \vphi(V_{\umu} V_{\unu}^* \pi( T_{\umu \vee \unu}^* T_{\umu \vee \unu} a)) \\
& =
e^{-2 |\umu| \be} \vphi(V_{\unu}^* V_{\umu} \pi( T_{\umu}^* T_{\umu \vee \unu}^* T_{\umu \vee \unu} a) T_{\umu}).
\end{align*}
Applying repeatedly we thus get that $\vphi(V_{\umu} \pi(a) V_{\unu}^*) \leq e^{-n |\umu| \be} \nor{a}$ for all $n \in \bZ_+$.
This shows that $\vphi(V_{\umu} \pi(a) V_{\unu}^*) = 0$.
A similar argument applies when $\ell(\umu) \wedge \ell(\unu) \neq \un{0}$.

It is clear that $X_{\La^*}$ is of finite rank with the decomposition given by $\{T_{\de_i(k)} \mid k=1, \dots, d\}$ for every $i=1, \dots, N$.
Fix now $F \subseteq \{1, \dots, N\}$ and let the $F$-projections of the allowable words by
\[
c_F(\umu) := \ast_{i \in F} \mu_i.
\]
By definition of $\La^*$ we have that $c_F(\La^*) \subseteq \La^*$ as $c_F(\umu) \in \umu$.
Notice that the $T_{\umu}^* T_{\umu}$ are pairwise orthogonal and therefore the $F$-strong entropy is given by
\[
h_X^{s, F} = \lim_k \frac{1}{k} \log\left( |B_k^F(\La^*)| \right)
\; \text{ for } \;
B_{k}^F(\La^*):= \{\umu \in \La^* \mid c_F(\umu) = \umu, |\umu| = k\}.
\]
Hence $h_X^{s, F}$ measures the entropy of the allowable part supported on $F$.
Likewise the strong entropy $h_X^s$ coincides with the entropy of the allowable words.

\begin{acknow}
The author would like to thank Sergey Neshveyev for pointing out an error in a preprint of this paper.

The author would like to dedicate this paper to Aimilia, on the occasion of her \emph{first} xx-th birthday.
Many happy returns.
\end{acknow}


\end{document}